\newcolumntype{L}[1]{>{\raggedright\let\newline\\\arraybackslash\hspace{0pt}}m{#1}}
\newcolumntype{C}[1]{>{\centering\let\newline\\\arraybackslash\hspace{0pt}}m{#1}}
\newcolumntype{R}[1]{>{\raggedleft\let\newline\\\arraybackslash\hspace{0pt}}m{#1}}
\pgfqpoint{\LineSpace}{\LineSpace}}%
\pgfqpoint{\LineSpace}{\LineSpace}}%
\pgfqpoint{\LineSpace}{\LineSpace}}%
\pgfqpoint{\LineSpace}{\LineSpace}}%
\newdimen\LineSpace
\newdimen\PointSize
\newdimen\LineWidth
\tikzset{
line space/.code={\LineSpace=#1},
line space=3pt
}
\tikzset{
point size/.code={\PointSize=#1},
point size=.5pt
}
\tikzset{
pattern line width/.code={\LineWidth=#1},
pattern line width=.4pt
}
\newtheoremstyle{theoremstyle}
  {10pt}      
  {5pt}       
  {\itshape}  
  {}          
  {\bfseries} 
  {}         
  {\newline}      
  {}          
\newtheoremstyle{examplestyle}
  {10pt}      
  {5pt}       
  {}          
  {}          
  {\bfseries} 
  {}         
  {\newline}      
  {}          
\theoremstyle{theoremstyle}
\newtheorem{theorem}{Theorem}[section]
\newtheorem{lemma}[theorem]{Lemma}
\newtheorem{proposition}[theorem]{Proposition}
\newtheorem{corollary}[theorem]{Corollary}
 \theoremstyle{examplestyle}
 \newtheorem{example}[theorem]{Example}
 \newtheorem{definition}[theorem]{Definition}
 \newtheorem{remark}[theorem]{Remark}
 \newtheorem{algorithm}[theorem]{Algorithm}
\newcommand{\NN }{\mathbb{N}}
\newcommand{\RR }{\mathbb{R}}
\newcommand{\QQ }{\mathbb{Q}}
\newcommand{\ZZ }{\mathbb{Z}}
\newcommand{\ux}{{\mathbf{x}}}
\newcommand{\ov}{\overline}
\newcommand{\Rt}{{R\llbracket t\rrbracket}}
\newcommand{\Rtx}{{R\llbracket t\rrbracket [\ux]}}
\newcommand{\Rtxp}{{R[t,\ux]}}
\newcommand{\suchthat}{\;\ifnum\currentgrouptype=16 \middle\fi|\;}
\newcommand{\bigmid}{\left.\vphantom{\Big\{} \suchthat \vphantom{\Big\}}\right.}
\DeclareMathOperator{\syz}{syz}
\DeclareMathOperator{\Mon}{Mon}
\DeclareMathOperator{\lm}{LM}
\DeclareMathOperator{\lc}{LC}
\DeclareMathOperator{\lt}{LT}
\DeclareMathOperator{\tail}{tail}
\DeclareMathOperator{\HDDwR}{HDDwR}
\DeclareMathOperator{\initial}{in}
\DeclareMathOperator{\Witness}{Witness}
\DeclareMathOperator{\Lift}{Lift}
\DeclareMathOperator{\Flip}{Flip}
\DeclareMathOperator{\Mat}{Mat}
\DeclareMathOperator{\rec}{rec}
\newcommand{\lang}[1]{#1}
\newcommand{\kurz}[1]{}
\begin{document}

   \parindent0cm
   \parskip1ex

   \title[Gröbner Fans]{Gröbner Fans of $x$-homogeneous Ideals in $\Rtx$}
   \author{Thomas Markwig}
   \address{Technische Universit\"at Kaiserslautern\\
     Fachbereich Mathematik\\
     Erwin--Schr\"odinger--Stra\ss e\\
     D --- 67663 Kaiserslautern
     }
   \email{keilen@mathematik.uni-kl.de}
   \urladdr{http://www.mathematik.uni-kl.de/\textasciitilde keilen}
   \author{Yue Ren}
   \address{Technische Universit\"at Kaiserslautern\\
     Fachbereich Mathematik\\
     Erwin--Schr\"odinger--Stra\ss e\\
     D --- 67663 Kaiserslautern
     }
   \email{ren@mathematik.uni-kl.de}
   \urladdr{http://www.mathematik.uni-kl.de/\textasciitilde ren}
   \thanks{The first and the second author were supported by the German Israeli Foundation
     grant no 1174-197.6/2011 and by the German Research Foundation
     (Deutsche Forschungsgemeinschaft (DFG)) trough the Priority
     Programme 1489.}

   \subjclass{Primary 13P10, 13F25, 16W60; Secondary 12J25, 16W60}

   \date{December, 2015}

   \keywords{  Gr\"obner fan, standard basis over base rings, local standard
     fan, tropical variety.}
     
   \begin{abstract}
     We generalise the notion of Gr\"obner fan to ideals
     in $R\llbracket t\rrbracket[x_1,\ldots,x_n]$ for certain classes of
     coefficient rings $R$ and give a constructive
     proof that the Gr\"obner fan is a rational polyhedral fan. For this we
     introduce the notion of initially reduced standard bases and show
     how these can be computed in finite time. We deduce algorithms for
     computing the Gr\"obner fan, implemented in the computer
     algebra system \textsc{Singular}. The problem is motivated by the
     wish to compute tropical varieties over the $p$-adic numbers, which
     are the intersection of a subfan of a Gr\"obner fan as studied in this
     paper by some affine hyperplane, as shown in a forthcoming paper.
   \end{abstract}

   \maketitle

   \section{Introduction}

   Gr\"obner fans of ideals $I$ in the polynomial ring over a field
   were first introduced and studied by Mora and Robbiano
   in \cite{MR88} as an invariant associated to the ideal. The Gr\"obner
   fan of $I$ is a convex rational polyhedral fan classifying all
   possible leading ideals of $I$ w.r.t.~arbitrary global monomial
   orderings and encoding
   the impact of all these orderings on the ideal. It provides an
   interesting link between commutative algebra and convex geometry,
   opening the rich tool box of the latter for the first. Moreover,
   tropical varieties, which have gained lots of interest recently,
   can be described often as subcomplexes of certain Gr\"obner fans and
   can be computed that way. The latter is the main motivation for our
   paper, as we explain further down.

   Mora and Robbiano
   describe in their paper an algorithm to compute the Gr\"obner
   fan. The underlying structure was then used efficiently by Collart,
   Kalkbrenner and Mall in \cite{CKM97} to transform a standard basis
   w.r.t.~one global monomial ordering into a standard basis
   w.r.t.~another
   one by passing through several cones of the Gr\"obner fan. At a
   common facet of two cones a local change of the standard basis was necessary making
   use of the fact that the monomial orderings of the neighbouring
   cones can be seen as a refinement of a common partial ordering on
   the monomials.  Their
   methods were later refined by many others (see
   e.g.~\cite{AGK97,Tra00,Aue05,FJLT07,SS08}).

   For homogeneous ideals the Gr\"obner fan is complete
   and Sturmfels showed in \cite{Stu96} that it is the normal
   fan of a polytope, the state polytope of $I$. If the ideal is not
   homogeneous the Gr\"obner fan is in general neither complete, nor is the part in
   the positive orthant the normal fan of a polyhedron, as was shown
   by Jensen in \cite{Jen07a}.

   Since the notion of the Gr\"obner fan turned out to be so powerful
   in the polynomial ring it was in the sequel generalised to further
   classes of rings.
   Assi, Castro-Jim\'enez and Granger (see \cite{ACJG00}) and Saito,
   Sturmfels and Taka\-yama (see \cite{SST00}) studied an analogue of
   the Gr\"obner fan for ideals in the ring of algebraic differential
   operators. In a subsequent paper the first three authors
   generalised the notion to the ring of analytic differential
   operators (see \cite{ACJG01}), proving that the equivalence classes
   of weight vectors  yet again are convex rational polyhedral
   cones. Bahloul and Takayama (see \cite{BT06,BT07}) then show that
   these cones glue to give a fan and they give an algorithm to
   compute this fan. They show that their techniques apply to ideals
   in the subrings of convergent or formal power series over a field
   and treat this case explicitly. This leads to the notion of the local
   standard fan which covers the negative orthant and whose cones
   characterise the impact of the  local monomial orderings on the
   ideal in the power series ring.

   Even though the approach is
   algorithmical, it cannot be applied in practice right away,
   since the computation of the standard cones heavily relies  on the
   computation of a reduced standard basis, which even for polynomial
   input data in general contains power series and is not feasible in
   practice.  If the input data is polynomial
   Bahloul and Takayama, therefore, propose to homogenise the ideal, compute the
   Gr\"obner fan with the usual techniques and then to cut down the additional
   variable again. This will lead to a refinement of the actual local
   standard fan, but for each pair of neighbouring cones one can check
   with a standard basis computation, if the cones should be glued in
   the local standard fan. Since the number of fulldimensional cones in the refined
   fan may be larger by an order of magnitude, this approach is very
   expensive.

   In our paper we address a situation which in some respects is more
   general and in some is much more specialised than the above. It is
   motivated by a very particular application that we have in mind,
   the computation of tropical varieties over the $p$-adic numbers. These
   appear as the intersection of a subfan of the Gr\"obner fans
   studied in this paper with an affine hyperplane
   (see~\cite{MR15b}). Here we lay the theoretical and the
   algorithmical foundation for this approach to compute tropical
   varieties over the $p$-adic numbers, leading to the only currently
   available software for computing these varieties.

   In this paper we allow as coefficient domain a ring $R$ satisfying some
   additional technical properties which ensure that standard bases
   over $R$ can be computed (see Page~\ref{page:basering} and
   \cite{MRW15}). We then consider $\ux$-homogeneous ideals $I$ in the mixed power series
   polynomial ring $\Rtx=\Rt[x_1,\ldots,x_n]$,
   that is, we consider one local variable and any finite number of global
   variables. We then define the Gr\"obner fan of $I$ as usual with some
   necessary adjustments. The main theoretical result of this paper shows that the Gr\"obner
   fan is indeed a rational polyhedral fan covering all of the half space
   $\RR_{\leq 0}\times\RR^n$ (see Theorem~\ref{thm:groebnerFan}).
   For the theory the generators of $I$ may
   be arbitrary power series in the local variable, for the practice
   we restrict to input data which is polynomial in $t$ as well as in
   $\ux$, but homogeneity is only required w.r.t.~$\ux$.
   A major point when it comes to actually computing the Gr\"obner fans
   is that restricting to one local variable allows us to replace reduced
   standard bases by the weaker notion of initially reduced
   standard bases. We show  that these are sufficiently strong to
   let us read off the Gr\"obner cones (see
   Section~\ref{sec:groebnerfan}), yet weak enough to be computable
   for polynomial input data with a finite number of steps at the same
   time in important cases.

   Note that for polynomial input data we could have followed the
   approach of Bahloul and Takayama (see \cite{BT06,BT07}) by
   homogenising first, cutting down and gluing cones. However, not
   only is the gluing very costly, the Gr\"obner fan of the homogenised ideal
   has way more cones and these have plenty more facets that have to
   be traversed. For a simple tropical linear space in an example we
   have $20$ full-dimensional cones without homogenisation and $1393$
   for the homogenised ideal, and the number of facets that have to be
   traversed has increase by a factor way larger than $100$. Thus, already
   computing the Gr\"obner fan of the homogenised ideal is much more
   expensive than computing the Gr\"obner fan of $I$ directly via our
   approach.

   In Section~\ref{sec:basic} we introduce the basic notions used
   throughout the paper and we show that also in our situation there
   are only finitely many possible leading
   ideals. Section~\ref{sec:groebnerfan} is devoted to proving that
   the Gr\"obner fan is a rational polyhedral fan. We provide a
   constructive approach for the Gr\"obner cones using
   initially reduced standard bases.
   In Section~\ref{sec:initiallyreduced} we present algorithms to reduce standard bases
   initially in finite time under some additional hypotheses on $R$
   and the ideal (see Page~\pageref{page:initialReduction}), and in
   Section~\ref{sec:computation} we finally provide
   algorithms to compute Gr\"obner fans
   of ${\ux}$-homogeneous ideals, where for the latter we follow the
   lines of \cite{FJT07}. The algorithms are implemented in
   and distributed with \textsc{Singular} and they complement the
   software package \texttt{gfan}
   (see \cite{gfan}) by Jensen which is specialised in computing Gr\"obner
   fans for ideals in polynomial rings and their tropical varieties.

   \section{Basic notions}\label{sec:basic}

   Throughout this paper we assume that $R$ is a noetherian ring and
   that linear equations in $R$ are solvable,\label{page:basering}
   that is, for any choice of
   $c_1,\ldots,c_k\in R$ we can decide
   the ideal membership problem $b\in\langle c_1,\ldots,c_k\rangle$,
   if applicable represent $b$ as $b = a_1\cdot c_1+\cdots+a_k\cdot c_k$, and compute
   a finite generating set of the syzygy module
   $\syz_R(c_1,\ldots,c_k)$. The most important example that we have
   in mind is the ring of integers. For further classes of interesting  examples see
   \cite[Ex.~1.2]{MRW15}. Due to \cite{MRW15} this assumption ensures that in
   the mixed power series polynomial ring
   \begin{displaymath}
     \Rtx := R\llbracket t \rrbracket [x_1,\ldots,x_n],
   \end{displaymath}
   with a single variable~$t$, standard bases exist and are computable in finite
   time and with polynomial output, if the ideal is generated by
   polynomials.

   We represent an element $f$ of $\Rtx$ in the usual multiindex
   notation as
   \begin{displaymath}
     f=\sum_{\beta,\alpha}c_{\alpha,\beta}\cdot t^\beta\ux^\alpha
   \end{displaymath}
   with $\beta\in\NN$ and $\alpha=(\alpha_1,\ldots,\alpha_n)\in\NN^n$
   where $\ux^\alpha=x_1^{\alpha_1}\cdots x_n^{\alpha_n}$, and we
   sometimes represent it as
   \begin{displaymath}
     f=\sum_{\alpha}g_\alpha\cdot\ux^\alpha
   \end{displaymath}
   with
   \begin{displaymath}
     g_\alpha=\sum_\beta c_{\alpha,\beta}\cdot t^\beta\in
     \Rt
   \end{displaymath}
   as an element in the polynomial ring in $\ux$ over the ring
   $\Rt$. We then call $f$  \emph{$x$-homogeneous} if all monomials
   $\ux^\alpha$ have the same degree, and we call an ideal
   $I\unlhd\Rtx$ \emph{$x$-homogeneous} if it is generated by
   $\ux$-homogeneous elements. In what follows we will construct
   Gr\"obner fans of ${\ux}$-homogeneous ideals $I\unlhd\Rtx$ as fans
   on the closed half space $\RR_{\leq 0}\times\RR^n$.

   Let us now fix some standard notation used in the context of
   standard bases and Gr\"obner fans.
   We denote by
   \begin{displaymath}
     \Mon(t,\ux)=\big\{t^\beta\cdot\ux^\alpha\;\bigm|\;\beta\in\NN, \alpha\in\NN^n\big\}
   \end{displaymath}
   the multiplicative semigroup of monomials in the variables $t$ and $\ux$. A \emph{monomial
     ordering} on $\Mon(t,\ux)$ is a total ordering $>$ which is
   compatible with the semigroup structure on $\Mon(t,\ux)$, and we
   call it \emph{$t$-local} if  $1>t$. The least monomial
   $t^\beta\ux^\alpha$ w.r.t.~a
   $t$-local monomial ordering $>$ occuring in $0\not=f\in\Rtx$ is called the
   \emph{leading monomial} $\lm_>(f)=t^\beta\ux^\alpha$ of $f$, the corresponding coefficient
   is its \emph{leading coefficient} $\lc_>(f)=c_{\beta,\alpha}$,
   the term $\lt_>(f)=\lc_>(f)\cdot\lm_>(f)$ is its \emph{leading
     term} and $\tail_>(f)=f-\lt_>(f)$ its \emph{tail}, and we set
   $\lt_>(0)=0$. We call the ideal
   \begin{displaymath}
     \lt_>(I):=\langle \lt_>(f)\;|\; f\in I\rangle\unlhd\Rtxp
   \end{displaymath}
   the \emph{leading ideal} of $I$ w.r.t.~$>$. Note, that it is an
   ideal generated by terms, but in general not by monomials, since
   $R$ is only a ring. However, as in the case of base fields the
   number of possible leading ideals w.r.t.~$t$-local monomial
   orderings is finite, which will essentially imply that the Gr\"obner
   fan of $I$ has only finitely many cones.
   The proof of is an adaptation of the proof of \cite[Thm.~4.1]{CLO05}.

   \begin{proposition}\label{prop:finiteLeadingIdeals}
     Any ${\ux}$-homogeneous ideal $I\unlhd\Rtx$ has only finitely many leading ideals.
   \end{proposition}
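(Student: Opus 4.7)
The plan is to argue by contradiction via a tree/pigeonhole argument in the spirit of \cite[Thm.~4.1]{CLO05}. Suppose $I$ admits pairwise distinct leading ideals $L_1,L_2,\ldots$ associated to $t$-local monomial orderings $>_1,>_2,\ldots$, and set $S_0:=\{>_1,>_2,\ldots\}$. The aim is to construct inductively a decreasing sequence of infinite subfamilies $S_0\supseteq S_1\supseteq S_2\supseteq\cdots$ together with terms $\tau_1,\tau_2,\ldots\in\Rtxp$ such that $\tau_i\in\lt_>(I)$ for every $>\in S_k$ and $i\le k$, while the ideals $T_k:=\langle\tau_1,\ldots,\tau_k\rangle$ form a strictly ascending chain $T_1\subsetneq T_2\subsetneq\cdots$. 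Such a chain is impossible in the Noetherian ring $\Rtxp=R[t,\ux]$ (Noetherian by Hilbert's basis theorem, as $R$ is), giving the desired contradiction.

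The main preliminary tool I would establish is the following: for any $\ux$-homogeneous $f\in I$, only finitely many terms arise as $\lt_>(f)$ as $>$ ranges over $t$-local orderings. Indeed, writing $f=\sum_\alpha g_\alpha\ux^\alpha$ with $g_\alpha\in\Rt$, the $\ux$-support of $f$ is finite, and $t$-locality forces the $t$-exponent contributed by any given $\alpha$ to equal the smallest $\beta$ with $c_{\alpha,\beta}\neq 0$ in $g_\alpha$. Thus the leading term of $f$ lies in a finite set of candidates of the form $c_{\alpha,\beta}\cdot t^\beta\ux^\alpha$.

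To execute the inductive step, I would argue as follows. At stage $k$ the leading ideals $\lt_>(I)$ for $>\in S_{k-1}$ are pairwise distinct and all contain $T_{k-1}$, so at most one of them can equal $T_{k-1}$; for all but at most one $>\in S_{k-1}$ we have $\lt_>(I)\supsetneq T_{k-1}$, and we may choose a minimal generator of $\lt_>(I)$ lying outside $T_{k-1}$. Using $\ux$-homogeneity of $I$ to work $\ux$-degree by $\ux$-degree (so that only finitely many $\ux$-monomials occur in each fixed degree) together with the finiteness observation above, one confines these minimal generators to a finite set of possible terms. Pigeonhole then yields a common term $\tau_k\notin T_{k-1}$ and an infinite $S_k\subseteq S_{k-1}$ on which $\tau_k\in\lt_>(I)$, completing the inductive step.

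The main obstacle is making the pigeonhole step rigorous: over a field one simply invokes the finiteness of the support of a single polynomial, but here two new features must be accommodated. First, we work over the ring $R$, so leading ideals are generated by terms (carrying $R$-coefficients) rather than by monomials, and one has to pigeonhole on terms; this is harmless because the $t$-exponent of any leading term of a fixed power series is uniquely determined by $t$-locality, and the leading coefficient is then likewise determined. Second, the $\ux$-degree of the ``new'' minimal generator $\tau_k$ could a priori grow with $k$; here $\ux$-homogeneity of $I$ and of the $L_i$ is essential, since it lets one localise the search to a single $\ux$-graded component and then reduce the pigeonhole to a finite set of $\ux$-monomials, just as in the field case.
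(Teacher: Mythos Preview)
Your overall strategy --- produce an infinite strictly ascending chain of term ideals in the Noetherian ring $\Rtxp$ --- is the right one, and your observation that a fixed $\ux$-homogeneous element has only finitely many possible leading terms under $t$-local orderings is exactly what is needed. The gap is in the inductive step: you choose, for each $>\in S_{k-1}$, a minimal generator $\tau_>$ of $\lt_>(I)$ outside $T_{k-1}$, and then claim these lie in a finite set. But $\tau_>$ is the leading term of an element $f_>\in I$ that \emph{varies with} $>$, so your finiteness observation (which is about a single fixed $f$) does not apply, and ``working $\ux$-degree by $\ux$-degree'' does not help because nothing bounds the $\ux$-degree of $\tau_>$. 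Even over a field the degrees of minimal generators of initial ideals are not bounded in any elementary way by the degrees of generators of $I$, so this step is not ``just as in the field case'' --- the standard field argument does not pigeonhole on minimal generators of leading ideals either.

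The paper closes this gap by reversing the two operations. It maintains a finite set $G_k\subseteq I$ of $\ux$-homogeneous elements and first pigeonholes on the (finitely many, by your own observation) possible leading terms of the elements of $G_k$, obtaining an infinite subfamily $\Delta_k$ of orderings on which $J_k:=\langle\lt_>(g)\mid g\in G_k\rangle$ is independent of $>\in\Delta_k$. Then, for a single chosen $>_k\in\Delta_k$, it picks one $\ux$-homogeneous $g_{k+1}\in I$ with $\lt_{>_k}(g_{k+1})\notin J_k$ and reduces it via division with remainder so that \emph{no term} of $g_{k+1}$ lies in $J_k$; since $J_k$ is the same for every $>\in\Delta_k$, this forces $\lt_>(g_{k+1})\notin J_k$ for all such $>$. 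Setting $G_{k+1}:=G_k\cup\{g_{k+1}\}$ gives the strict chain $J_1\subsetneq J_2\subsetneq\cdots$. The key point is that a single reduced element of $I$ replaces your whole family $\{\tau_>\}$, and its finitely many candidate leading terms feed the next pigeonhole step. This is also how the argument in \cite{CLO05} actually runs.
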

   \begin{proof}
     Observe that an element $g\in\Rtx$ has only finitely many possible
     leading terms, since there are only finitely many distinct monomials
     in ${\ux}$ and a leading term w.r.t.~a $t$-local monomial
     ordering has to have minimal power in $t$.

     Now assume there are infinitely many leading ideals. For each leading ideal $J$,
     let $>_J$ be a $t$-local monomial ordering such that $\lt_{>_J}(I)=J$.
     Set $\Delta_0 := \{>_J \mid J \text{ leading ideal of } I\}$, so that different orderings in $\Delta_0$
     yield different leading ideals. By our assumption, $\Delta_0$ is infinite.

     Let $G_1 \subseteq I$ be a finite ${\ux}$-homogeneous generating set of
     $I$ and set $\Sigma_1$ to be the union of all potential leading
     terms of elements of $G_1$.
     Then $\Sigma_1$ is finite and hence, by the pigeonhole principle,
     there must be infinitely many monomial orderings
     $\Delta_1\subseteq\Delta_0$ which agree on $\Sigma_1$.
     \cite[Cor.~2.8]{MRW15} now implies that
     if $G_1\subseteq I$ was a standard basis for one of them,
     it would be a standard basis for all of them.
     As this cannot be the case, given an ordering $>_1\;\in\Delta_1$
     there must be an element $g_2 \in I$ such that
     $\lt_{>_1}(g_2)\notin J_1:=\langle \lt_{>_1}(g) \mid g \in
     G_1\rangle$ with $J_1$ being independent from the ordering chosen.

     Since $I$ is ${\ux}$-homogeneous, we may choose $g_2$ to be
     ${\ux}$-homogeneous. Moreover, by computing a determinate division with
     remainder w.r.t.~$G_1$ and $>_1$, we may assume that no
     term of $g_2$ lies in $J_1$ (see e.g.~condition (DD2) in
     \cite[Alg.~1.13]{MRW15}). In particular,
     \begin{displaymath}
       \lt_{>}(g_2)\notin J_1:=\langle \lt_{>}(g) \mid g \in G_1\rangle \text{ for any ordering } >\;\in\Delta_1.
     \end{displaymath}
     Setting $G_2:=G_1\cup \{g_2\}$, we can repeat the entire process,
     and find an infinite subset of monomial orderings
     $\Delta_2\subseteq\Delta_1$ such that $G_2$ is either a standard
     basis for all of them or for none of them. Consequently, there is a
     $g_3 \in I$ such that $\lt_{>}(g_3)\notin J_2:=\langle \lt_{>}(g)
     \mid g \in G_2\rangle$ for all monomial orderings $>\;\in
     \Delta_2$.
     We thus obtain an infinite chain of strictly ascending ideals
     $J_1 \subsetneq J_2 \subsetneq \ldots$,
     which contradicts the ascending chain condition of our noetherian ring $\Rtxp$.
   \end{proof}

   A weight vector $w=(w_0,\ldots,w_n)\in\RR_{<0}\times\RR^n$ induces a
   partial ordering on $\Mon(t,\ux)$ via
   \begin{displaymath}
     t^\beta\ux^\alpha \geq t^\delta\ux^\gamma
     \quad:\Longleftrightarrow\quad
     w\cdot(\beta,\alpha)\geq w\cdot(\delta,\gamma),
   \end{displaymath}
   where ``$\cdot$'' denotes the canonical scalar product. Any
   monomial ordering $>$ on $\Mon(t,\ux)$ can be used as a tie breaker
   to refine this partial ordering to a $t$-local monomial ordering
   $>_w$. Given $w\in\RR_{<0}\times\RR^n$ we denote by
   \begin{displaymath}
     \initial_w(f)=\sum_{w\cdot (\beta,\alpha)\text{
         maximal}}c_{\beta,\alpha}\cdot t^\beta\ux^\alpha\in \Rtxp
   \end{displaymath}
   the \emph{initial form} of $f$ w.r.t.~$w$ and by
   \begin{displaymath}
     \initial_w(I)=\langle \initial_w(f)\;|\;f\in I\rangle\unlhd \Rtxp
   \end{displaymath}
   the \emph{initial ideal} of $I$.

   Initial ideals of $I$ can be used to define an equivalence relation
   on the space of weight vectors $\RR_{<0}\times\RR^n$, by setting
   \begin{displaymath}
     w \sim v \quad :\Longleftrightarrow \quad \initial_w(I) = \initial_v(I).
   \end{displaymath}
   We denote the closure the equivalence class of a weight vector
   $w\in\RR_{< 0}\times\RR^n$ in the Euclidean topology by
   \begin{displaymath}
     C_w(I) := \ov{ \{ v\in \RR_{< 0}\times\RR^n \mid \initial_v(I)=\initial_w(I) \}}
     \subseteq \RR_{\leq 0}\times\RR^n,
   \end{displaymath}
   and call it an \emph{interior Gr\"obner cone} of $I$. We then call the intersection of $C_w(I)$ with the boundary,
   \begin{displaymath}
     C_w^0(I) := C_w(I) \cap (\{0\}\times\RR^n),
   \end{displaymath}
   a \emph{boundary Gr\"obner cone} of $I$, and given any
   $t$-local monomial ordering $>$, we set
   \begin{displaymath}
     C_>(I) := \ov{ \{ v\in \RR_{<0}\times\RR^n \mid \initial_v(I)=\lt_>(I)\} }\subseteq \RR_{\leq 0}\times\RR^n.
   \end{displaymath}
   Finally, we refer to the collection
   \begin{displaymath}
     \Sigma(I):=\{C_w(I)\mid w\in \RR_{<0}\times\RR^n\} \cup \{ C_w^0(I)\mid w\in \RR_{<0}\times\RR^n\},
   \end{displaymath}
   of all cones as the \emph{Gr\"obner fan} of $I$. It is this object
   whose properties we want to study and that we want to compute.

   \begin{example}\label{ex:intersectionHomogeneitySpace}
     Consider the principal ideal
     \begin{math}
       I=\langle g\rangle \unlhd \ZZ\llbracket t\rrbracket [x,y]
     \end{math}
     with $g=tx^2+xy+ty^2$.
     Because $\initial_w(I)=\langle\initial_w(g)\rangle$ for any
     $w\in\RR_{< 0}\times\RR^2$ and $g$ is $(x,y)$-homogeneous, it is easy
     to see that every Gr\"obner cone of $I$ is invariant under
     translation by $(0,1,1)$. Its Gr\"obner fan divides the weight space
     $\RR_{\leq 0}\times\RR^2$ into three distinct maximal Gr\"obner
     cones, see Figure~\ref{fig:groebnerFanPrincipalIdeal}. Note that
     the two
     red maximal cones intersect each other solely in the boundary
     $\{0\}\times\RR^2$, while the third maximal cone intersects the
     boundary in codimension $2$.
     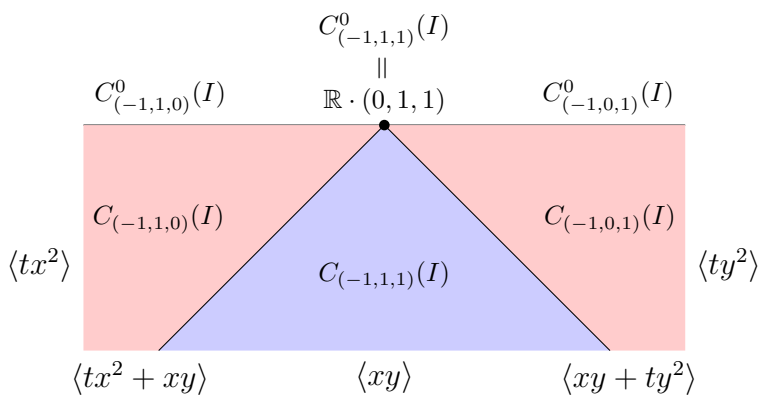
\begin{figure}[h]
       \centering
       \begin{tikzpicture}
         \draw (-4,0) -- (4,0);
         \fill [red!20] (-4,0) -- (0,0) -- (-3,-3) -- (-4,-3) -- cycle;
         \fill [red!20] (4,0) -- (0,0) -- (3,-3) -- (4,-3) -- cycle;
         \fill [blue!20] (0,0) -- (3,-3) -- (-3,-3) -- cycle;
         \fill (0,0) circle (2pt);
         \node [anchor=south, font=\footnotesize] (linspace) at (0,0) {$\RR\cdot (0,1,1)$};
         \node [anchor=south, font=\footnotesize, yshift=0.25cm] (C0) at (linspace.north) {$C_{(-1,1,1)}^0(I)$};
         \draw [draw opacity = 0] (linspace) -- node[sloped] {$=$} (C0);
         \draw (0,0) -- (-3,-3);
         \draw (0,0) -- (3,-3);
         \node [font=\footnotesize] at (-3,-1.25) {$C_{(-1,1,0)}(I)$};
         \node [anchor=north] at (0,-3) {$\langle xy \rangle$};
         \node [anchor=north east] at (-4,-1.5) {$\langle tx^2 \rangle$};
         \node [anchor=north west] at (4,-1.5) {$\langle ty^2 \rangle$};
         \node [font=\footnotesize] at (3,-1.25) {$C_{(-1,0,1)}(I)$};
         \node [anchor=north,xshift=-0.25cm] at (-3,-3) {$\langle tx^2+xy \rangle$};
         \node [anchor=north,xshift=0.25cm] at (3,-3) {$\langle xy+ty^2 \rangle$};
         \node [font=\footnotesize] at (0,-2) {$C_{(-1,1,1)}(I)$};
         \node [anchor=south west, font=\footnotesize] at (-4,0) {$C_{(-1,1,0)}^0(I)$};
         \node [anchor=south east, font=\footnotesize] at (4,0) {$C_{(-1,0,1)}^0(I)$};
       \end{tikzpicture}
       \caption{$\Sigma(\langle tx^2+xy+ty^2\rangle)$ projected along $\RR\cdot (0,1,1)$}
       \label{fig:groebnerFanPrincipalIdeal}
     \end{figure}
   \end{example}

   \lang{
   We will finish this section with two simple technical results on ideals in
   $\Rtxp$, well known for the case of base fields, which will be used
   when dealing with initial ideals.

   \begin{lemma}\label{lem:idealGeneratedByTerms0}
     Let $J\unlhd R[t,{\ux}]$ be an ideal generated by terms and let $f\in J$. Then each term of $f$ is again contained in $J$.
   \end{lemma}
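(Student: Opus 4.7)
The plan is to write $J$ explicitly in terms of its generating terms and then regroup. Let $J=\langle s_1,\ldots,s_m\rangle$ where each $s_i=r_i\cdot t^{\beta_i}\ux^{\alpha_i}$ is a term with $r_i\in R$. Any $f\in J$ admits a representation $f=\sum_{i=1}^m h_i\cdot s_i$ with $h_i\in R[t,\ux]$. I would expand each $h_i$ into its own terms, $h_i=\sum_\gamma c_{i,\gamma}\cdot t^{\delta_{i,\gamma}}\ux^{\gamma_{i,\gamma}}$, so that
\begin{displaymath}
  f=\sum_{i,\gamma} c_{i,\gamma}\cdot t^{\delta_{i,\gamma}}\ux^{\gamma_{i,\gamma}}\cdot s_i,
\end{displaymath}
and observe that every single summand on the right hand side is a multiple of some $s_i$, hence lies in $J$.

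Next, I would fix an arbitrary monomial $M=t^\beta\ux^\alpha$ appearing in $f$ and collect on the right hand side all contributions that produce exactly the monomial $M$. These are precisely the indices $(i,\gamma)$ for which $t^{\delta_{i,\gamma}}\ux^{\gamma_{i,\gamma}}\cdot t^{\beta_i}\ux^{\alpha_i}=M$, which forces in particular $t^{\beta_i}\ux^{\alpha_i}\mid M$. Summing the corresponding summands gives an expression of the form $\bigl(\sum c_{i,\gamma}\cdot r_i\bigr)\cdot M$, which by construction is equal to the term $a_M\cdot M$ of $f$ attached to $M$. As this expression is a sum of elements, each a monomial multiple of some $s_i$, it lies in $J$.

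Hence every term of $f$ is an element of $J$, which is the claim. The argument is essentially bookkeeping; the only mild subtlety is that the coefficient $a_M\cdot M$ of $M$ in $f$ is in general \emph{not} a single scalar multiple of one $s_i$, but rather a sum of such multiples, so I cannot conclude termwise from the presentation $f=\sum h_i s_i$ directly, and must pass through the fully expanded presentation above before regrouping by monomial.
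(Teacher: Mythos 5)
Your argument is correct, but it is a genuinely different route from the one in the paper. You expand a representation $f=\sum_i h_i\cdot s_i$ completely into single-term contributions and then regroup by monomial, using that the monomials form an $R$-basis of $R[t,\ux]$ to identify the term $a_M\cdot M$ of $f$ with the (finite) sum of all contributions carrying the monomial $M$; each contribution is a term multiple of some generator $s_i$, hence lies in $J$, and so does the sum. This needs no monomial ordering and no induction, and you correctly flag the one subtlety, namely that $a_M\cdot M$ is in general a sum of multiples of several $s_i$ rather than a single one. The paper instead fixes a $t$-local monomial ordering, first arranges the representation so that no $q_i\cdot p_i$ contributes monomials beyond those of $f$, extracts $\lt_>(f)$ as $\sum_j\lt_>(q_{i_j})\cdot p_{i_j}\in J$, and then iterates on $f-\lt_>(f)$, peeling off one leading term at a time (the process terminates because $f$ has finitely many terms). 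The leading-term formulation is what the paper actually reuses downstream (e.g.\ in the proof of Lemma~\ref{lem:idealGeneratedByTerms1}, where one repeatedly passes to $\lt_>$ of a difference), whereas your version is the more elementary and ordering-free statement; of course your conclusion that \emph{every} term of $f$ lies in $J$ immediately yields $\lt_>(f)\in J$ for any ordering, so nothing is lost. The only cosmetic point is that you write $J=\langle s_1,\ldots,s_m\rangle$ with finitely many term generators; this is harmless since any $f$ in an ideal generated by an arbitrary set of terms already lies in the ideal generated by finitely many of them.
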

   \begin{proof}
     Let $>$ be a $t$-local monomial ordering on $\Mon(t,{\ux})$, and let
     $p_1,\ldots,p_k\in R[t,{\ux}]$ be terms generating $J$.
     Hence there exist $q_1,\ldots,q_k
     \in R[t,{\ux}]$ such that
     \begin{displaymath}
       f=q_1\cdot p_1+\ldots+q_k\cdot p_k,
     \end{displaymath}
     and we may assume that $\lm_>(f)\geq\lm_>(q_i\cdot p_i)$, because we
     may drop all terms $s_i$ of $q_i$ with $\lm_>(f)<\lm_>(s_i\cdot
     p_i)$ and still retain the equality.
     Hence there are suitable $1\leq i_1<\ldots<i_l\leq k$ contributing to the leading term such that
     \begin{align*}
       \lt_>(f)&=\lt_>(q_{i_1}\cdot p_{i_1})+\ldots+\lt_>(q_{i_l}\cdot p_{i_l}) \\
       &=\lt_>(q_{i_1})\cdot p_{i_1}+\ldots+\lt_>(q_{i_l})\cdot p_{i_l} \in J.
     \end{align*}
     Moreover, this also means $f-\lt_>(f)\in J$ and we can continue this
     process leading term by leading term to see that every term of $f$
     lies in $J$.
   \end{proof}

   As an immediate consequence, we get the following.

   \begin{lemma}\label{lem:idealGeneratedByTerms1}
     Let $J\unlhd R[t,{\ux}]$ be an ideal generated by terms and let $>$ be a $t$-local monomial ordering on $\Mon(t,{\ux})$.
     Let $f\in R[t,{\ux}]$ such that $u\cdot f\in J$ for some $u\in R[t,{\ux}]$ with $\lt_>(u)=1$. Then $f\in J$.
   \end{lemma}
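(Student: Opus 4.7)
My plan is to proceed by induction on the number of terms of $f$, reducing each step to Lemma~\ref{lem:idealGeneratedByTerms0}. If $f=0$ the claim is trivial, so assume $f\neq 0$.

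The crucial step will be to establish the identity $\lt_>(u\cdot f)=\lt_>(f)$. For any monomial $m$ appearing in $u$ and any monomial $n$ appearing in $f$, compatibility of $>$ with the semigroup structure combined with $m\geq\lm_>(u)=1$ and $n\geq\lm_>(f)$ yields $m\cdot n\geq\lm_>(f)$. By cancellation in the free abelian monoid $\Mon(t,\ux)$, equality holds if and only if $m=1$ and $n=\lm_>(f)$. Hence no other pair of monomials contributes to the coefficient of $\lm_>(f)$ in $u\cdot f$, which is therefore $1\cdot\lc_>(f)=\lc_>(f)\neq 0$, proving $\lt_>(u\cdot f)=\lt_>(f)$.

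The induction is then immediate: since $u\cdot f\in J$, Lemma~\ref{lem:idealGeneratedByTerms0} gives $\lt_>(u\cdot f)=\lt_>(f)\in J$; setting $f':=f-\lt_>(f)$, we obtain $u\cdot f'=u\cdot f-u\cdot\lt_>(f)\in J$ because $J$ is an ideal. Since $f'$ has strictly fewer terms than $f$, the induction hypothesis gives $f'\in J$, and thus $f=f'+\lt_>(f)\in J$.

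The only delicate point is the cancellation argument identifying $\lt_>(u\cdot f)$ with $\lt_>(f)$; this is where the hypothesis $\lt_>(u)=1$ is essential, as it rules out any pair $(m,n)$ with $m>1$ from contributing to the monomial $\lm_>(f)$ in $u\cdot f$ (such a pair would force $n<\lm_>(f)$, contradicting the minimality of $\lm_>(f)$ among monomials of $f$).
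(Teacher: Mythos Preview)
Your proof is correct and follows essentially the same approach as the paper: both peel off $\lt_>(f)$ using the identity $\lt_>(u\cdot f)=\lt_>(f)$ together with Lemma~\ref{lem:idealGeneratedByTerms0}, and then iterate on $f-\lt_>(f)$ (you phrase this as an explicit induction on the number of terms, the paper as ``continue leading term by leading term''). In fact you give a more careful justification of the identity $\lt_>(u\cdot f)=\lt_>(f)$ than the paper, which simply asserts it.
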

   \begin{proof}
     Lemma~\ref{lem:idealGeneratedByTerms0} implies $\lt_>(u\cdot f) =
     \lt_>(f) \in J$. Moreover, this implies $u\cdot\lt_>(f)\in J$ and
     therefore we also obtain
     \begin{displaymath}
       \lt_>(f-\lt_>(f)) = \lt_>(u\cdot (f-\lt_>(f)))= \lt_>(u\cdot f - u\cdot \lt_>(f)) \in J.
     \end{displaymath}
     We can again continue this process leading term by leading term to
     see that every term of $f$ lies in $J$. In particular, because $f$
     consists of only finitely many terms, $f\in J$.
   \end{proof}
   }

   \section{The Gr\"obner fan}\label{sec:groebnerfan}

   This section is devoted to the study of the Gr\"obner fan of an
   $\ux$-homoge\-neous ideal $I$ in $\Rtx$. We will show that it is a
   rational polyhedral fan (see~Theorem~\ref{thm:groebnerFan}),
   i.e.~it is a finite collection of rational
   polyhedral cones containing all faces of each cone in the
   collection and such that the intersection of each two cones in the
   collection is a face of both. For this we introduce the notion of
   an initially reduced standard basis of $I$ w.r.t.~a $t$-local
   monomial ordering, and show how such a
   standard basis can be used to read off the Gr\"obner cone $C_w(I)$
   (see~Proposition~\ref{prop:groebnerConeDecomposition}). All proofs
   in this section, except that of
   Proposition~\ref{prop:existenceinitiallyreducedstb}, are
   constructive, so that we end up with algorithms to compute
   Gr\"obner cones, provided that we can compute initially reduced
   standard bases.

   Let us first recall that a \emph{standard basis} of an ideal $I\unlhd\Rtx$ w.r.t.~a
   $t$-local monomial ordering $>$ is a finite subset $G$ of $I$, such
   the leading terms of its elements w.r.t.~$>$ generate the leading
   ideal of $I$. A standard basis of $I$ is automatically a generating
   set of $I$. A standard basis $G$ of $I$ is called \emph{reduced} if
   no term of the tail of any element of $G$ is in $\lt_>(I)$ and if it
   is minimal, i.e.~$\lt_>(I)$ cannot be generated by any proper
   subset of the set of leading terms of $G$. Observe that we forego
   any kind of normalisation of the leading
   coefficients that is normally done in polynomial rings over
   fields. By \cite[Alg.~4.2]{MRW15} reduced standard bases of
   $\ux$-homogeneous ideals in $\Rtx$ exist. However, even if the
   ideal $I$ is generated by polynomials in $\Rtxp$ the elements in a
   reduced standard of $I$ will in general be power series in $t$. We,
   therefore, now introduce a weaker notion.

   \begin{definition}[Initially reduced standard bases]\label{def:initialReduction}
     Let  $>$ be a $t$-local
     monomial ordering on $\Mon(t,\ux)$, and let
     $G,H \subseteq \Rtx$ be finite subsets where $G=\{
     g_1,\ldots,g_k\}$ with $g_i=\sum_{\alpha\in\NN^n} g_{i,\alpha}\cdot
     {\ux}^\alpha$, $g_{i,\alpha} \in \Rt$.
     \begin{enumerate}
     \item
       $G$ is \emph{reduced} w.r.t.~$H$, if no term
       of $\tail_>(g_i)$ lies in $\lt_>(H)$ for any $i$.
     \item
       We call $G$ \emph{initially
         reduced} w.r.t.~$H$, if the set
       \begin{displaymath}
         G':=\Big\{g_i':=\sum_{\alpha\in\NN} \lt_>(g_{i,\alpha})\cdot {\ux}^\alpha\;\bigmid\; i=1,\ldots,k\Big\},
       \end{displaymath}
       is reduced w.r.t.~$H$,
       i.e.~no term of $\tail_>(g_i')$ is in $\lt_>(H)$ for any $i$.
     \item
       We call a standard basis $G$ \emph{initially reduced}, if it is minimal and initially reduced w.r.t.~itself.
     \end{enumerate}
   \end{definition}

   \begin{example}
     Obviously, any reduced standard basis is initially reduced. The
     converse is false, since $G=\{1-t\}$ is initially reduced
     w.r.t.~any $t$-local monomial ordering, but it is not reduced.
   \end{example}

    \begin{proposition}[Existence of initially reduced standard bases]\label{prop:existenceinitiallyreducedstb}
     Any $\ux$-homogeneous ideal in $\Rtx$ has an initially reduced
     standard basis w.r.t.~any $t$-local monomial ordering.
   \end{proposition}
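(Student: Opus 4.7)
The plan is to show that every reduced standard basis of $I$ is already initially reduced, and then invoke the existence of reduced standard bases to conclude. In other words, I reduce Proposition~\ref{prop:existenceinitiallyreducedstb} to \cite[Alg.~4.2]{MRW15}, which guarantees that an $\ux$-homogeneous ideal in $\Rtx$ admits a reduced standard basis w.r.t.~any $t$-local monomial ordering.

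Fix a reduced standard basis $G=\{g_1,\ldots,g_k\}$ of $I$ w.r.t.~$>$ obtained from \cite[Alg.~4.2]{MRW15}. Since reducedness already includes minimality, it suffices to verify the initial reduction condition: no term of $\tail_>(g_i')$ lies in $\lt_>(G)$, where $g_i':=\sum_{\alpha}\lt_>(g_{i,\alpha})\cdot\ux^\alpha$. The key observation is that every summand $\lt_>(g_{i,\alpha})\cdot\ux^\alpha$ is literally a term of $g_i$ in its expansion $g_i=\sum_{\beta,\alpha}c_{\alpha,\beta}\cdot t^\beta\ux^\alpha$. Indeed, because $>$ is $t$-local, $\lt_>$ applied to the power series $g_{i,\alpha}\in\Rt$ extracts a single monomial of minimal $t$-exponent, so multiplying by $\ux^\alpha$ yields exactly one of the terms of $g_i$. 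Moreover, the leading term of $g_i$ itself must have minimal $t$-exponent in its own $\ux$-column, so $\lt_>(g_i')=\lt_>(g_i)$. Consequently every term of $\tail_>(g_i')$ is a term of $\tail_>(g_i)$, and by reducedness of $G$ no such term can lie in $\lt_>(G)=\lt_>(I)$. Hence $G$ is initially reduced.

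No substantial obstacle is expected: the argument is immediate from the definitions together with the $t$-locality of $>$ and the compatibility of $>$ with the semigroup structure on $\Mon(t,\ux)$. This indirect route through a reduced standard basis is precisely why the proof is not constructive, as highlighted in the remark opening Section~\ref{sec:groebnerfan}: elements of a reduced standard basis of $I$ are in general genuine power series in $t$ and cannot be produced in finite time, whereas the algorithms of Section~\ref{sec:initiallyreduced} will produce initially reduced standard bases in finite time under additional hypotheses on $R$ and $I$.
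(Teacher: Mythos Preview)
Your proposal is correct and follows exactly the paper's approach: invoke \cite[Alg.~4.2]{MRW15} to obtain a reduced standard basis and observe that any reduced standard basis is automatically initially reduced. The paper states this implication as obvious (see the example immediately preceding the proposition), whereas you spell out the verification that each term of $g_i'$ is already a term of $g_i$ and that $\lt_>(g_i')=\lt_>(g_i)$; this added detail is accurate and the commentary on non-constructiveness matches the paper's own remarks.
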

   \begin{proof}
     \lang{If $I\unlhd \Rtx$ is ${\ux}$-homogeneous, there exists a reduced
       standard basis $G$ w.r.t.~any $t$-local monomial ordering $>$ by
       \cite[Alg.~4.2]{MRW15} and $G$ is also
       initially reduced.}
     \kurz{By \cite[Alg.~4.2]{MRW15} reduced standard bases exist.}
   \end{proof}

   Algorithm~4.2 in \cite{MRW15} does not produce the basis $G$ in
   finite time, even if the input data is polynomial. The question,
   how to achieve this, is postponed to
   Section~\ref{sec:initiallyreduced}, where we treat a case
   of particular interest for the computation of tropical varieties
   over the $p$-adic numbers (see~\cite{MR15b}).  Instead we will now use
   initially reduced standard bases to give a constructive proof that the
   Gr\"obner fan of an ${\ux}$-homogeneous ideal indeed yields a
   polyhedral fan.

   \begin{lemma}\label{lem:initGroebner}
     Let  $G$ be an initially reduced standard basis of the
     $\ux$-homogeneous ideal $I\unlhd\Rtx$ w.r.t.~a $t$-local monomial
     ordering $>$.
     Then for all $w\in \RR_{<0}\times\RR^n$ we have
     \begin{displaymath}
       \initial_w(I)=\lt_>(I) \quad \Longleftrightarrow \quad \forall g\in G: \initial_w(g)=\lt_>(g).
     \end{displaymath}
   \end{lemma}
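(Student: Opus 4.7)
I would prove the biconditional by treating each direction separately, with the substantive work lying in ``$\Leftarrow$''.

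For ``$\Leftarrow$'', assume $\initial_w(g)=\lt_>(g)$ for every $g\in G$. The inclusion $\lt_>(I)\subseteq\initial_w(I)$ is immediate, since the standard basis property gives
\begin{displaymath}
\lt_>(I)=\langle\lt_>(g)\mid g\in G\rangle=\langle\initial_w(g)\mid g\in G\rangle\subseteq\initial_w(I).
\end{displaymath}
For $\initial_w(I)\subseteq\lt_>(I)$ I would first exploit the $\ux$-homogeneity of $I$ to reduce to $\ux$-homogeneous $f\in I$: distinct $\ux$-homogeneous components of $f$ contribute terms of distinct total $\ux$-degrees and so cannot cancel in $\Rtxp$, whence $\initial_w(I)$ is generated by the elements $\initial_w(f)$ with $f\in I$ $\ux$-homogeneous. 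For such an $f$ of $\ux$-degree $d$, I would choose $G$ to be $\ux$-homogeneous and write $f=\sum_iq_ig_i$ with $\ux$-homogeneous $q_i\in\Rtx$ by extracting the $\ux$-degree $d$ part of any ideal representation (which exists without a unit multiplier since the paper records that $G$ is already a generating set). Applying $\initial_w$ to this identity yields $\initial_w(f)=\sum_{i\in S}\initial_w(q_i)\cdot\lt_>(g_i)\in\lt_>(I)$, where $S$ indexes the summands whose top $w$-weight matches that of $f$, provided no top-level cancellation occurs in $\Rtxp$.

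For ``$\Rightarrow$'', assume $\initial_w(I)=\lt_>(I)$ and fix $g_i\in G$. Write $g_i=\sum_\alpha g_{i,\alpha}\ux^\alpha$ with $g_{i,\alpha}\in\Rt$ and let $g_i'=\sum_\alpha \lt_>(g_{i,\alpha})\ux^\alpha$ be the polynomial from Definition~\ref{def:initialReduction}. Because $w_0<0$, for each fixed $\alpha$ the maximal $w$-weight along the $\ux^\alpha$-strand of $g_i$ is attained precisely at the lowest $t$-order term of $g_{i,\alpha}$, which is one of the summands of $g_i'$. Hence every term of $\initial_w(g_i)$ is a term of $g_i'$, i.e.\ either $\lt_>(g_i')=\lt_>(g_i)$ itself or a term of $\tail_>(g_i')$. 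By hypothesis $\initial_w(g_i)\in\lt_>(I)$, and since $\lt_>(I)$ is generated by terms, Lemma~\ref{lem:idealGeneratedByTerms0} places each term of $\initial_w(g_i)$ individually in $\lt_>(I)$. The initially reduced property of $G$ forbids any term of $\tail_>(g_i')$ from lying in $\lt_>(I)$, so every term of $\initial_w(g_i)$ must equal $\lt_>(g_i)$, forcing $\initial_w(g_i)=\lt_>(g_i)$.

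The main obstacle will be the top-level cancellation control in ``$\Leftarrow$''. If $\sum_{i\in S}\initial_w(q_i)\cdot\lt_>(g_i)$ vanishes in $\Rtxp$, then the naive identification of $\initial_w(f)$ with this sum breaks down. The remedy is a Buchberger-style lifting afforded by $G$ being a standard basis: any such vanishing relation among the leading terms $\lt_>(g_i)$ lifts to an $\ux$-homogeneous syzygy $\sum_ib_ig_i=0$, and replacing each $q_i$ by $q_i-b_i$ lowers the multiset of top weights $\{M_i\}_i$ without changing $f$. The hard part is verifying that this descent terminates in the mixed $t$-local, $\ux$-global setting, after which one obtains a cancellation-free representation and hence $\initial_w(f)\in\lt_>(I)$.
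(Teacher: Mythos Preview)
Your ``$\Rightarrow$'' direction matches the paper's argument essentially verbatim.

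For ``$\Leftarrow$'', the paper takes a shorter path that avoids your syzygy-descent entirely. Since $\initial_w(g)=\lt_>(g)$ forces $\lt_{>_w}(g)=\lt_>(g)$ for every $g\in G$, the standard-basis property transfers from $>$ to the weighted ordering $>_w$ (this is \cite[Cor.~2.8]{MRW15}, already invoked in the proof of Proposition~\ref{prop:finiteLeadingIdeals}). One then takes a \emph{weak} division with remainder of $f$ by $G$ with respect to $>_w$, namely $u\cdot f=\sum_iq_ig_i$ with $\lt_{>_w}(u)=1$; the division condition $\lm_{>_w}(q_ig_i)\leq\lm_{>_w}(u\cdot f)$ rules out top-$w$-weight cancellation on the right for free, and applying $\initial_w$ gives $\initial_w(u)\cdot\initial_w(f)\in\lt_>(I)$. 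Lemma~\ref{lem:idealGeneratedByTerms1} then strips off $\initial_w(u)$, whose $>$-leading term is $1$. So the trade is: you insist on a unit-free representation and pay with a cancellation-repair loop, while the paper accepts the unit $u$ and pays only with Lemma~\ref{lem:idealGeneratedByTerms1}.

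Your route is not wrong, but the gap you flag is real and needs closing. Termination of the descent does hold: for $\ux$-homogeneous $f$ of $\ux$-degree $d$, every term of each $q_ig_i$ has $w$-weight in the set $\{w_0\beta+w\cdot\alpha:\beta\in\NN,\ |\alpha|=d\}$, which meets $[\deg_w(f),\infty)$ in a finite set (finitely many $\alpha$, and $w_0<0$ bounds $\beta$ from above); since $\max_iM_i\geq\deg_w(f)$ at every stage, the strictly decreasing multiset of top weights must stabilise. You would also need to justify the syzygy lifting over the base ring $R$ rather than a field, which is available from the standard-basis theory in \cite{MRW15} but is another detour the paper's argument sidesteps.
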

   \begin{proof}
     \begin{description}[leftmargin=0.5em,font=\normalfont]
     \item[$\Rightarrow$] Let $g\in G$. Then $\initial_w(g)\in\initial_w(I)=\lt_>(I)$.
       Writing $g=\sum_{\alpha\in\NN^n} g_\alpha\cdot {\ux}^\alpha$ with
       $g_\alpha\in\Rt$, note that the only terms of $g$ which can occur
       in $\initial_w(g)$ are of the form $\lt_>(g_\alpha)\cdot {\ux}^\alpha$
       for some $\alpha\in\NN^n$.
       And since our leading ideal is naturally generated by terms, these
       terms of $\initial_w(g)$ also lie in $\lt_>(I)$\kurz{.}\lang{ by Lemma~\ref{lem:idealGeneratedByTerms0}.}
       Because $G$ is initially reduced, we see that the only term of $g$
       which can occur in $\initial_w(g)$ is $\lt_>(g)$,
       i.e. $\initial_w(g)=\lt_>(g)$.
     \item[$\Leftarrow$] It is clear that $\initial_w(I)\supseteq
       \lt_>(I)$. For the converse, it suffices to show
       $\initial_w(f)\in\lt_>(I)$ for all $f\in I$.
       For that, consider the weighted ordering $>_w$ with weight vector
       $w$ and tiebreaker $>$, and note that $G$ is also a standard basis
       w.r.t.~that ordering. Hence any $f\in I$ will have a weak
       division with remainder $0$ w.r.t.~$G$ and $>_w$:
       \begin{displaymath}
         u\cdot f=q_1\cdot g_1+\ldots+q_k\cdot g_k.
       \end{displaymath}
       The weighted monomial ordering ensures, that there is no
       cancellation of highest weighted degree terms on the right hand
       side, and that $1$ is amongst the highest weighted degree terms in
       $u$. Taking the initial form w.r.t.~$w$ on both sides
       then yields:
       \begin{align*}
         \initial_w(u)\cdot\initial_w(f)
         &=\initial_w(q_{i_1})\cdot\initial_w(g_{i_1})+\ldots+\initial_w(q_{i_l})\cdot\initial_w(g_{i_l})\\
         &=\initial_w(q_{i_1})\cdot\lt_>(g_{i_1})+\ldots+\initial_w(q_{i_l})\cdot\lt_>(g_{i_l})
         \in \lt_>(I)
       \end{align*}
       for the $1\leq i_1<\ldots<i_l\leq k$ whose terms contribute to the
       highest weighted degree. Now since $\lt_>(I)$ is generated by
       terms, any term of $\initial_w(u)\cdot\initial_w(f)$ is contained
       in it. In particular, that means $\initial_w(f)\in\lt_>(I)$\kurz{.}
       \lang{by Lemma~\ref{lem:idealGeneratedByTerms1}.}
       \qedhere
     \end{description}
   \end{proof}

   \begin{example}\label{ex:31}
     Consider the ideal
     \lang{
     \begin{displaymath}
       \langle \underbrace{x-t^3x+t^3z-t^4z}_{=:g_1},\underbrace{y-t^3y+t^2z-t^4z}_{=:g_2}\rangle\unlhd \ZZ\llbracket t\rrbracket [x,y,z]
     \end{displaymath}}
     \kurz{
     \begin{displaymath}
       \langle g_1=x-t^3x+t^3z-t^4z,g_2=y-t^3y+t^2z-t^4z\rangle\unlhd \ZZ\llbracket t\rrbracket [x,y,z]
     \end{displaymath}}
     and the weighted ordering $>=>_v$ on $\Mon(t,x,y,z)$ with weight
     vector $v=(-1,3,3,3)\in\RR_{< 0}\times\RR^3$ and $t$-local
     lexicographical ordering $x>y>z>1>t$ as tiebreaker.

     Since $g_1$ and $g_2$ already form an initially reduced standard
     basis, the set whose Euclidean closure yields $C_>(I)$ is, due to
     Lemma~\ref{lem:initGroebner} given by
     \lang{\begin{align*}
       &\{w\in\RR_{< 0}\times\RR^3 \mid \initial_w(I)=\lt_>(I) \} = \\
       &\qquad \{w\in\RR_{< 0}\times\RR^3 \mid \initial_w(g_1)=\lt_>(g_1)=x \text{ and } \initial_w(g_2)=\lt_>(g_2)=y \}.
     \end{align*}}
   \kurz{\begin{displaymath}
        \{w\in\RR_{< 0}\times\RR^3 \mid \initial_w(g_1)=x, \initial_w(g_2)=y \}.
     \end{displaymath}}
     Hence it is cut out by the following two systems of inequalities:
     \begin{displaymath}
       \initial_w(g_1) = x \;\; \Longleftrightarrow \;\;
       \begin{cases}
         \deg_w(x)>\deg_w(t^3x) \\
         \deg_w(x)>\deg_w(t^3z) \\
         \deg_w(x)>\deg_w(t^4z)
       \end{cases}
       \Longleftrightarrow \;\;
       \begin{cases}
         0>w_0 & (\ast) \\
         w_1>3w_0+w_3 \\
         w_1>4w_0+w_3  & (\ast)
       \end{cases}
     \end{displaymath}
     and
     \begin{align*}
       \initial_w(g_2) = y \;\; \Longleftrightarrow \;\;
       \begin{cases}
         \deg_w(y)>\deg_w(t^3y) \\
         \deg_w(y)>\deg_w(t^2z) \\
         \deg_w(y)>\deg_w(t^4z)
       \end{cases}
       \Longleftrightarrow \;\;
       \begin{cases}
         0>w_0 & (\ast) \\
         w_2>2w_0+w_3 \\
         w_2>4w_0+w_3. & (\ast)
       \end{cases}
     \end{align*}

     The inequalities marked with $(\ast)$ are redundant, which is why
     the terms from which they arise are ignored in the definition of initial
     reducedness. Figure~\ref{fig:example31} shows an image in which we
     restrict ourselves to the affine subspace $\{w_0=-1, w_3=1\}$. Because
     the set is invariant under translation by $(0,1,1,1)$, no information is
     lost by doing so.

     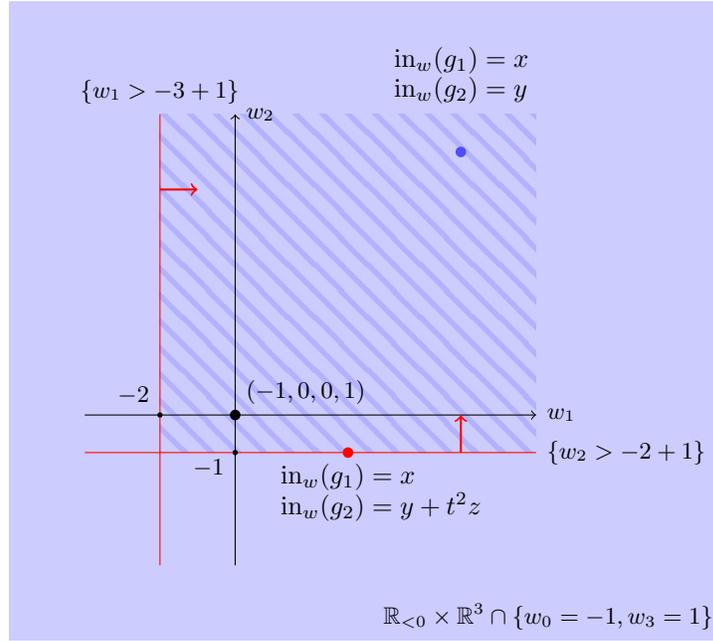
\begin{figure}[h]
       \centering
       \begin{tikzpicture}
         \fill[color=blue!20] (-3,-3) rectangle (6.5,5.5);
         \fill[pattern=my north west lines,pattern color=blue!30, line space=10pt,pattern line width=2pt] (-1,-0.5) rectangle (4,4);
         \draw[->] (-2,0) -- (4,0) node [anchor=west, font=\scriptsize] {$w_1$};
         \draw[->] (0,-2) -- (0,4) node [anchor=west, font=\scriptsize] {$w_2$};
         \fill (0,0) circle (2pt);
         \node[anchor=south west,font=\scriptsize] at (0,0) {$(-1,0,0,1)$};

         \draw[red] (-1,-2) -- (-1,4) node [black, font=\scriptsize, anchor=south] {$\{w_1>-3+1\}$};
         \draw[thick,red,->] (-1,3) -- (-0.5,3);
         \fill (-1,0) circle (1pt);
         \node[anchor=south east,font=\scriptsize] at (-1,0) {$-2$};

         \draw[red] (-2,-0.5) -- (4,-0.5) node [black, font=\scriptsize, anchor=west] {$\{w_2>-2+1\}$};
         \fill (0,-0.5) circle (1pt);
         \node[anchor=north east, font=\scriptsize, yshift=0.05cm] at (0,-0.5) {$-1$};
         \draw[thick,red,->] (3,-0.5) -- (3,0);

         \fill[red] (1.5,-0.5) circle (2pt);
         \node[anchor=north, font=\footnotesize] (expl) at (1.5,-0.5) {$\initial_w(g_1)=x$};
         \node[anchor=west, font=\footnotesize, yshift=-0.4cm] at (expl.west) {$\initial_w(g_2)=y+t^2z$};

         \fill[blue!70] (3,3.5) circle (2pt);
         \node [anchor=south, font=\footnotesize] (explTop) at (3,4) {$\initial_w(g_2)=y$};
         \node [anchor=west, font=\footnotesize, yshift=0.4cm] at (explTop.west) {$\initial_w(g_1)=x$};

         \node[anchor=south east,font=\scriptsize] at (6.5,-3) {$\RR_{< 0}\times\RR^3\cap\{w_0=-1,w_3=1\}$};
       \end{tikzpicture}
       \caption{$C_>(I)$ having the structure of a polyhedral cone}
       \label{fig:example31}
     \end{figure}

     Also note that while the weight vectors on the Euclidean boundary
     may not induce initial forms of $g_1$ and $g_2$ coinciding to the
     leading terms, the initial forms still contain the leading
     terms. \lang{This is a direct consequence of our last lemma.}
   \end{example}

   \begin{lemma}\label{lem:Cmembership}
     Let  $G$ be an initially reduced standard basis of the
     $\ux$-homogeneous ideal $I\unlhd\Rtx$ w.r.t.~a $t$-local monomial
     ordering $>$.
     Then for all $w\in
     \RR_{< 0}\times\RR^n$ we have
     \begin{displaymath}
       w\in C_>(I) \quad \Longleftrightarrow \quad \forall g\in G: \lt_>(\initial_w(g)) = \lt_>(g).
     \end{displaymath}
   \end{lemma}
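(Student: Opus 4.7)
The plan splits the equivalence into the two standard implications, using Lemma~\ref{lem:initGroebner} as the bridge between initial ideals and the leading ideal.

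For $(\Rightarrow)$, pick a sequence $(v_k)_{k\in\NN}$ in $\RR_{<0}\times\RR^n$ converging to $w$ with $\initial_{v_k}(I)=\lt_>(I)$; such a sequence exists by the definition of $C_>(I)$ as a closure. Lemma~\ref{lem:initGroebner} yields $\initial_{v_k}(g)=\lt_>(g)$ for every $g\in G$ and every $k$, meaning that $\lt_>(g)$ is $v_k$-strictly heavier than every other term of $g$. Passing to the limit $k\to\infty$, $\lt_>(g)$ retains $w$-weight at least as large as every other term of $g$, so $\lt_>(g)$ appears as a summand of $\initial_w(g)$. Since $\initial_w(g)$ is a subsum of $g$ that contains the $>$-leading term of all of $g$, that same term is automatically the $>$-leading term of $\initial_w(g)$, proving $\lt_>(\initial_w(g))=\lt_>(g)$.

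For $(\Leftarrow)$, I will exhibit weight vectors arbitrarily close to $w$ at which the initial ideal equals $\lt_>(I)$. Let $M$ be the finite set of monomials appearing in some $\initial_w(g)$ with $g\in G$. Since $\lt_>(\initial_w(g))=\lt_>(g)$ for every $g$, the term $\lt_>(g)$ is the $>$-distinguished one among the monomials of $\initial_w(g)$. Using the classical fact that any monomial ordering is representable by a weight vector on a finite set of monomials, pick $u\in\RR^{n+1}$ satisfying $u\cdot(\beta,\alpha)>u\cdot(\delta,\gamma)$ whenever $t^\beta\ux^\alpha,t^\delta\ux^\gamma\in M$ with $t^\beta\ux^\alpha>t^\delta\ux^\gamma$. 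Set $w_\epsilon:=w+\epsilon u$. For small $\epsilon>0$, $w_\epsilon$ still has strictly negative $0$th coordinate, and for each $g\in G$ the terms in $\initial_w(g)\setminus\{\lt_>(g)\}$ are $w$-tied with $\lt_>(g)$ but $u$-strictly lighter by construction, while the terms of $g$ outside $\initial_w(g)$ are $w$-strictly lighter than $\lt_>(g)$. Hence $\initial_{w_\epsilon}(g)=\lt_>(g)$ for every $g\in G$, and Lemma~\ref{lem:initGroebner} gives $\initial_{w_\epsilon}(I)=\lt_>(I)$. Letting $\epsilon\to 0$ yields $w\in C_>(I)$.

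The main technical obstacle lies in the reverse direction: each $g\in G$ is a power series in $t$ rather than a polynomial, so it has infinitely many terms outside $\initial_w(g)$, and the choice of $\epsilon$ must dominate all of them uniformly. This is handled by observing that $w_0<0$ forces the $w$-weights of the $t$-tails of each coefficient $g_\alpha$ to tend to $-\infty$, so only finitely many terms fall within any prescribed ``danger zone'' below the $w$-maximum; combined with the finiteness of $G$, a uniform positive $\epsilon$ can then be chosen as the minimum of finitely many positive thresholds.
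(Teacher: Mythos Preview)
Your argument is essentially correct and takes a genuinely different route from the paper. The paper argues polyhedrally: Lemma~\ref{lem:initGroebner} cuts out $\{v:\initial_v(I)=\lt_>(I)\}$ by the strict inequalities $\deg_v(\lt_>(g_i))>\deg_v(\tail_>(g_i))$, observes that only finitely many of these are non-redundant (higher powers of $t$ give weaker constraints since $v_0<0$), and then the closure is obtained by replacing $>$ with $\geq$, which is exactly the condition $\lt_>(\initial_w(g))=\lt_>(g)$. You instead argue topologically, using sequences for $(\Rightarrow)$ and a weight-vector perturbation for $(\Leftarrow)$. The paper's approach has the advantage of immediately exhibiting the polyhedral structure needed in Proposition~\ref{prop:groebnerConeDecomposition}; yours is more self-contained but forgoes that byproduct.

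There is, however, a gap in your final paragraph. Your ``danger zone'' argument explains why only finitely many terms of $g$ have $w$-weight close to the maximum, and you choose $\epsilon$ as a minimum over thresholds for those terms. But you never say why the infinitely many terms \emph{outside} the danger zone remain $w_\epsilon$-dominated by $\lt_>(g)$. If $u_0>0$, a term $t^\beta\ux^\alpha$ with very large $\beta$ has $w$-weight far below the maximum but $u$-weight arbitrarily large, so $\epsilon u$ could lift it above $\lt_>(g)$; a ``minimum of finitely many thresholds'' does not control this. The clean fix is precisely the redundancy observation the paper uses: once $(w_\epsilon)_0<0$ (which you already ensured), every term $t^\beta\ux^\alpha$ with $\beta>\beta_\alpha:=\min\{\beta':c_{\alpha,\beta'}\neq 0\}$ has strictly smaller $w_\epsilon$-weight than $t^{\beta_\alpha}\ux^\alpha$. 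Hence for \emph{any} weight with negative $t$-coordinate the only candidates for the initial form of $g$ lie in the finite set $\{t^{\beta_\alpha}\ux^\alpha:\alpha\text{ in the }\ux\text{-support of }g\}$, and your perturbation argument then goes through verbatim on that finite set.
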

   \begin{proof}
     Suppose $G=\{g_1,\ldots,g_k\}$. Similar to Example~\ref{ex:31},
     Lemma~\ref{lem:initGroebner} implies that the set $\{w\in \RR_{<
       0}\times\RR^n\mid \initial_w(I)=\lt_>(I)\}$ is cut out by a system
     of strict inequalities of the form:
     \lang{\begin{displaymath}
       \begin{array}{rcl}
         \deg_w(\lt_>(g_1)) &>& \deg_w(\tail_>(g_1)), \\
         \deg_w(\lt_>(g_2)) &>& \deg_w(\tail_>(g_2)), \\
         &\vdots& \\
         \deg_w(\lt_>(g_k)) &>& \deg_w(\tail_>(g_k)). \\
       \end{array}
     \end{displaymath}}
     \kurz{\begin{displaymath}
       \deg_w(\lt_>(g_i)) > \deg_w(\tail_>(g_i)),\;\;\; i=1,\ldots,k.
     \end{displaymath}}
     Note that each line, despite $g_i\in\Rtx$, only yields a finite
     amount of minimal inequalities, since higher degrees of $t$ yield
     redundant inequalities.
     Therefore, its Euclidean closure $C_>(I)$ is given by a system of inequalities of the form
     \lang{\begin{displaymath}
       \begin{array}{rcl}
         \deg_w(\lt_>(g_1)) &\geq& \deg_w(\tail_>(g_1)) \\
         \deg_w(\lt_>(g_2)) &\geq& \deg_w(\tail_>(g_2)) \\
         &\vdots& \\
         \deg_w(\lt_>(g_k)) &\geq& \deg_w(\tail_>(g_k)) \\
       \end{array}
     \end{displaymath}}
     \kurz{\begin{displaymath}
       \deg_w(\lt_>(g_i)) \geq \deg_w(\tail_>(g_i)),\;\;\; i=1,\ldots,k,
     \end{displaymath}}
     which is equivalent to $\lt_>(g_i)$ occuring in $\initial_w(g)$ and translates to the condition in the claim.
   \end{proof}

   We can use this result to generalise the statement of Lemma~\ref{lem:initGroebner}
   to weight vectors in the boundary of $C_>(I)$.

   \begin{lemma}\label{lem:LeadOfInitialIdeal}
     Let $>$ be a $ t$-local monomial ordering and let $I\unlhd\Rtx$
     be an $\ux$-homogeneous ideal. Then for all $w\in C_>(I)$, $w\in \RR_{<0}\times\RR^n$, we have
     \begin{displaymath}
       \lt_>(\initial_w(I)) = \lt_>(I).
     \end{displaymath}
   \end{lemma}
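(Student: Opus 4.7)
The plan is to pick an initially reduced standard basis $G=\{g_1,\ldots,g_k\}$ of $I$ with respect to $>$ (its existence is guaranteed by Proposition~\ref{prop:existenceinitiallyreducedstb}) and to prove the two inclusions in $\lt_>(\initial_w(I))=\lt_>(I)$ separately, closely following the template of Lemma~\ref{lem:initGroebner}.

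The inclusion $\lt_>(I)\subseteq\lt_>(\initial_w(I))$ will be immediate from Lemma~\ref{lem:Cmembership}: since $w\in C_>(I)$, we have $\lt_>(g_i)=\lt_>(\initial_w(g_i))$ for every $i$, and as $\initial_w(g_i)\in\initial_w(I)$ each $\lt_>(g_i)$ lies in $\lt_>(\initial_w(I))$. Because $G$ is a standard basis with respect to $>$, $\lt_>(I)=\langle\lt_>(g_i)\rangle$ is therefore contained in $\lt_>(\initial_w(I))$.

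For the reverse inclusion, the key claim is that $\{\initial_w(g_1),\ldots,\initial_w(g_k)\}$ forms a standard basis of $\initial_w(I)$ with respect to $>$. Once this is in hand, Lemma~\ref{lem:Cmembership} immediately yields
\begin{displaymath}
  \lt_>(\initial_w(I))=\langle\lt_>(\initial_w(g_i))\rangle=\langle\lt_>(g_i)\rangle=\lt_>(I).
\end{displaymath}
To prove the claim I would mirror the second direction of the proof of Lemma~\ref{lem:initGroebner}. Take $f\in\initial_w(I)$, expand $f=\sum_jp_j\initial_w(h_j)$ with $h_j\in I$ and $p_j\in R[t,\ux]$, and apply to each $h_j$ a weak division with remainder zero from \cite{MRW15} against $G$ with respect to the weighted ordering $>_w$, obtaining $u_jh_j=\sum_iq_{i,j}g_i$ with $1$ among the highest weighted degree terms of $u_j$ and no cancellation of highest weighted degree terms on the right. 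Passing to initial forms with respect to $w$ then yields
\begin{displaymath}
  \initial_w(u_j)\cdot\initial_w(h_j)=\sum_{i\in S_j}\initial_w(q_{i,j})\cdot\initial_w(g_i)\in\langle\initial_w(g_i)\rangle,
\end{displaymath}
so $\prod_j\initial_w(u_j)\cdot f\in\langle\initial_w(g_i)\rangle$. Invoking Lemma~\ref{lem:idealGeneratedByTerms1} on the term-ideal $\langle\lt_>(\initial_w(g_i))\rangle$, and exploiting that $\lt_>$ of each $\initial_w(u_j)$ is $1$ (since $1$ lies amongst its terms and is $>$-minimal among them), one then extracts $\lt_>(f)\in\langle\lt_>(\initial_w(g_i))\rangle$, which is precisely the standard basis property.

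The main obstacle, compared with Lemma~\ref{lem:initGroebner}, is that on the boundary of $C_>(I)$ the identification $\initial_w(g_i)=\lt_>(g_i)$ no longer holds, and $\initial_w(I)$ may strictly exceed $\lt_>(I)$ as ideals, as witnessed by Example~\ref{ex:intersectionHomogeneitySpace}. The delicate step is therefore the transition from ideal membership in the not-necessarily-term-generated ideal $\langle\initial_w(g_i)\rangle$ to leading ideal membership in the term-ideal $\langle\lt_>(\initial_w(g_i))\rangle$, which requires applying Lemma~\ref{lem:idealGeneratedByTerms1} to this intermediate ideal rather than to $\lt_>(I)$ directly, and afterwards identifying the two via Lemma~\ref{lem:Cmembership}.
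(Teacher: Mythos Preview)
Your argument has a genuine gap at the final step. You arrive at
\[
  U\cdot f\in\langle\initial_w(g_1),\ldots,\initial_w(g_k)\rangle,\qquad U:=\prod_j\initial_w(u_j),\ \ \lt_>(U)=1,
\]
and then invoke Lemma~\ref{lem:idealGeneratedByTerms1} ``on the term-ideal $\langle\lt_>(\initial_w(g_i))\rangle$'' to extract $\lt_>(f)\in\langle\lt_>(\initial_w(g_i))\rangle$. But Lemma~\ref{lem:idealGeneratedByTerms1} requires $U\cdot f$ to lie in the \emph{term} ideal. What you actually have is $U\cdot f\in\langle\initial_w(g_i)\rangle$, and on the boundary of $C_>(I)$ the $\initial_w(g_i)$ are typically not single terms (this is precisely the ``main obstacle'' you flag yourself). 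Passing from membership in $\langle\initial_w(g_i)\rangle$ to leading-term membership in $\langle\lt_>(\initial_w(g_i))\rangle$ is exactly the standard-basis property of $\{\initial_w(g_i)\}$ that you are trying to prove, so the argument is circular at this point. (A small aside: $\lt_>(\initial_w(u_j))=1$ because $1$ is the $>$-\emph{maximal} term of $\initial_w(u_j)$, not the minimal one.)

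The paper's proof avoids this trap by not aiming for the standard-basis claim directly. It treats a single $f\in I$, takes the weak division $u\cdot f=\sum_i q_ig_i$ with respect to $>_w$, and uses the crucial extra information that this division provides beyond mere ideal membership: the bound $\lm_{>_w}(uf)\geq\lm_{>_w}(q_ig_i)$ for every $i$. After applying $\initial_w$, all surviving summands have the same $w$-degree, so $>_w$ and $>$ agree on them and the bound becomes $\lm_>\big(\initial_w(u)\,\initial_w(f)\big)\geq\lm_>\big(\initial_w(q_i)\,\initial_w(g_i)\big)$. This lets one read off $\lt_>(\initial_w(f))$ directly as a combination of the $\lt_>(\initial_w(g_i))$, with no appeal to any standard-basis property and no need for Lemma~\ref{lem:idealGeneratedByTerms1} at this stage. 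Combined with Lemma~\ref{lem:Cmembership} this gives $\lt_>(\initial_w(f))\in\lt_>(I)$, which suffices since $\initial_w(I)$ is $w$-graded and its $w$-homogeneous elements are exactly the $\initial_w(f)$ for $f\in I$. Your approach loses the leading-monomial bounds when you pass to the combination $f=\sum_jp_j\initial_w(h_j)$ and then only record the ideal containment $Uf\in\langle\initial_w(g_i)\rangle$.
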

   \begin{proof} Let $G$ be an initially reduced standard basis of $I$
     w.r.t.~$>$.
     Since $\lt_>(\initial_w(g))=\lt_>(g)$ for all $g\in G$ by Lemma~\ref{lem:Cmembership},
     we have
     \begin{displaymath}
       \lt_>(I) = \langle \lt_>(g) \mid g\in G \rangle
       \underset{\ref{lem:Cmembership}}{\overset{\text{Lem.}}{=}}
       \langle \lt_>(\initial_w(g)) \mid g\in G \rangle
       \subseteq \lt_>(\initial_w(I)).
     \end{displaymath}
     For the opposite inclusion, we can again consider the weighted
     ordering $>_w$. Given any $h\in \initial_w(I)$ with $h=\initial_w(f)$ for
     some $f\in I$, this $f$ has a weak division with remainder $0$
     w.r.t.~$G=\{g_1,\ldots,g_k\}$ under $>_w$:
     \begin{displaymath}
       u\cdot f=q_1\cdot g_1+\ldots+q_k\cdot g_k.
     \end{displaymath}
     Because no cancellation of highest weighted degree terms occurs on
     the right, taking the initial forms on both sides yields:
     \begin{displaymath}
       \initial_w(u)\cdot\initial_w(f)
       =\initial_w(q_{i_1})\cdot\initial_w(g_{i_1})+\ldots+\initial_w(q_{i_l})\cdot\initial_w(g_{i_l})
     \end{displaymath}
     for the $1\leq i_1<\ldots<i_l\leq k$ whose terms contribute to the
     highest weighted degree. Moreover,
     $\lt_>(\initial_w(u))=\lt_{>_w}(u)=1$. Therefore taking the leading
     terms on both sides produces:
     \begin{align*}
       \lt_>(\initial_w(f))
       &\underset{\phantom{\ref{lem:Cmembership}}}{\overset{\phantom{\text{Lem.}}}{=}}
       q_{i_1}'\cdot\lt_>(\initial_w(g_{i_1}))+\ldots
       +q_{i_l}'\cdot\lt_>(\initial_w(g_{i_l})) \\
       &\underset{\ref{lem:Cmembership}}{\overset{\text{Lem.}}{=}}
       q_{i_1}'\cdot\lt_>(g_{i_1})+\ldots
       +q_{i_l}'\cdot\lt_>(g_{i_l}) \in \lt_>(I),
     \end{align*}
     where we abbreviated $q_{i_j}':=\lt_>(\initial_w(q_{i_j}))$ for $j=1,\ldots,l$.
   \end{proof}

   Combining the previous lemmata we deduce how initially reduced
   standard bases of restrict to initially reduced standard bases
   of initial ideals.

   \begin{proposition}\label{prop:BasisOfInitialIdeal}
     Let  $G$ be an initially reduced standard basis of the
     $\ux$-homogeneous ideal $I\unlhd\Rtx$ w.r.t.~a $t$-local monomial
     ordering $>$.
     Then for all $w\in C_>(I)$ with $w_0<0$ the set
     \begin{displaymath}
       H:=\{\initial_w(g)\mid g\in G\}
     \end{displaymath}
     is an initially reduced standard basis of $\initial_w(I)$ w.r.t.~the same ordering.
   \end{proposition}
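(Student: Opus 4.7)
The plan is to check, in sequence, that $H$ lies in $\initial_w(I)$ and has the correct leading ideal, that $H$ is minimal, and finally that $H$ is initially reduced w.r.t.\ itself. The containment $H\subseteq\initial_w(I)$ is immediate. By Lemma~\ref{lem:Cmembership} the leading terms of the elements of $H$ agree with those of $G$, and combining this with Lemma~\ref{lem:LeadOfInitialIdeal} yields
\begin{displaymath}
\langle\lt_>(h)\mid h\in H\rangle \;=\; \langle\lt_>(g)\mid g\in G\rangle \;=\; \lt_>(I) \;=\; \lt_>(\initial_w(I)),
\end{displaymath}
so $H$ is a standard basis of $\initial_w(I)$. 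The coincidence of leading terms also transfers minimality from $G$ to $H$ verbatim.

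The heart of the argument is the initial reducedness condition. Writing $g=\sum_\alpha g_\alpha\cdot\ux^\alpha$ with $g_\alpha\in\Rt$ for $g\in G$, I would first establish the structural observation
\begin{displaymath}
\initial_w(g) \;=\; \sum_{\alpha\in A_g}\lt_>(g_\alpha)\cdot\ux^\alpha
\end{displaymath}
for a suitable subset $A_g\subseteq\NN^n$. For each fixed $\alpha$, the assumption $w_0<0$ forces the $w$-maximal exponent $\beta$ among those appearing in $g_\alpha$ to be the smallest such one, which by $t$-locality of $>$ is precisely the exponent of $\lt_>(g_\alpha)$; globally, only those $\alpha$ survive in $\initial_w(g)$ whose contribution $w\cdot(\beta_\alpha,\alpha)$ attains the maximum.

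This identifies $\initial_w(g)$ as a partial sum of $g':=\sum_\alpha\lt_>(g_\alpha)\cdot\ux^\alpha$. Since $\lt_>(g')=\lt_>(g)$ and Lemma~\ref{lem:Cmembership} gives $\lt_>(\initial_w(g))=\lt_>(g)$, the tail $\tail_>(\initial_w(g))$ is a partial sum of $\tail_>(g')$, and by initial reducedness of $G$ it therefore contains no term in $\lt_>(I)=\lt_>(H)$. To close the argument I would note that in Definition~\ref{def:initialReduction} applied to $h=\initial_w(g)$, each $\Rt$-coefficient of $h$ is already a single term, so the associated polynomial $h'$ coincides with $h$ itself and the initial reducedness of $H$ reduces exactly to the tail statement just verified. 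The only conceptual hurdle is the structural observation of paragraph two; everything else amounts to a routine transfer of the corresponding property of $G$ using the two previous lemmata.
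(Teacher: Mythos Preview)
Your proposal is correct and follows essentially the same route as the paper: both invoke Lemma~\ref{lem:Cmembership} and Lemma~\ref{lem:LeadOfInitialIdeal} to get the leading-ideal chain
\[
\lt_>(\initial_w(I)) = \lt_>(I) = \langle\lt_>(g)\mid g\in G\rangle = \langle\lt_>(\initial_w(g))\mid g\in G\rangle,
\]
establishing that $H$ is a standard basis. The paper then dispatches initial reducedness in a single sentence (``because $G$ was initially reduced, so is $H$''), whereas you spell out precisely what underlies that sentence: the structural identity $\initial_w(g)=\sum_{\alpha\in A_g}\lt_>(g_\alpha)\cdot\ux^\alpha$, the consequence that $\tail_>(\initial_w(g))$ is a subsum of $\tail_>(g')$, and the observation that $h'=h$ for each $h\in H$. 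You also make minimality explicit, which the paper leaves implicit. So your argument is the paper's argument, with the details unpacked.
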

   \begin{proof}
     By the previous Lemmata, we have
     \begin{displaymath}
       \lt_>(\initial_w(I)) \underset{\ref{lem:LeadOfInitialIdeal}}{\overset{\text{Lem.}}{=}} \lt_>(I)
       = \langle\lt_>(g)\mid g\in G\rangle
       \underset{\ref{lem:Cmembership}}{\overset{\text{Lem.}}{=}}
       \langle\lt_>(\initial_w(g))\mid g\in G \rangle,
     \end{displaymath}
     and therefore $H$ is a standard basis of $\initial_w(I)$.
     Moreover, because $G$ was initially reduced, so is $H$.
   \end{proof}

   \begin{example}\label{ex:32}
     Given the same ideal and ordering as in Example~\ref{ex:31}, $g_1\lang{=
     x-t^3x+t^3z-t^4z}$ and $g_2\lang{=y-t^3y+t^2z-t^4z}$ form an initially
     reduced standard basis.
     Because $w:=(-1,2,-1,1)\in C_>(I)$,
     Proposition~\ref{prop:BasisOfInitialIdeal} implies that the
     initial ideal $\initial_w(I)$ has the initially reduced standard bases
     \lang{\begin{displaymath}
       \{\initial_w(g_1), \initial_w(g_2)\} = \{ x, y+t^2z \}.
     \end{displaymath}}
     \kurz{$ \{\initial_w(g_1), \initial_w(g_2)\} = \{ x, y+t^2z \}$.}
     As we go over all weight vectors in $C_>(I)$ in the affine subspace,
     we obtain four distinct initial ideals as illustrated in Figure~\ref{fig:example32}.
     \begin{figure}[h]
       \centering
       \begin{tikzpicture}
         \fill[color=blue!20] (-4.5,-3) rectangle (5,5);
         \fill[pattern=my north west lines,pattern color=blue!30, line space=10pt,pattern line width=2pt] (-1,-0.5) rectangle (4,4);
         \draw[->] (-2,0) -- (4,0) node [anchor=west, font=\scriptsize] {$w_1$};
         \draw[->] (0,-2) -- (0,4) node [anchor=west, font=\scriptsize] {$w_2$};

         \draw[red] (-1,-0.5) -- (-1,4);
         \fill (-1,0) circle (1pt);
         \node[anchor=south east,font=\scriptsize] at (-1,0) {$-2$};

         \draw[red] (-1,-0.5) -- (4,-0.5);
         \fill (0,-0.5) circle (1pt);
         \node[anchor=north east, font=\scriptsize, yshift=0.05cm] at (0,-0.5) {$-1$};

         \fill[red] (-1,-0.5) circle (2pt);

         \node[font=\small] at (1.5,1.5) {$\{x,y\}$};
         \node[anchor=north, font=\small] at (1.5,-0.5) {$\{x,y+t^2z\}$};
         \node[anchor=east, font=\small] at (-1,1.5) {$\{x+t^3z,y\}$};
         \node[anchor=north east, font=\small] at (-1,-0.5) {$\{x+t^3z,y+t^2z\}$};

         \fill (1,-0.5) circle (2pt);
         \node[anchor=south, font=\scriptsize] at (1,-0.5) {$(-1,2,-1,1)$};

         \node[anchor=south east,font=\scriptsize] at (5,-3) {$\RR_{< 0}\times\RR^3\cap\{w_0=-1,w_3=1\}$};
       \end{tikzpicture}
       \caption{standard bases of initial ideals with various weights}
       \label{fig:example32}
     \end{figure}
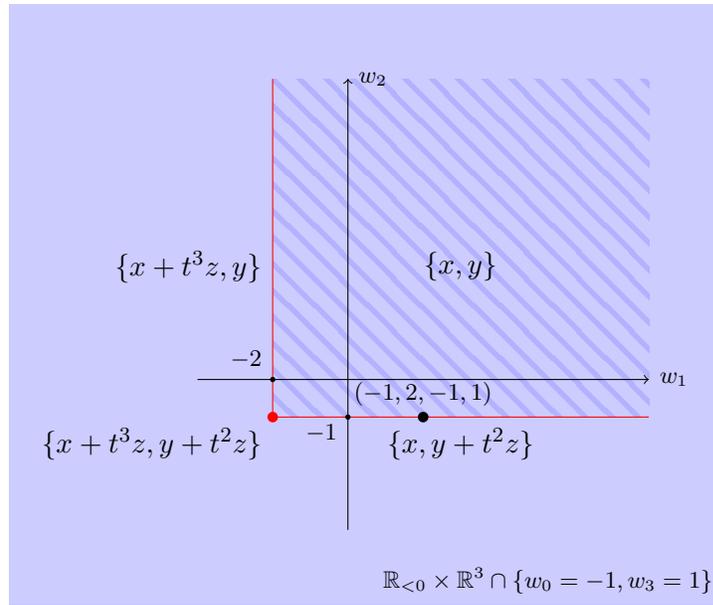
   \end{example}

   \lang{The finiteness of distinct initial ideals holds in general and it can be stated as an easy corollary.}

   \begin{corollary}\label{cor:finiteInitialIdeals}
     Any $\ux$-homogeneous ideal $I\unlhd\Rtx$ has only finitely many
     distinct initial ideals. In particular, $I$ has  only finitely
     many Gr\"obner cones.
   \end{corollary}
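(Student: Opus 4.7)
The plan is to bound the number of initial ideals using Proposition~\ref{prop:finiteLeadingIdeals} combined with the description of initial ideals from Proposition~\ref{prop:BasisOfInitialIdeal}. The pivotal observation is that $C_>(I)$ depends on $>$ only through the leading ideal $\lt_>(I)$, which is immediate from its definition as the closure of $\{w\mid \initial_w(I)=\lt_>(I)\}$. Since there are only finitely many leading ideals $J_1,\ldots,J_N$ by Proposition~\ref{prop:finiteLeadingIdeals}, there are correspondingly only finitely many distinct cones $C_1,\ldots,C_N$ arising as $C_>(I)$. These cover $\RR_{<0}\times\RR^n$: for any $w$ in this half space, refining $w$ to a $t$-local ordering $>_w$ via any fixed tiebreaker places $w$ in $C_{>_w}(I)$ by Lemma~\ref{lem:Cmembership}, and $C_{>_w}(I)=C_i$ whenever $\lt_{>_w}(I)=J_i$.

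For each $i$, I would fix an initially reduced standard basis $G_i$ of $I$ w.r.t.\ some representative ordering $>_i$ with $\lt_{>_i}(I)=J_i$ (existing by Proposition~\ref{prop:existenceinitiallyreducedstb}). Proposition~\ref{prop:BasisOfInitialIdeal} then provides, for every $w\in C_i\cap(\RR_{<0}\times\RR^n)$,
\[
  \initial_w(I)=\langle \initial_w(g) \mid g\in G_i\rangle.
\]
Each element $g\in G_i$ has only finitely many distinct initial forms $\initial_w(g)$ as $w$ varies. Indeed, $g\in\Rtx$ has finite $\ux$-support, and when $w_0<0$ only the minimum-$t$-degree term of each coefficient $g_\alpha\in\Rt$ can contribute to $\initial_w(g)$, since higher $t$-powers yield strictly smaller $w$-weight. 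Hence $\initial_w(g)$ is one of only finitely many polynomials, indexed by subsets of the finite $\ux$-support of $g$. Combining over the finitely many cones $C_i$ and elements of $G_i$ yields only finitely many distinct initial ideals.

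For the ``in particular'' part, by definition the interior Gr\"obner cones $C_w(I)$ are in bijection with the distinct initial ideals $\initial_w(I)$, so there are finitely many of them; the boundary Gr\"obner cones $C_w^0(I)=C_w(I)\cap(\{0\}\times\RR^n)$ are then determined by the interior ones and are likewise finite in number.

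The main obstacle in executing this plan is that a general element of $G_i$ may be a genuine power series in $t$ with infinite support, so a priori one might fear that it could contribute infinitely many initial forms. This is circumvented by the $t$-localness of the ordering: because $w_0<0$, the $w$-weight of $t^\beta\ux^\alpha$ strictly decreases in $\beta$ for fixed $\alpha$, so only the lowest occurring $t$-power in each $\ux$-coefficient can enter $\initial_w(g)$, reducing the count of initial forms to a finite combinatorial choice.
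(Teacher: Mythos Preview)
Your proposal is correct and follows essentially the same approach as the paper: cover $\RR_{<0}\times\RR^n$ by the finitely many cones $C_>(I)$ using Proposition~\ref{prop:finiteLeadingIdeals} and Lemma~\ref{lem:Cmembership}, then on each cone invoke Proposition~\ref{prop:BasisOfInitialIdeal} and observe that each element of a fixed initially reduced standard basis admits only finitely many initial forms because only the minimal-$t$-degree term of each $\ux$-coefficient can appear. The only minor imprecision is the word ``bijection'' in the final paragraph: you only need (and only obviously have) a surjection from initial ideals onto interior Gr\"obner cones, which suffices for the finiteness claim.
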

   \begin{proof}
     Note first that due to Lemma~\ref{lem:Cmembership} every weight
     vector $w$ is contained in $C_>(I)$ for some $t$-local monomial
     ordering $>$, just choose any refinement $>_w$ of the partial
     ordering induced by $w$. By
     Proposition~\ref{prop:BasisOfInitialIdeal} the initial ideal
     $\initial_w(I)$ is determined by any initially reduced standard
     basis of $I$ w.r.t.~$>$. Since by
     Proposition~\ref{prop:finiteLeadingIdeals} there are only
     finitely many distinct $C_>(I)$, it suffices to argue why a fixed
     $C_>(I)$ can only lead to finitely many distinct initial ideals
     $\initial_w(I)$, since this implies that there are only finitely
     many $C_w(I)$ and hence only finitely many $C_w^0(I)$.

     To this end note that an arbitrary element $g=\sum_{\alpha\in\NN^n} g_\alpha
     {\ux}^\alpha\in\Rtx$ with $g_\alpha\in\Rt$ has only finitely many
     distinct initial forms.
     Consider a weight vector $w\in\RR_{<
       0}\times\RR^n$, and let $>$ be a $t$-local monomial ordering. The
     initial forms of $g$ w.r.t.~$>$ are of the form
     \begin{displaymath}
       \initial_w(g) = \sum_{\alpha\in\Lambda} \lt_>(g_\alpha) \cdot {\ux}^\alpha
     \end{displaymath}
     for a finite set $\Lambda\subseteq\{\alpha\in\NN^n\mid
     g_\alpha\neq 0\}$.
     Thus a fixed initially reduced standard basis of $I$ w.r.t.~$>$
     admits by Proposition~\ref{prop:BasisOfInitialIdeal} only
     finitely many choices for generating sets of initial ideals
     $\initial_w(I)$ and hence only finitely many initial ideals.
   \end{proof}

   The next proposition allows us to read off the inequalities and
   equations of the Gr\"obner cones, from which we can derive the
   remaining properties needed to show that they form a polyhedral fan.

   \begin{proposition}\label{prop:initialEquality}
     Let  $G$ be an initially reduced standard basis of the
     $\ux$-homogeneous ideal $I\unlhd\Rtx$ w.r.t.~a $t$-local monomial
     ordering $>$ and let $w\in C_>(I)$ with $w_0<0$.
     Then for all $v\in \RR_{< 0}\times\RR^n$
     we have
     \begin{displaymath}
       \initial_v(I)=\initial_w(I) \quad \Longleftrightarrow \quad \forall g\in G: \initial_v(g)=\initial_w(g).
     \end{displaymath}
   \end{proposition}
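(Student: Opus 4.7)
The plan is to prove both implications by applying Lemma~\ref{lem:Cmembership}, Lemma~\ref{lem:LeadOfInitialIdeal}, and Proposition~\ref{prop:BasisOfInitialIdeal}, together with the initial reducedness of $G$. For the easier direction ``$\Leftarrow$'', assume $\initial_v(g) = \initial_w(g)$ for every $g \in G$. Since $w\in C_>(I)$, Lemma~\ref{lem:Cmembership} gives $\lt_>(\initial_w(g)) = \lt_>(g)$, so $\lt_>(\initial_v(g)) = \lt_>(g)$; Lemma~\ref{lem:Cmembership} applied to $v$ (using $v_0<0$) then yields $v \in C_>(I)$. Proposition~\ref{prop:BasisOfInitialIdeal} supplies initially reduced standard bases $\{\initial_v(g)\mid g\in G\}$ and $\{\initial_w(g)\mid g\in G\}$ of $\initial_v(I)$ and $\initial_w(I)$ respectively; these sets are equal by hypothesis, so the two ideals coincide.

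The implication ``$\Rightarrow$'' is the substantial one, and its central step is to establish $v\in C_>(I)$. From the hypothesis and Lemma~\ref{lem:LeadOfInitialIdeal} applied to $w$ we obtain $\lt_>(\initial_v(I)) = \lt_>(\initial_w(I)) = \lt_>(I)$. Because $v_0<0$, every term of $\initial_v(g)$ is of the form $\lt_>(g_\alpha)\cdot\ux^\alpha$ (the smallest-$t$-power contribution from each $\ux^\alpha$-component of $g$), and hence so is its leading term $\lt_>(\initial_v(g))$. Either this index $\alpha$ equals the index $\alpha_0$ realizing $\lt_>(g)$, giving $\lt_>(\initial_v(g)) = \lt_>(g)$, or it does not, in which case $\lt_>(g_\alpha)\cdot\ux^\alpha$ is a term of $\tail_>(g')$ in the notation of Definition~\ref{def:initialReduction}. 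The latter case contradicts the initial reducedness of $G$, since this term would simultaneously lie in $\lt_>(\initial_v(I))=\lt_>(I)$. Hence $\lt_>(\initial_v(g)) = \lt_>(g)$ for every $g\in G$, and Lemma~\ref{lem:Cmembership} yields $v\in C_>(I)$.

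With $v\in C_>(I)$ in hand, Proposition~\ref{prop:BasisOfInitialIdeal} applied to both weights produces two initially reduced standard bases $\{\initial_v(g)\mid g\in G\}$ and $\{\initial_w(g)\mid g\in G\}$ of the common ideal $\initial_v(I)=\initial_w(I)$, sharing the leading terms $\lt_>(g)$. Comparing them element-wise, the difference $\initial_v(g)-\initial_w(g)\in\initial_w(I)$ has leading term, if nonzero, equal to $\pm\lt_>(g_\alpha)\cdot\ux^\alpha$ for some $\alpha\neq\alpha_0$; this term must lie in $\lt_>(\initial_w(I))=\lt_>(I)$ but is forbidden there by initial reducedness, forcing the difference to vanish and yielding $\initial_v(g) = \initial_w(g)$. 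The \emph{main obstacle} throughout is the extraction of $v\in C_>(I)$ from the weaker datum $\initial_v(I)=\initial_w(I)$: this is precisely where the tail condition in the definition of initial reducedness is indispensable, ruling out the possibility that the leading term of $\initial_v(g)$ could slip down from $\lt_>(g)$ to a strictly smaller term of $g$.
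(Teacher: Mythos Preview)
Your proof is correct and follows essentially the same route as the paper's. Both directions use Lemma~\ref{lem:Cmembership}, Lemma~\ref{lem:LeadOfInitialIdeal}, and the initial reducedness of $G$ in the same way; the only cosmetic difference is that you make the intermediate conclusion $v\in C_>(I)$ explicit and invoke Proposition~\ref{prop:BasisOfInitialIdeal} for the final comparison, whereas the paper argues directly from $\initial_w(g)-\initial_v(g)\in\initial_w(I)$ without naming that step.
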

   \begin{proof}
     \begin{description}[leftmargin=0.5em,font=\normalfont]
     \item[$\Leftarrow$] For $g\in G$ note that
       \begin{displaymath}
         \lt_>(\initial_v(g))=\lt_>(\initial_w(g))
         \underset{\ref{lem:Cmembership}}{\overset{\text{Lem.}}{=}} \lt_>(g),
       \end{displaymath}
       thus $v\in C_>(I)$, again by Lemma~\ref{lem:Cmembership}. This
       allows us to use Proposition~\ref{prop:BasisOfInitialIdeal}, which
       says that $\initial_w(I)$ and $\initial_v(I)$ share a common
       standard basis, therefore they must coincide.
     \item[$\Rightarrow$] Let $g\in G$. On the one hand, Lemma~\ref{lem:Cmembership} implies that $\lt_>(g)$ is a term of
       $\initial_w(g)$. On the other hand,
       \begin{displaymath}
         \lt_>(\initial_v(g)) \in \lt_>(\initial_v(I))=\lt_>(\initial_w(I))
         \underset{\ref{lem:LeadOfInitialIdeal}}{\overset{\text{Lem.}}{=}} \lt_>(I).
       \end{displaymath}
       But because $G$ is initially reduced, the only term of $g$
       occurring in $\initial_v(g)$ and $\lt_>(I)$ is $\lt_>(g)$. Thus
       $\lt_>(g)$ is also a term of $\initial_v(g)$.

       Now consider $\initial_w(g)-\initial_v(g) \in
       \initial_w(I)=\initial_v(I)$. Our previous arguments show that
       $\lt_>(\initial_w(g)-\initial_v(g))\neq\lt_>(g)$. However, because

       \begin{displaymath}
         \lt_>(\initial_w(g)-\initial_v(g))\in\lt_>(\initial_w(I))
         \underset{\ref{lem:LeadOfInitialIdeal}}{\overset{\text{Lem.}}{=}} \lt_>(I),
       \end{displaymath}
       it is another term of $\initial_w(g)$ or
       $\initial_v(g)$ in $\lt_>(I)$, which must be $0$. \qedhere
     \end{description}
   \end{proof}

   \begin{example}\label{ex:33}
     Consider the same ideal and ordering as in Example~\ref{ex:31} and
     Example~\ref{ex:32}, where $g_1 = x-t^3x+t^3z-t^4z$ and
     $g_2=y-t^3y+t^2z-t^4z$ form an initially reduced standard basis.

     For $w=(-1,2,-1,1)\in C_>(I)$ we have by Proposition~\ref{prop:initialEquality}:
     \begin{displaymath}
       \initial_{w'}(I)=\initial_w(I)=\langle x,y+t^2z\rangle \quad \Longleftrightarrow
       \quad \begin{cases} \initial_{w'}(g_1)=x, \\
         \initial_{w'}(g_2)=y+t^2z. \end{cases}
     \end{displaymath}
     Therefore, its equivalence class of weight vectors $w'\in\RR_{<
       0}\times\RR^3$ such that $\initial_{w'}(I)=\initial_w(I)$ is
     determined by the following system of inequalities and equations:
     \begin{displaymath}
       \initial_{w'}(g_1) = x \;\;\; \Longleftrightarrow \;\;\;
       \begin{cases}
         \deg_{w'}(x)>\deg_{w'}(t^3x) \\
         \deg_{w'}(x)>\deg_{w'}(t^3z) \\
         \deg_{w'}(x)>\deg_{w'}(t^4z)
       \end{cases}
       \Longleftrightarrow \;\;\;
       \begin{cases}
         0>w_0' \\
         w_1'>3w_0'+w_3' \\
         w_1'>4w_0'+w_3'
       \end{cases}
     \end{displaymath}
     \begin{displaymath}
       \initial_w(g_2) = y+t^2z \;\; \Longleftrightarrow \;\;
       \begin{cases}
         \deg_{w'}(y)>\deg_{w'}(t^3y) \\
         \deg_{w'}(y)=\deg_{w'}(t^2z) \\
         \deg_{w'}(y)>\deg_{w'}(t^4z)
       \end{cases}
       \Longleftrightarrow \;\;
       \begin{cases}
         0>w_0' \\
         w_2'=2w_0'+w_3' \\
         w_2'>4w_0'+w_3'
       \end{cases}
     \end{displaymath}
     In particular, its Euclidean closure, the Gr\"obner cone $C_w(I)$,
     is the face of $C_>(I)$ cut out by the hyperplane $\{
     w_2'=2w_0'+w_3' \}$.

     In fact, Proposition~\ref{prop:initialEquality} implies that
     $C_>(I)$ is stratified by equivalence classes of weight vectors as
     Figure~\ref{fig:example32} already suggested. Each  class
     is an open polyhedral cone whose Euclidean closure yields a face of
     $C_>(I)$.






   \end{example}

   \begin{proposition}\label{prop:groebnerConeDecomposition}
     For any $\ux$-homogeneous ideal $I\unlhd\Rtx$ and for any $w\in
     \RR_{< 0}\times\RR^n$, the Gr\"obner cones $C_w(I)$ and
     $C_w^0(I)$ are closed rational polyhedral cones.
   \end{proposition}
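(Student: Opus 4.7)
The plan is to mimic the explicit computation carried out in Example~\ref{ex:33}. Given $w \in \RR_{<0} \times \RR^n$, I first choose a $t$-local monomial ordering $>$ refining the partial ordering induced by $w$, so that $w \in C_>(I)$, and use Proposition~\ref{prop:existenceinitiallyreducedstb} to fix an initially reduced standard basis $G = \{g_1, \ldots, g_k\}$ of $I$ with respect to $>$. By Proposition~\ref{prop:initialEquality} the equivalence class whose Euclidean closure defines $C_w(I)$ is
\[
E_w := \bigl\{ v \in \RR_{<0} \times \RR^n \bigm| \initial_v(g_i) = \initial_w(g_i) \text{ for all } i = 1,\ldots,k \bigr\}.
\]

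Next I translate each condition $\initial_v(g_i) = \initial_w(g_i)$ into linear constraints on $v$. Writing $g_i = \sum_{(\alpha, \beta)} c_{i,\alpha, \beta}\cdot t^\beta\ux^\alpha$, let $S_i \subseteq \NN^n \times \NN$ be the finite index set of terms occurring in $\initial_w(g_i)$, and let $T_i$ denote the remaining support of $g_i$. Then $\initial_v(g_i) = \initial_w(g_i)$ holds if and only if $v\cdot(\beta,\alpha) = v\cdot(\delta,\gamma)$ for all $(\alpha,\beta),(\gamma,\delta) \in S_i$ and $v\cdot(\beta,\alpha) > v\cdot(\delta,\gamma)$ for all $(\alpha,\beta) \in S_i$, $(\gamma,\delta) \in T_i$. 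Hence $E_w$ is cut out from the open half-space $\{v_0 < 0\}$ by a system of linear equations and strict inequalities with integer coefficients. Although $T_i$ may be infinite (since $g_i$ may be a genuine power series in $t$), only finitely many of these inequalities are non-redundant: for each fixed $\alpha$ and any $v$ in the closure $\RR_{\leq 0} \times \RR^n$, the condition arising from $\beta_{\min}(\alpha) := \min\{\beta \mid c_{i,\alpha,\beta} \neq 0\}$ implies the corresponding conditions for all larger $\beta$, because $v_0 \leq 0$ forces $v\cdot(\beta,\alpha) \leq v\cdot(\beta_{\min}(\alpha),\alpha)$. Thus $E_w$ is an open rational polyhedral cone.

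Taking the Euclidean closure in $\RR_{\leq 0} \times \RR^n$ replaces each strict inequality by its weak counterpart and the condition $v_0 < 0$ by $v_0 \leq 0$; this exhibits $C_w(I)$ as a closed rational polyhedral cone. Finally $C_w^0(I) = C_w(I) \cap (\{0\} \times \RR^n)$ is the intersection of a closed rational polyhedral cone with a rational linear subspace, hence is itself a closed rational polyhedral cone.

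The main obstacle, and the only subtle step, is the finiteness of the inequality system defining $E_w$. This is where the full strength of the setup is used: it is the $t$-locality of $>$ together with the restriction to the half-space $v_0 \leq 0$ that makes the infinite tail of each $g_i$ in the variable $t$ contribute only redundant constraints. Everything else is then a routine translation of the equivalence condition into a rational polyhedral description, directly parallel to the calculation displayed in Example~\ref{ex:33}.
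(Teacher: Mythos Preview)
Your proof is correct and follows essentially the same approach as the paper's: you invoke Proposition~\ref{prop:initialEquality} to reduce to the conditions $\initial_v(g_i)=\initial_w(g_i)$ on an initially reduced standard basis, translate these into a finite system of rational linear equalities and strict inequalities by keeping only the minimal-$t$-exponent term for each $\ux$-monomial, and then pass to the closure. The paper organises the same argument by introducing the finite sets $\Lambda_i$ of exponent vectors with minimal $t$-entry from the outset, but the content is identical.
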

   \begin{proof}
     Let $>$ be a $t$-local weighted monomial ordering w.r.t.~a weight vector $w$, and let
     $G$ be an initially reduced standard basis of $I$ w.r.t.~$>$.

     Suppose $G=\{g_1,\ldots,g_k\}$ with $g_i=\sum_{\beta,\alpha}
     c_{\alpha,\beta,i} \cdot t^\beta {\ux}^\alpha$. Let $\Lambda_i$ be the
     finite set of exponent vectors with minimal entry in $t$,
     \begin{displaymath}
       \Lambda_i:=\left\{(\beta,\alpha)\in\NN\times\NN^n\suchthat \alpha \in \NN^n, \beta = \min \{\beta'\in\NN\mid c_{\alpha,\beta',i}\neq 0\} \right\}.
     \end{displaymath}
     Similar to Example~\ref{ex:33}, Proposition~\ref{prop:initialEquality} implies that the equivalence class of
     $w$, $\{v\in \RR_{< 0}\times\RR^n \mid
     \initial_v(I)=\initial_w(I)\}$, is cut out by a system of
     inequalities and equations
     \begin{align*}
       v\cdot (\beta,\alpha) &> v\cdot (\delta,\gamma), \quad \text{ for all } (\beta,\alpha), (\delta,\gamma)\in \Lambda_i \text{ with } w\cdot (\beta,\alpha)> w\cdot (\delta,\gamma), \\
       v\cdot (\beta,\alpha) &= v\cdot (\delta,\gamma), \quad \text{ for all } (\beta,\alpha), (\delta,\gamma)\in \Lambda_i \text{ with } w\cdot (\beta,\alpha)= w\cdot (\delta,\gamma).
     \end{align*}
     Therefore, the equivalence class forms a relative open polyhedral
     cone contained in the open lower half space $\RR_{<0}\times\RR^n$
     and
     its closure $C_w(I)$ yields a closed polyhedral cone in the closed
     lower half space $\RR_{\leq 0}\times\RR^n$. In particular, $C_w^0(I)
     = C_w(I)\cap (\{0\}\times\RR^n)$ is also a closed polyhedral cone.
   \end{proof}

   \begin{corollary}\label{cor:facesOfGroebnerCones}
     Let $I\unlhd\Rtx$ be an $\ux$-homogeneous ideal and let $w\in
     \RR_{<0}\times\RR^n$. Then any face $\tau\leq C_w(I)$ with
     $\tau\nsubseteq\{0\}\times\RR^n$ coincides with the closure of the
     equivalence class of any weight vector in its relative interior.

     In particular, each face $\tau\leq C_w(I)$ is a Gr\"obner cone of the form
     $\tau=C_v(I)$ or $\tau=C_v^0(I)$ for some $v\in C_w(I)$ and each
     face $\tau\leq C_w^0(I)$ is a Gr\"obner cone of the form $\tau=C_v^0(I)$ for some
     $v\in C_w(I)$.
   \end{corollary}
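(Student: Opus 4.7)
The plan is to exploit the explicit inequality description of $C_w(I)$ provided by the proof of Proposition~\ref{prop:groebnerConeDecomposition}. Fix a $t$-local monomial ordering $>$ with $w\in C_>(I)$ and let $G=\{g_1,\ldots,g_k\}$ be an initially reduced standard basis of $I$ with respect to $>$. Then $C_w(I)$ is cut out by $u_0\leq 0$ together with weak inequalities $u\cdot((\beta,\alpha)-(\delta,\gamma))\geq 0$ indexed by ordered pairs from the finite sets $\Lambda_i$ associated to each $g_i$, and every face $\tau\leq C_w(I)$ arises by promoting a subset of these inequalities to equalities.

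For the main statement I pick $v\in\mathrm{relint}(\tau)$. Because $\tau\nsubseteq\{0\}\times\RR^n$, the affine span of $\tau$ is not contained in $\{u_0=0\}$, so $v_0<0$ and $v\in\RR_{<0}\times\RR^n$. The equations and strict inequalities satisfied by $v$ on the pairs in the $\Lambda_i$ are precisely those cutting $\mathrm{relint}(\tau)$ out of $C_w(I)$, and by Proposition~\ref{prop:initialEquality} they also describe the equivalence class of $v$. Hence $\{u\in\RR_{<0}\times\RR^n\mid \initial_u(I)=\initial_v(I)\}=\mathrm{relint}(\tau)$, whose closure in $\RR_{\leq 0}\times\RR^n$ equals $\tau$, giving $\tau=C_v(I)$. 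This settles the main assertion and the first ``in particular'' case whenever $\tau\nsubseteq\{0\}\times\RR^n$.

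The remaining work is to handle a face $\tau\leq C_w(I)$ contained in $\{0\}\times\RR^n$, where I need a $v\in C_w(I)$ with $v_0<0$ such that $C_v^0(I)=\tau$. My plan is to construct a face $\sigma\leq C_w(I)$ with $\tau\subsetneq\sigma$, $\sigma\nsubseteq\{0\}\times\RR^n$ and $\sigma\cap(\{0\}\times\RR^n)=\tau$, then apply the main statement to any $v\in\mathrm{relint}(\sigma)\subseteq C_w(I)$. Writing $\tau$ as the face of $C_w(I)$ defined by a minimal set $E_\tau$ of equalities taken from its defining inequalities, two subcases arise: either $u_0\leq 0$ appears irredundantly in $E_\tau$, in which case $\sigma$ is the face defined by $E_\tau\setminus\{u_0\leq 0\}$; or $-u_0$ is a nontrivial linear combination of the equations in $E_\tau$, in which case I drop any equation appearing with nonzero coefficient in that combination. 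A short verification shows that in both subcases $\sigma$ strictly contains $\tau$, meets $\RR_{<0}\times\RR^n$, and re-imposing $u_0=0$ cuts $\sigma$ back down to $\tau$. Finally, the case $\tau\leq C_w^0(I)$ reduces to this one since $C_w^0(I)=C_w(I)\cap(\{0\}\times\RR^n)$ is itself a face of $C_w(I)$, so every face of $C_w^0(I)$ is a face of $C_w(I)$ contained in $\{0\}\times\RR^n$. The chief obstacle is this combinatorial lifting step producing $\sigma$; everything else follows immediately from Proposition~\ref{prop:initialEquality}.
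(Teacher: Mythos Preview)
Your argument for the interior case ($\tau\nsubseteq\{0\}\times\RR^n$) is correct and is exactly the paper's approach: both read off the equations and inequalities of $C_w(I)$ from an initially reduced standard basis via Proposition~\ref{prop:groebnerConeDecomposition}, observe that a face is obtained by turning some inequalities into equalities, and then invoke Proposition~\ref{prop:initialEquality} to identify the relative interior of the face with a single equivalence class.

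For the boundary case your construction of the lift $\sigma$ has a genuine gap: the two subcases you list are not exhaustive for an arbitrary minimal $E_\tau$. The implication ``$\tau\subseteq\{u_0=0\}$ and $u_0\leq 0\notin E_\tau$'' does \emph{not} force $-u_0$ to be a linear combination of the linear forms in $E_\tau$; the vanishing of $u_0$ on $\tau$ may instead be enforced by the remaining \emph{inequalities} of $C_w(I)$. Concretely, take $C_w(I)=\RR_{\leq 0}^2$ with defining inequalities $-u_0\geq 0$, $-u_1\geq 0$ and the redundant $-(u_0+u_1)\geq 0$, and $\tau=\{0\}$. Then $E_\tau=\{-(u_0+u_1)\geq 0\}$ is a minimal defining set not containing $u_0\leq 0$, yet $-u_0$ is not a scalar multiple of $-(u_0+u_1)$, so neither of your subcases applies. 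Dropping the unique element of $E_\tau$ gives $\sigma=C_w(I)$, whose intersection with $\{u_0=0\}$ is a ray, not $\tau$.

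The paper's own proof of this step is a single sentence and is likewise light on detail; it just asserts that a supporting hyperplane cutting out $\tau$ from $C_w^0(I)$ can be chosen to cut out an interior face $C_v(I)$ from $C_w(I)$. A clean way to make either argument rigorous is to use the recession-cone description: by Proposition~\ref{cor:recessionFan}(2) one has $C_w^0(I)=\rec(C_w^{-1}(I))$, and every face of the recession cone of a polyhedron is the recession cone of some face of that polyhedron. This yields a face $Q\leq C_w^{-1}(I)$ with $\rec(Q)=\tau$, and the cone $\sigma$ over $Q$ is then a face of $C_w(I)$ meeting $\{u_0=-1\}$ (hence not in the boundary) with $\sigma\cap\{u_0=0\}=\tau$. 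Alternatively, you can rescue your approach by first proving that a minimal $E_\tau$ containing $u_0\leq 0$ always exists, which then puts you in your Case~1; but that claim itself needs an argument.
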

   \begin{proof}
     Consider again the system of inequalities and equations that cut out $C_w(I)$ in
     the proof of the previous
     Proposition~\ref{prop:groebnerConeDecomposition}, which we
     obtained from the sets of
     exponent vectors $\Lambda_1,\ldots,\Lambda_k$ of an initially reduced standard basis w.r.t.~a weighted ordering $>_w$.

     A face $\tau\leq C_w(I)$ is cut out by supporting hyperplanes, on
     which some of the inequalities above become equations. Assuming that
     $\tau\nsubseteq\{0\}\times\RR^n$, all weight
     vectors in the relative interior yield the same initial forms on
     $g_1,\ldots,g_k\in G$, since they satisfy the same equations and
     inequalities on the exponent vectors $\Lambda_1,\ldots,\Lambda_k$. This
     implies that they belong to the same equivalence class whose closure
     is then $\tau$. In particular, $\tau=C_v(I)$.

     And any face $\tau\leq C_w^0(I)\leq C_w(I)$ can be cut out by a
     supporting hyperplane which also cuts out a face $C_v(I)\leq
     C_w(I)$. It is then clear that $\tau=C_v^0(I)$.
   \end{proof}

   \begin{proposition}\label{prop:groebnerConeIntersection}
     Let $I\unlhd\Rtx$ be an $\ux$-homogeneous ideal and let $C_u(I)$
     and $C_v(I)$ be two interior Gr\"obner cones of $I$ such that
     $C_u(I)\cap C_v(I)\nsubseteq\{0\}\times\RR^n$.
     Then $C_u(I)\cap C_v(I)$ is an interior Gr\"obner cone and it is a
     face of both\lang{ $C_u(I)$ and $C_v(I)$}.
   \end{proposition}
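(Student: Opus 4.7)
The plan is to locate a distinguished weight vector in the relative interior of $C_u(I)\cap C_v(I)$ and then to show that its Gr\"obner cone is exactly this intersection. I first need to ensure that the relative interior of the convex cone $C_u(I)\cap C_v(I)$ contains a vector with negative first coordinate: this holds because the intersection is a non-empty closed convex set not contained in $\{0\}\times\RR^n$, so combining any relative interior point with a witness $p\in C_u(I)\cap C_v(I)$ having $p_0<0$ by a convex combination $(1-\lambda)r+\lambda p$ with $\lambda\in(0,1)$ lands in the relative interior and keeps the first coordinate negative. Call such a point $w$.

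Next, $w$ lies in $C_u(I)$ and is therefore contained in the relative interior of a uniquely determined face $F_u\leq C_u(I)$. Since $w_0<0$, we have $F_u\nsubseteq\{0\}\times\RR^n$, so Corollary~\ref{cor:facesOfGroebnerCones} identifies $F_u=C_w(I)$. Applying the same argument to $C_v(I)$ yields a face $F_v\leq C_v(I)$ with $F_v=C_w(I)$. In particular $C_w(I)$ is a common face of both $C_u(I)$ and $C_v(I)$, and one inclusion $C_w(I)\subseteq C_u(I)\cap C_v(I)$ is immediate.

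For the reverse inclusion, take any $p\in C_u(I)\cap C_v(I)$. Because $w$ lies in the relative interior of $C_u(I)\cap C_v(I)$, there exists some $\varepsilon>0$ such that $q:=(1+\varepsilon)w-\varepsilon p$ still lies in $C_u(I)\cap C_v(I)$. Then $w$ sits in the open segment from $p$ to $q$, and the entire segment is contained in $C_u(I)$. Since $w$ lies in the relative interior of the face $F_u=C_w(I)$ of $C_u(I)$, the standard fact that a face of a convex set meeting the relative interior of a segment must contain the entire segment forces $p\in C_w(I)$. This proves $C_u(I)\cap C_v(I)\subseteq C_w(I)$, so the intersection coincides with the interior Gr\"obner cone $C_w(I)$ and is a face of both $C_u(I)$ and $C_v(I)$.

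The main obstacle is the initial extraction of a suitable relative-interior point in the strict lower half space, together with the compatible identification of $C_w(I)$ as a face of both cones through Corollary~\ref{cor:facesOfGroebnerCones}; once these are secured, everything else reduces to routine facts about faces of polyhedral cones.
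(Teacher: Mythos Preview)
Your argument is correct, and it takes a genuinely different route from the paper. The paper proceeds by stratifying $(C_u(I)\cap C_v(I))\cap(\RR_{<0}\times\RR^n)$ into equivalence classes, then uses the finiteness result of Corollary~\ref{cor:finiteInitialIdeals} together with a dimension count to isolate a unique equivalence class of maximal dimension whose closure must fill the intersection. Your approach is more elementary: you pick a single relative-interior point $w$ with $w_0<0$, invoke Corollary~\ref{cor:facesOfGroebnerCones} once for each cone to identify the carrier face of $w$ in $C_u(I)$ and in $C_v(I)$ as $C_w(I)$, and then finish with the standard segment-absorption property of faces. This avoids both the finiteness corollary and the uniqueness-by-dimension argument entirely. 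The paper's proof, on the other hand, makes the internal stratification of the intersection more explicit, which fits its constructive theme; your proof is shorter and relies only on basic convex geometry once Corollary~\ref{cor:facesOfGroebnerCones} is in hand.
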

   \begin{proof}
     By Proposition~\ref{prop:groebnerConeDecomposition}, both
     $C_u(I)\cap(\RR_{<0}\times\RR^n)$ and
     $C_v(I)\cap(\RR_{<0}\times\RR^n)$ can be decomposed
     into a union of equivalence classes, and hence so can $(C_u(I)\cap C_v(I))\cap(\RR_{<0}\times\RR^n)\neq\emptyset$.

     Let $k:=\dim(C_u(I)\cap C_v(I))$. Then the intersection contains exactly one equivalence class of dimension $k$:
     If there were none, then the intersection would be covered by a collection of lower dimensional
     open cones of which there are, however, only finitely many by Corollary~\ref{cor:finiteInitialIdeals}.
     If there were more than one, then that would contradict Proposition~\ref{prop:groebnerConeDecomposition}, which states that the closure of each equivalence class
     yields a distinct face of both $C_u(I)$ and $C_v(I)$, and no two $k$-dimensional faces of a polyhedral cone may
     be cut out by the same $k$-dimensional supporting hyperplane.

     So let $w$ be in the maximal equivalence class in $C_u(I)\cap C_v(I)$.
     Taking the Euclidean closure, we necessarily have $C_w(I)=C_u(I)\cap C_v(I)$,
     and, by Corollary~\ref{prop:groebnerConeDecomposition}, it is a face of both $C_u(I)$ and $C_v(I)$.
   \end{proof}

   Note that the proposition above falls a bit short in proving that the
   intersection of two Gr\"obner cones yields a face of both, as it only
   covers Gr\"obner cones with an intersection in the open part of the
   lower halfspace.
   To cover the remaining intersection, we need  some results on recession fans.

   \begin{definition}
     Let $I\unlhd\Rtx$ be an $\ux$-homogeneous ideal and $w\in \RR_{<0}\times\RR^n$.
     For an interior Gr\"obner cone $C_w(I)$ let $C_w^{-1}(I)$ denote the intersection
     \begin{displaymath}
       C_w^{-1}(I) := C_w(I) \cap (\{-1\}\times\RR^n).
     \end{displaymath}
     It is a polytope whose \emph{recession cone} is defined to be the
     set of all weight vectors in $\RR_{\leq 0}\times\RR^n$ under whose
     translation it is closed,
     \begin{displaymath}
       \rec(C_w^{-1}(I)) := \{ v\in \RR_{\leq 0}\times\RR^n\mid v+C_w^{-1}(I)\subseteq C_w^{-1}(I) \}.
     \end{displaymath}
     Note that $C_w^{-1}(I)\subseteq \{-1\}\times\RR^n$ necessarily implies $\rec(C_w^{-1}(I))\subseteq \{0\}\times\RR^n$.
   \end{definition}

   \begin{proposition}\label{cor:recessionFan}
     Let $I\unlhd\Rtx$ be an $\ux$-homogeneous ideal.
     \begin{enumerate}[leftmargin=*]
     \item The collection \kurz{$\{C_w^{-1}(I)\mid w\in\RR_{<0}\times\RR^n\}$}
       \lang{\begin{displaymath}
         \{C\cap(\{-1\}\times\RR^n)\;|\;C\in \Sigma(I)\}=\{C_w^{-1}(I)\mid w\in\RR_{<0}\times\RR^n\}
       \end{displaymath}}
       is a polyhedral complex whose support is the affine hyperplane $\{-1\}\times\RR^n$.
     \item For any weight vector $w\in \RR_{<0}\times\RR^n$, $C_w^0(I)=\rec(C_w^{-1}(I))$.
     \item The collection \kurz{$\{C_w^0(I)\mid w\in\RR_{<0}\times\RR^n\}$}
       \lang{\begin{displaymath}
         \{C\cap(\{0\}\times\RR^n)\;|\;C\in \Sigma(I)\}=
         \{C_w^0(I)\mid w\in\RR_{<0}\times\RR^n\}
       \end{displaymath}}
       is a polyhedral fan whose support is the boundary hyperplane $\{0\}\times\RR^n$.
     \end{enumerate}
   \end{proposition}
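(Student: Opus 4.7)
The plan is to establish the three parts in sequence, building on the structural results from the previous sections.

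For Part~(1), each $C_w^{-1}(I)$ is the intersection of the closed rational polyhedral cone $C_w(I)$ (Proposition~\ref{prop:groebnerConeDecomposition}) with the affine hyperplane $\{-1\}\times\RR^n$, hence is a closed rational polyhedron, and is nonempty because the rescaling $\frac{1}{|w_0|}\cdot w$ lies in it. The collection is finite by Corollary~\ref{cor:finiteInitialIdeals}. Faces of $C_w^{-1}(I)$ arise as slices of those faces of $C_w(I)$ meeting $\{-1\}\times\RR^n$, which by Corollary~\ref{cor:facesOfGroebnerCones} are themselves Gr\"obner cones $C_v(I)$, so their slices are again in the collection. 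For pairwise intersections: if $C_u^{-1}(I)\cap C_v^{-1}(I)$ is nonempty, then $C_u(I)\cap C_v(I)\not\subseteq\{0\}\times\RR^n$, so by Proposition~\ref{prop:groebnerConeIntersection} the cone intersection is a face of both, and slicing by $\{-1\}\times\RR^n$ produces a face of both slices. For the support, any $(-1,v)\in\{-1\}\times\RR^n$ lies in $C_>(I)$ for any $t$-local refinement $>$ of the partial ordering it induces, as in the argument of Corollary~\ref{cor:finiteInitialIdeals}.

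Part~(2) is standard polyhedral geometry. Writing $C_w(I)=\{v\mid a_i\cdot v\leq 0\text{ for all }i\}$ with finitely many homogeneous inequalities, the slice $C_w^{-1}(I)=\{(-1,y)\mid a_i\cdot(-1,y)\leq 0\text{ for all }i\}$ has recession cone $\{(0,y')\mid a_i\cdot(0,y')\leq 0\text{ for all }i\}=C_w(I)\cap(\{0\}\times\RR^n)=C_w^0(I)$.

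For Part~(3), finiteness and rationality follow from Corollary~\ref{cor:finiteInitialIdeals} together with Proposition~\ref{prop:groebnerConeDecomposition}. For the support, every $v\in\{0\}\times\RR^n$ is the limit of the perturbations $v_\epsilon:=v+(-\epsilon,0,\ldots,0)\in\RR_{<0}\times\RR^n$; by finiteness, an infinite subsequence of these $v_\epsilon$ lies in a single $C_w(I)$, and closedness of $C_w(I)$ places $v$ in $C_w^0(I)$. Face-closedness of $\{C_w^0(I)\}$ follows directly from Corollary~\ref{cor:facesOfGroebnerCones}. For the intersection property, I would use Part~(2) to reduce to showing that $\rec(C_u^{-1}(I))\cap\rec(C_v^{-1}(I))$ is a common face of both recession cones: when the two slices meet, the identity $\rec(C_u^{-1}(I)\cap C_v^{-1}(I))=\rec(C_u^{-1}(I))\cap\rec(C_v^{-1}(I))$ combined with the general fact that recession cones of faces are faces of recession cones closes the argument. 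The main obstacle is the case where the two slices are disjoint but their recession cones still meet nontrivially, as in the two red cones of Example~\ref{ex:intersectionHomogeneitySpace}; here one must exploit the convexity of the support $\{-1\}\times\RR^n$ of the complex from Part~(1), which by a general polyhedral geometry result forces the collection of recession cones of such a complex to glue into a polyhedral fan.
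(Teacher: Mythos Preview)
Your proposal is correct and follows essentially the same route as the paper: Part~(1) via Proposition~\ref{prop:groebnerConeDecomposition}, Corollary~\ref{cor:facesOfGroebnerCones} and Proposition~\ref{prop:groebnerConeIntersection}; Part~(2) as an elementary observation; and Part~(3) by recognising that the recession cones of a finite polyhedral complex with convex support form a fan. The only difference is that where you invoke ``a general polyhedral geometry result'' for the disjoint-slice case in Part~(3), the paper simply cites \cite[Cor.~3.10]{BS11}, which is precisely that statement.
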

   \begin{proof}
     1.~follows from Proposition~\ref{prop:groebnerConeDecomposition},
     Corollary~\ref{cor:facesOfGroebnerCones} and
     Proposition~\ref{prop:groebnerConeIntersection}.
     2.~is clear, and 3.~follows
     from \cite[Cor.~3.10]{BS11}.
   \end{proof}

   We can now supplement the missing intersections in Proposition~\ref{prop:groebnerConeIntersection}.

   \begin{corollary}\label{cor:groebnerConeIntersection}
     Let $I\unlhd\Rtx$ be an $\ux$-homogeneous ideal and let
     $u,v\in\RR_{<0}\times\RR^n$. Then the intersections
     $C_u^0(I)\cap C_v^0(I)$, $C_u^0(I)\cap C_v(I)$ are boundary Gr\"obner cones of $I$ and
     they are faces of the intersected cones.
   \end{corollary}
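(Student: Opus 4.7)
The plan is to reduce everything to the boundary fan, where Proposition~\ref{cor:recessionFan}(3) already does all the work.

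First I would observe that, because $C_u^0(I)\subseteq\{0\}\times\RR^n$ and $C_v^0(I)=C_v(I)\cap(\{0\}\times\RR^n)$ holds by definition, the two intersections in the statement coincide, namely
\begin{displaymath}
  C_u^0(I)\cap C_v(I)\;=\;C_u^0(I)\cap C_v(I)\cap(\{0\}\times\RR^n)\;=\;C_u^0(I)\cap C_v^0(I).
\end{displaymath}
So it suffices to analyse $C_u^0(I)\cap C_v^0(I)$.

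Next I would invoke Proposition~\ref{cor:recessionFan}(3), which says that the collection $\{C_w^0(I)\mid w\in\RR_{<0}\times\RR^n\}$ is a polyhedral fan covering $\{0\}\times\RR^n$. By the defining property of a polyhedral fan, the intersection $C_u^0(I)\cap C_v^0(I)$ is a common face of both $C_u^0(I)$ and $C_v^0(I)$ and is itself one of the cones in the collection, i.e.\ a boundary Gr\"obner cone $C_w^0(I)$ for a suitable $w\in\RR_{<0}\times\RR^n$. This already settles the case $C_u^0(I)\cap C_v^0(I)$.

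It remains to see that $C_u^0(I)\cap C_v^0(I)$ is also a face of $C_v(I)$ (and symmetrically of $C_u(I)$, but the argument is the same). Since $C_v(I)\subseteq\RR_{\leq 0}\times\RR^n$ by definition, the hyperplane $\{w_0=0\}$ is a supporting hyperplane of $C_v(I)$, so $C_v^0(I)=C_v(I)\cap\{w_0=0\}$ is a face of $C_v(I)$. Transitivity of the face relation then lets us promote the face $C_u^0(I)\cap C_v^0(I)\leq C_v^0(I)$ obtained above to a face of $C_v(I)$, which finishes the proof.

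There is essentially no obstacle here: the whole statement is a clean-up corollary that glues the fan property on the boundary (Proposition~\ref{cor:recessionFan}(3)) to the interior statement (Proposition~\ref{prop:groebnerConeIntersection}) via the trivial set-theoretic identity above; the only thing one needs to be slightly careful about is not to mix up $C_v(I)$ and $C_v^0(I)$ when applying transitivity of faces.
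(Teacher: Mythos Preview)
Your proposal is correct and follows essentially the same route as the paper: reduce the mixed intersection $C_u^0(I)\cap C_v(I)$ to the boundary intersection $C_u^0(I)\cap C_v^0(I)$ via the set-theoretic identity, and then appeal to Proposition~\ref{cor:recessionFan}(3). The paper invokes Corollary~\ref{cor:facesOfGroebnerCones} rather than the closure-under-faces axiom of a fan to identify the intersection as some $C_w^0(I)$, and it leaves the face relation $C_w^0(I)\leq C_v(I)$ implicit, whereas you spell out the supporting-hyperplane and transitivity argument; both are minor presentational differences, not substantive ones.
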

   \begin{proof}
     Since the boundary Gr\"obner cones form a polyhedral fan by
     Proposition~\ref{cor:recessionFan}, the intersection $C_u^0(I)\cap
     C_v^0(I)$ is a face of both. In particular, by Corollary~\ref{cor:facesOfGroebnerCones}, there is a weight vector
     $w\in\RR_{<0}\times\RR^n$ with
     \begin{displaymath}
       C_u^0(I)\cap C_v^0(I) = C_w^0(I).
     \end{displaymath}
     And for the intersection of a boundary Gr\"obner cone and an interior Gr\"obner cone, note that
     \begin{displaymath}
       C_u^0(I)\cap C_v(I)=C_u^0(I)\cap C_v^0(I) = C_w^0(I). \qedhere
     \end{displaymath}
   \end{proof}

   We are now able to prove the main theoretical result of the paper.

   \begin{theorem}\label{thm:groebnerFan}
     Let $I\unlhd \Rtx$ be an ${\ux}$-homogeneous ideal, then the Gr\"obner fan
     \begin{displaymath}
       \Sigma(I) = \{ C_w(I)\mid w\in \RR_{<0}\times\RR^n \} \cup \{ C_w^0(I)\mid w\in \RR_{<0}\times\RR^n \}
     \end{displaymath}
     is a rational polyhedral fan with support $\RR_{\leq 0}\times\RR^n$.
   \end{theorem}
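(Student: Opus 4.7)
The plan is to assemble the structural results already proved in Section~\ref{sec:groebnerfan}, since at this point almost every property required of a rational polyhedral fan has been verified separately for interior and boundary Gr\"obner cones. I would begin by recording that Corollary~\ref{cor:finiteInitialIdeals} supplies finiteness of the collection, and that Proposition~\ref{prop:groebnerConeDecomposition} establishes the first defining property of a rational polyhedral fan, namely that every $C_w(I)$ and every $C_w^0(I)$ is a closed rational polyhedral cone.

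Next I would verify closure under taking faces using Corollary~\ref{cor:facesOfGroebnerCones}: any face of an interior cone $C_w(I)$ is either another interior Gr\"obner cone $C_v(I)$ or a boundary Gr\"obner cone $C_v^0(I)$, and any face of a boundary cone $C_w^0(I)$ is itself of the form $C_v^0(I)$. Both types already belong to $\Sigma(I)$ by construction.

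The intersection axiom is the only spot that needs a small case split. Proposition~\ref{prop:groebnerConeIntersection} handles the intersection of two interior cones whenever it is not contained in $\{0\}\times\RR^n$, and Corollary~\ref{cor:groebnerConeIntersection} mops up the remaining possibilities: interior-boundary and boundary-boundary intersections, as well as interior-interior intersections that happen to lie in the boundary hyperplane. Together these show that for any two cones $C,C'\in\Sigma(I)$ the intersection $C\cap C'$ is again in $\Sigma(I)$ and is a face of both.

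Finally I would address the support. The inclusion $\operatorname{supp}\Sigma(I)\subseteq\RR_{\leq 0}\times\RR^n$ is immediate from the definitions of $C_w(I)$ and $C_w^0(I)$. For the reverse inclusion, any $w\in\RR_{<0}\times\RR^n$ obviously lies in $C_w(I)$, and any $w\in\{0\}\times\RR^n$ is covered by a boundary Gr\"obner cone thanks to part~(3) of Proposition~\ref{cor:recessionFan}, which states that the boundary cones form a polyhedral fan whose support is exactly $\{0\}\times\RR^n$. I do not anticipate a genuine obstacle here; the only mildly delicate bookkeeping is making sure that the intersection cases whose closure is forced into the boundary hyperplane are routed through Corollary~\ref{cor:groebnerConeIntersection} rather than through Proposition~\ref{prop:groebnerConeIntersection}, whose hypothesis explicitly excludes that situation.
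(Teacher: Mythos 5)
Your proposal is correct and follows essentially the same route as the paper, which likewise assembles Proposition~\ref{prop:groebnerConeDecomposition}, Corollary~\ref{cor:facesOfGroebnerCones}, Proposition~\ref{prop:groebnerConeIntersection}, Corollary~\ref{cor:groebnerConeIntersection} and Corollary~\ref{cor:finiteInitialIdeals} in exactly this way. Your explicit verification of the support via Proposition~\ref{cor:recessionFan}(3) is a small but welcome addition that the paper leaves implicit.
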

   \begin{proof}
     Proposition~\ref{prop:groebnerConeDecomposition} shows that each
     Gr\"obner cone is a polyhedral cone, while Corollary~\ref{cor:facesOfGroebnerCones} proves that each face of a Gr\"obner
     cone is again a Gr\"obner cone. Proposition~\ref{prop:groebnerConeIntersection} and Corollary~\ref{cor:groebnerConeIntersection} infer that the intersection of
     two Gr\"obner cones is a face of each, and Corollary~\ref{cor:finiteInitialIdeals} shows that there are only finitely
     many of them.
   \end{proof}

   \begin{example}
     Consider the following ideal generated by polynomials
     \begin{displaymath}
       \langle 2x+2y, t+2 \rangle \unlhd \ZZ\llbracket t\rrbracket [x,y].
     \end{displaymath}
     Now because the ideal is generated by elements in $\ZZ[t,x,y]$, one
     might be tempted to believe that restricting ourselves to the
     polynomial ideal
     \begin{displaymath}
       \langle 2x+2y, t+2 \rangle \unlhd \ZZ [t,x,y],
     \end{displaymath}
     might allow us to work with weight vectors $\RR_{\geq 0}\times\RR^2$
     with positive weight in $t$, obtain similar results about the
     existence of a Gr\"obner fan there and patch the two Gr\"obner fans
     in $\RR_{\leq 0}\times\RR^2$ and in $\RR_{\geq 0}\times\RR^2$
     together.

     While the existence of a Gr\"obner fan in the positive halfspace is
     true for our specific example, note that the two Gr\"obner fans
     cannot be glued together to a polyhedral fan on $\RR\times\RR^2$, as
     illustrated in Figure~\ref{fig:groebnerFanNoGlobal}.
     \begin{figure}[h]
       \centering
       \begin{tikzpicture}
         \fill [blue!20] (-4,0) rectangle (4,3);
         \fill [red!20] (-4,0) rectangle (4,-3);
         \node [font=\small] at (0,-1.5) {$\langle 2 \rangle$};
         \node [font=\small] at (-2,1.5) {$\langle 2x,t \rangle$};
         \node [font=\small] at (2,1.5) {$\langle 2y,t \rangle$};
         \node [anchor=south, font=\small] at (0,3) {$\langle 2x+2y,t \rangle$};
         \draw [line width=0.15cm, blue!20] (-4,0) -- (4,0);
         \draw [line width=0.15cm, red!20, dashed] (-4,0) -- (4,0);
         \draw (-4,0) -- (4,0);
         \draw (0,0) -- (0,3);
         \draw[->, shorten <=5pt, shorten >=5pt] (-5,0) -- (-4,0);
         \node [anchor=east]  at (-5,0) {??};
         \draw[->, shorten <=5pt, shorten >=5pt] (5,0) -- (4,0);
         \node [anchor=west]  at (5,0) {??};
         \node [anchor=north east, font=\small] at (4,-3) {$\RR\times\RR^2\cap \{ w_x=w_y\}$};
       \end{tikzpicture}
       \caption{$\Sigma(\langle 2x+2y, t+2 \rangle)$ on $\RR\times\RR^2$...?}
       \label{fig:groebnerFanNoGlobal}
     \end{figure}
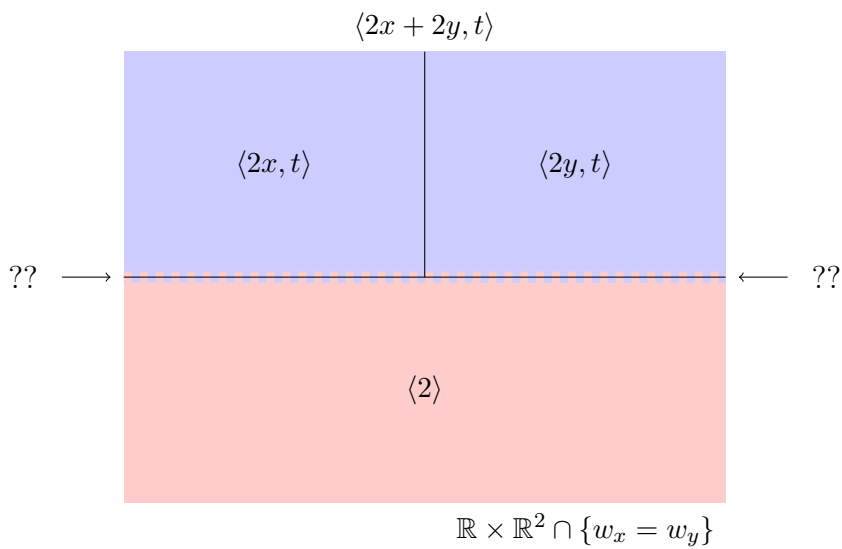
   \end{example}

   As demonstrated in Example~\ref{ex:33} and used in the proof of
   Proposition~\ref{prop:groebnerConeDecomposition}, Proposition~\ref{prop:initialEquality} allows us to read of the inequalities and
   equations of a Gr\"obner cone from an initially reduced standard
   basis. This can be done as described in the following algorithm.

   \begin{algorithm}[Inequalities and equations of a Gr\"obner cone]\label{alg:groebnerConeNoWeight}\  \vspace*{-3ex}
     \begin{algorithmic}[1]
       \REQUIRE{$(H,G,>)$, where for an ${\ux}$-homogeneous ideal $I$ and an undetermined weight vector $w\in\RR_{< 0}\times\RR^n$
         \begin{enumerate}[leftmargin=*, itemsep=0pt]
         \item $>$ a $t$-local monomial ordering such that $w\in C_>(I)$,
         \item $G=\{g_1,\ldots,g_k\}$ an initially reduced standard basis of $I$ w.r.t. $>$,
         \item $H=\{h_1,\ldots,h_k\}$ with $h_i=\initial_w(g_i)$.
         \end{enumerate}}
       \ENSURE{$(A,B)$, a pair of matrices
         \begin{displaymath}
           A\in\Mat(l_A\times(n+1),\RR), \quad B\in\Mat(l_B\times(n+1),\RR)
         \end{displaymath}
         such that
         \begin{displaymath}
           C_w(I)=\{v\in\RR_{\leq 0}\times\RR^n\mid A\cdot v\in(\RR_{\geq 0})^{l_A} \text{ and } B\cdot v = 0\in\RR^{l_B} \}.
         \end{displaymath}}
       \FOR{$i=1,\ldots,k$}
       \STATE Suppose $g_i=\sum_{\beta,\alpha} c_{\alpha,\beta,i} \cdot t^\beta {\ux}^\alpha$ and $\lm_>(g)=t^{\delta}{\ux}^{\gamma}$.
       \STATE Construct the set of exponent vectors with minimal entry in $t$,
       \begin{displaymath}
         \Lambda_i:=\left\{(\beta,\alpha)\in\NN\times\NN^n\suchthat \alpha \in \NN^n, \beta = \min \{\beta'\in\NN\mid c_{\alpha,\beta',i}\neq 0\} \right\}.
       \end{displaymath}
       \STATE Construct a set of vectors that will yield the inequalities,
       \begin{displaymath}
         \Omega_i:=\left\{ (\delta,\gamma)-(\alpha,\beta)\in \RR\times\RR^n \mid (\alpha,\beta)\in \Lambda_i,\;\; (\alpha,\beta) \neq (\delta,\gamma)\right\}.
       \end{displaymath}
       \ENDFOR
       \STATE Let $A$ be a matrix whose row vectors consist of $\bigcup_{i=1}^k \Omega_i$.
       \FOR{$i=1,\ldots,k$}
       \STATE Suppose $h_i=\sum_{\beta,\alpha} d_{\alpha,\beta,i} \cdot t^\beta {\ux}^\alpha$.
       \STATE Construct the set of exponent vectors with minimal entry in $t$,
       \begin{displaymath}
         \Lambda_i':=\left\{(\beta,\alpha)\in\NN\times\NN^n\suchthat \alpha \in \NN^n, \beta = \min \{\beta'\in\NN\mid d_{\alpha,\beta',i}\neq 0\} \right\}.
       \end{displaymath}
       \STATE Construct a set of vectors that will yield the equations,
       \begin{displaymath}
         \Theta_i:=\left\{ a-b\in \RR\times\RR^n \mid a,b\in \Lambda_i' \right\}.
       \end{displaymath}
       \ENDFOR
       \STATE Let $B$ be a matrix whose row vectors consist of $\bigcup_{i=1}^k \Theta_i$.
       \RETURN{$(A,B)$.}
     \end{algorithmic}
   \end{algorithm}



   We close the section with an example which shows why it is
   important that the standard basis is initially reduced in order to
   determine the corresponding Gr\"obner cone. It is an example abiding
   to the special assumptions on $R$ and $I$ considered in
   Section~\ref{sec:initiallyreduced} (see Page~\pageref{page:spezialannahme}).

   \begin{example}\label{ex:spezialannahmen}
     Let $\Rtx=\ZZ\llbracket t \rrbracket[x,y,z]$ and let $>=>_v$ be a
     weighted ordering with weight vector
     $v=(-1,1,1,1)\in\RR_{<0}\times\RR^3$ and the $t$-local
     lexicographical ordering $x>y>1>t$ as tiebreaker. We consider the
     ideal
     \begin{displaymath}
       I =  \langle g_0= 2-t, g_1=x+t^2y+t^3z, g_2=y+tx+t^2z\rangle \unlhd \ZZ\llbracket t\rrbracket[x,y,z],
     \end{displaymath}
     to illustrate that the initial reduction of the standard basis
     ist important for determining the inequalities and
     equations of the corresponding Gr\"obner cone (see
     Algorithm~\ref{alg:groebnerConeNoWeight}).

     Note that the generating set is a standard basis w.r.t.~$>$, but it is not yet
     initially reduced as the terms $t^2y$ in $g_1$ and $tx$ in $g_2$
     still lie in $\lt_>(I)=\langle
     2,x,y\rangle$. Consequently, these two terms yield meddling
     inequalities, so that \lang{(the overline denoting the closure in the
     Euclidean topology)}
     \begin{displaymath}
       C:=\overline{\{w\in \RR_{< 0}\times\RR^3\mid \initial_w(g_i)=\initial_v(g_i) \text{ for } i=0,1,2 \}}\subsetneq C_v(I).
     \end{displaymath}
     Ignoring $g_0$, as it yields no non-trivial inequalities in
     $\RR_{\leq 0}\times\RR^3$, $C$ is
     the polyhedral cone given by the inequalities (see
     Figure~\ref{fig:groebnerConeNotInitiallyReducedInequalities1})
     \begin{displaymath}
       \initial_w(g_1) = x \;\;\; \Longleftrightarrow \;\;\; \begin{cases} \deg_w(x)\geq \deg_w(t^2y) \\ \deg_w(x)\geq \deg_w(t^3z) \end{cases} \Longleftrightarrow \;\;\; \begin{cases} w_1\geq 2w_0+w_2 \\ w_1\geq 3w_0+w_3 \end{cases}
     \end{displaymath}
     \lang{and}
     \begin{displaymath}
       \initial_w(g_2) = y \;\;\; \Longleftrightarrow \;\;\; \begin{cases} \deg_w(y)\geq \deg_w(tx) \\ \deg_w(y)\geq \deg_w(t^2z) \end{cases} \Longleftrightarrow \;\;\; \begin{cases} w_2\geq w_0+w_1 \\ w_2\geq 2w_0+w_3 \end{cases}
     \end{displaymath}
     \begin{figure}[h]
       \centering
       \begin{tikzpicture}[scale=0.7, every node/.style={transform shape}]
         \fill[color=blue!20] (-3,-3) rectangle (6.5,5);
         \draw[->] (-2,0) -- (4,0) node [anchor=west] {$w_1$};
         \draw[->] (0,-2) -- (0,4) node [anchor=west] {$w_2$};
         \fill (0,0) circle (2pt);
         \node[anchor=north west,font=\scriptsize] at (0,0) {$(-1,0,0,1)$};

         \draw[red] (-2,-1) -- (3,4) node [black, font=\scriptsize, anchor=south] {$\{w_1>-2+w_2\}$};
         \draw[thick,red,->] (2,3) -- (2.3,2.7);
         \draw[red] (-1,-2) -- (-1,4) node [black, font=\scriptsize, anchor=south] {$\{w_1>-3+1\}$};
         \draw[thick,red,->] (-1,-1) -- (-0.5,-1);
         \fill (-1,0) circle (1pt);
         \node[anchor=south east,font=\scriptsize] at (-1,0) {$-2$};

         \fill (1,0) circle (2pt);
         \node[anchor=south west,font=\scriptsize] at (1,0) {$(-1,2,0,1)$};

         \node[anchor=south east,font=\scriptsize] at (5,-3) {$\RR_{\leq 0}\times\RR^3\cap\{w_0=-1,w_3=1\}$};
       \end{tikzpicture}
       \begin{tikzpicture}[scale=0.7, every node/.style={transform shape}]
         \fill[color=blue!20] (-3,-3) rectangle (6.5,5);
         \draw[->] (-2,0) -- (4,0) node [anchor=west] {$w_1$};
         \draw[->] (0,-2) -- (0,4) node [anchor=west] {$w_2$};
         \fill (0,0) circle (2pt);
         \node[anchor=south east,font=\scriptsize] at (0,0) {$(-1,0,0,1)$};
         \draw[red] (-1.5,-2) -- (4,3.5) node [black, font=\scriptsize, anchor=west] {$\{w_2>-1+w_1\}$};
         \draw[thick,red,->] (3,2.5) -- (2.7,2.8);

         \draw[red] (-2,-0.5) -- (4,-0.5) node [black, font=\scriptsize, anchor=west] {$\{w_2>-2+1\}$};
         \draw[thick,red,->] (-1,-0.5) -- (-1,0);
         \fill (0,-0.5) circle (1pt);
         \node[anchor=north west, font=\scriptsize, yshift=0.05cm] at (0,-0.5) {$-1$};

         \fill (1,0) circle (2pt);
         \node[anchor=north west,font=\scriptsize] at (1,0) {$(-1,2,0,1)$};

         \node[anchor=south east,font=\scriptsize] at (6.5,-3) {$\RR_{\leq 0}\times\RR^3\cap\{w_0=-1,w_3=1\}$};
       \end{tikzpicture}

       \caption{inequalities given by $\initial_w(g_1)=x$ resp.~$\initial_w(g_2)=y$}
       \label{fig:groebnerConeNotInitiallyReducedInequalities1}
     \end{figure}

     Clearly, $w:=(-1,2,0,1)\not\in C$, even though
     $\initial_w(I) = \initial_v(I)$, since
     \begin{align*}
       \initial_w(g_1-t^2\cdot g_2) &= \initial_w(x-t^3x+t^3z-t^4z)= x,\\
       \initial_w(g_2-t\cdot g_1)   &=\initial_w(y-t^3y+t^2z-t^4z)=y,
     \end{align*}
     implying that $w\in C_v(I)$. Replacing $\{g_1,g_2\}$ with the
     initially reduced standard basis $\{g_1-t^2\cdot g_2,g_2-t\cdot
     g_1\}$, we see that we are replacing the unnecessary inequalities
     above, induced by $t^2y$ and $tx$, with the redundant inequalities
     of Example~\ref{ex:31}, induced by $t^3x$, $t^3y$ and $t^4z$.
   \end{example}


   \section{Initially reduced standard bases}\label{sec:initiallyreduced}

   In this section, we present an algorithm for the initial reduction of
   a polynomial ${\ux}$-homogeneous standard basis  in finite time. For the
   sake of simplicity, we will restrict ourselves to a special case
   which is of particular interest for the computation of tropical
   varieties over the $p$-adic numbers (see~\cite{MR15b}), though the basic ideas
   behind the  algorithm can be generalised.

   Throughout this section we assume that $K$ is some field\label{page:initialReduction}
   with non-trivial discrete valuation, $\mathfrak K$ its residue
   field, $R_\nu$ its discrete valuation ring, $p\in R_\nu$ a
   uniformising parameter and $R\subset R_\nu$ a dense noetherian
   subring with $p\in R$. Both $K$ and $R_\nu$ are assumed to be
   complete, so that we have exact sequences
   \begin{center}
     \begin{tikzpicture}[descr/.style={fill=white,inner sep=2pt}]
       \matrix (m) [matrix of math nodes, row sep=2em,
       column sep=2.5em, text height=1.5ex, text depth=0.25ex]
       { 0 & \langle p-t \rangle\cdot \Rt_{\langle p-t \rangle} [{\ux}]
         & \Rt_{\langle p-t \rangle} [{\ux}]
         & K[{\ux}] & 0, \\
         0 & \langle p-t \rangle\cdot \Rtx & \Rtx & R_\nu[{\ux}] & 0. \\ };
       \path[->,font=\scriptsize]
       (m-1-1) edge (m-1-2)
       (m-1-2) edge (m-1-3)
       (m-1-3) edge (m-1-4)
       (m-1-4) edge (m-1-5)
       (m-2-1) edge (m-2-2)
       (m-2-2) edge (m-2-3)
       (m-2-3) edge node[below] {$t\longmapsto p$} (m-2-4)
       (m-2-4) edge (m-2-5)
       (m-2-2) edge (m-1-2)
       (m-2-3) edge (m-1-3)
       (m-2-4) edge (m-1-4);
     \end{tikzpicture}
   \end{center}
   and $R/\langle p \rangle = \mathfrak{K}$.\label{page:spezialannahme}
   Moreover,  we still require
   that linear equations in $R$ are solvable, so
   that standard bases in $\Rtx$ exist and are computable. If $R=\ZZ$
   is the ring of integers, $p\in\ZZ$ a prime number and $K=\QQ_p$ the
   field of $p$-adic numbers, all properties are fulfilled
   (see~\cite{MR15b} for further interesting examples).
   We then fix the preimage $I\unlhd\Rtx$ of some homogeneous ideal in $K[{\ux}]$, which
   in particular implies that $I$ is ${\ux}$-homogeneous and $p-t\in
   I$. It is our aim to provide an algorithm which computes an
   initially reduced standard basis of $I$ w.r.t.~some $t$-local
   monomial ordering $>$ on $\Mon(t,{\ux})$, provided that the ideal
   $I$ is generated by polynomials. See Example~\ref{ex:spezialannahmen} for an
   example.

   This section has a simple monolithic structure. Because our ideals are
   all ${\ux}$-homo\-ge\-neous, the problems that commonly arise when lacking a
   well-ordering actually root in the inhomogeneity in $t$ alone. It
   turns out that these problems can be circumvented by reducing
   w.r.t.~$p-t$ diligently. Hence we begin with an algorithm
   dedicated to  that.
   Next, we continue with an algorithm for reducing a set of elements
   of the same ${\ux}$-degree w.r.t.~themselves and $p-t$. Having all
   elements sharing the same ${\ux}$-degree makes the inhomogeneity in $t$
   easy to handle. Using it, we construct an algorithm for reducing a set
   of elements of the same ${\ux}$-degree w.r.t.~themselves, $p-t$ and
   another set of elements of strictly lower ${\ux}$-degree. This is the part
   in which the difficulty of our lack of well-ordering becomes
   apparent. We then conclude the section with
   Algorithm~\ref{alg:inred} for computing an initially reduced
   standard basis by reducing a standard basis w.r.t.~itself.

   \lang{We will now formulate an algorithm to initially reduce w.r.t.~$p-t$.}

   \begin{algorithm}[$(p-t)$-Reduce]\label{alg:pReduce} \ \vspace*{-3ex}
     \begin{algorithmic}[1]
       \REQUIRE{$(g,>)$, where $>$ is a $t$-local monomial ordering and $g\in \Rtxp$ ${\ux}$-homo\-ge\-neous.}
       \ENSURE{$g'\in\Rtxp$ ${\ux}$-homogeneous with
         $\langle p-t,g'\rangle=\langle p-t,g\rangle\unlhd\Rtxp$, $\lt_>(g')=\lt_>(g)$ and initially reduced w.r.t.~$p-t$ under $>$\lang{, i.e. no term of $\tail_>(g')$ is divisible by $\lt_>(p-t)=p$}.}
       \STATE Suppose $g=\sum_{\alpha} g_\alpha\cdot {\ux}^\alpha$ with $g_\alpha\in R[t]$ and $\lt_>(g)=\lt_>(g_{\gamma})\cdot {\ux}^{\gamma}$.
       \STATE Set $g':=g_{\gamma}\cdot {\ux}^{\gamma}$ and $g'':=g-g_{\gamma}\cdot {\ux}^{\gamma}$, so that $g=g'+g''$.
       \WHILE{$g''\neq 0$}
       \STATE Suppose $g''=\sum_{\alpha} g''_\alpha\cdot {\ux}^\alpha$ with $g''_\alpha\in R[t]$ and $\lt_>(g'')=\lt_>(g''_{\gamma})\cdot {\ux}^{\gamma}$.
       \IF{$p\mid \lt_>(g''_\gamma)$}
       \STATE Let $l:=\max\{m\in\NN\mid p^m \text{ divides } \lt_>(g''_\gamma)\}>0$.
       \STATE Set $g'':=g''-\frac{\lt_>(g''_\gamma)}{p^l}\cdot (p^l-t^l)$.
       \ELSE
       \STATE Set $g':=g'+g''_\gamma\cdot {\ux}^\gamma$ and $g'':=g''-g''_\gamma\cdot {\ux}^\gamma$.
       \ENDIF
       \ENDWHILE
       \RETURN{$g'$}
     \end{algorithmic}
   \end{algorithm}
   \begin{proof}
     \emph{Termination:} We need to show that $g''=0$ eventually. Since
     all changes to $g''$ during a single iteration of the \texttt{while}
     loop happen at a distinct monomial in ${\ux}$, namely that of
     $\lm_>(g'')$, we may assume for our argument that all terms of $g''$
     have the same monomial in ${\ux}$. Suppose, in the beginning of an
     iteration,
     \begin{displaymath}
       g''=(c_{i_1}t^{i_1}+\ldots+c_{i_j}\cdot t^{i_j})\cdot {\ux}^\gamma \text{ with } i_1<\ldots<i_j.
     \end{displaymath}
     Now if $p\nmid \lt_>(c_{i_1})$, then $g''$ will be set to $0$ in
     Step $9$ and the algorithm terminates. If $p\mid \lt_>(c_{i_1})$, we
     substitute the term $c_{i_1}\cdot t^{i_1}{\ux}^\gamma$ by the term
     $c_{i_1}/p^l \cdot t^{i_1+l}{\ux}^\gamma$ in Step $7$, increasing the
     minimal $t$-degree strictly.

     Let $\nu_p(c):=\max\{m\in\NN\mid p^m \text{ divides } c\}$ denote
     the $p$-adic valuation on $R$, so that $l=\nu_p(c_{i_1})$, and
     consider the valued degree of $g''$ defined by
     \begin{displaymath}
       \max\{ \nu_p(c_{i_1})+\deg(t^{i_1}),\ldots, \nu_p(c_{i_j})+\deg(t^{i_j}) \}.
     \end{displaymath}
     This is a natural upper bound on the $t$-degree of our substitute, and hence also for the $t$-degree of all terms in our new $g''$.

     If the monomial of the substitute, $t^{i_1+l}{\ux}^\gamma$, does not
     occur in the original $g''$, then this upper bound remains the same
     for our new $g''$. If it does occur in the original $g''$, then this
     valued degree might increase depending on the sum of the
     coefficients, however the number of terms in $g''$ strictly
     decreases.

     Because $g''$ has only finitely many terms to begin with, this upper
     bound may therefore only increase a finite number of times. And
     since the minimal $t$-degree is strictly increasing, if $g''$ is not
     set to $0$, our algorithm terminates eventually.

     \emph{Correctness:} It is clear that $g'$ remains polynomial and
     ${\ux}$-homogeneous. And the only term of $g'$ that might be divisible
     by $\lt_>(p-t)=p$ is $\lt_>(g')=\lt_>(g)$, since all other terms
     passed the check in Step $5$ negatively. Hence $g'$ is initially
     reduced w.r.t.~$p-t$ under $>$.
   \end{proof}

   With this, we can begin formulating an algorithm for initially reducing a set of elements which are ${\ux}$-homogeneous of same degree in ${\ux}$.

   \begin{algorithm}[initial reduction, same degree in ${\ux}$]\label{alg:inred00} \ \vspace*{-3ex}
     \begin{algorithmic}[1]
       \REQUIRE{$(G,>)$, where $>$ is a $t$-local monomial ordering and
         $G=\{g_1,\ldots,g_k\} \subseteq \Rtxp$ a finite subset such that
         \begin{enumerate}[leftmargin=*, itemsep=0pt]
         \item $g_1,\ldots,g_k$ ${\ux}$-homogeneous of the same ${\ux}$-degree,
         \item $\lc_>(g_i)=1$ for $i=1,\ldots,k$,
         \item $\lm_>(g_i)\neq\lm_>(g_j)$ for $i\neq j$.
         \end{enumerate}}
       \ENSURE{$G'=\{g_1',\ldots,g_k'\}\subseteq \Rtxp$ such that
         \begin{enumerate}[leftmargin=*, itemsep=0pt]
         \item $g_1',\ldots,g_k'$ ${\ux}$-homogeneous of the same ${\ux}$-degree,
         \item $\lt_>(g_i')=\lt_>(g_i)$ for $i=1,\ldots,k$,
         \item $G'$ initially reduced w.r.t.~itself and $p-t$,
         \item $\langle p-t,g_1,\ldots,g_k\rangle=\langle p-t,g_1',\ldots,g_k'\rangle\unlhd\Rtx$.
         \end{enumerate}}
       \FOR{$i=1,\ldots,k$}
       \STATE Run $g_i:=(p-t)\text{-Reduce}(g_i,>)$.
       \ENDFOR
       \STATE Reorder $G=\{g_1,\ldots,g_k\}$ such that
       $\lm_>(g_1)>\ldots>\lm_>(g_k)$,
       and suppose
       \begin{displaymath}
         g_i:=\sum_{\alpha\in\NN} g_{i,\alpha}\cdot {\ux}^\alpha \text{ with } g_{i,\alpha}\in\Rt \text{ and } \lt_>(g_i)=t^{\beta_i} {\ux}^{\alpha_i}.
       \end{displaymath}
       \FOR{$i=1,\ldots,k-1$}
       \FOR{$j=i+1,\ldots,k$}
       \IF{$g_{j,\alpha_i}\neq 0$} 
       \STATE Set
       \begin{displaymath}
         g_j := \frac{g_{i,\alpha_i}}{t^{\beta_i}}\cdot g_j
         - \frac{g_{j,\alpha_i}}{t^{\beta_i}}\cdot g_i.
       \end{displaymath}
       \STATE Run $g_j:=(p-t)\text{-Reduce}(g_j,>)$.
       \ENDIF
       \ENDFOR
       \ENDFOR
       \FOR{$i=1,\ldots,k-1$}
       \FOR{$j=i+1,\ldots,k$}
       \IF{$t^{\beta_j} \mid g_{i,\alpha_j}$}
       \STATE Set
       \begin{displaymath}
         g_i := \frac{g_{j,\alpha_j}}{t^{\beta_j}}\cdot g_i
         - \frac{g_{i,\alpha_j}}{t^{\beta_j}}\cdot g_j.
       \end{displaymath}
       \STATE Run $g_i:=(p-t)\text{-Reduce}(g_i,>)$.
       \ENDIF
       \ENDFOR
       \ENDFOR
       \RETURN{$G'=\{g_1,\ldots,g_k\}$.}
     \end{algorithmic}
   \end{algorithm}
   \begin{proof}
     For the correctness of the instructions note that, by definition and
     because $>$ is $t$-local, $g_{j,\alpha_j}$ is divisible by
     $t^{\beta_j}$ and $g_{i,\alpha_i}$ is divisible by $t^{\beta_i}$ in
     Step $7$. From the assumption in Step $11$ it follows that
     $g_{i,\alpha_j}$ in Step $12$ will be divisible by
     $t^{\beta_j}$. Observe that due to
     the reordering in Step $3$ and $\lm_>(g_{j,\alpha_i})\cdot
     {\ux}^{\alpha_i}$ being a monomial
     in $g_j$ we have for $i<j$:
     \begin{displaymath}
       t^{\beta_i}\cdot {\ux}^{\alpha_i} = \lm_>(g_i) > \lm_>(g_j) >
       \lm_>(g_{j,\alpha_i})\cdot  {\ux}^{\alpha_i}.
     \end{displaymath}
     Now since $>$ is $t$-local,
     $t^{\beta_i}$ divides $\lm_>(g_{j,\alpha_i})$, hence also $g_{j,\alpha_i}$.

     It is clear that the algorithm terminates since it only consists of
     a finite number of steps, and, for the correctness, that the output
     is ${\ux}$-homogeneous, polynomial and generates the same ideal
     as the input.

     Next, we show that the leading terms of the $g_i$ are
     preserved. Observe that in Step 7 we have
     $\lm_>(\frac{g_{i,\alpha_i}}{t^{\beta_i}})=1$ by definition
     and $\lm_>(\frac{g_{j,\alpha_j}}{t^{\beta_i}})< 1$ by the previous argument.
     Due to the assumption that $\lc_>(g_i)=\lc_>(g_{i,\alpha_i})=1$ we therefore have
     \begin{displaymath}
       \lt_>(g_j)=\lt_>\left(\frac{g_{i,\alpha_i}}{t^{\beta_i}}\cdot g_j\right)
     \end{displaymath}
     and
     \begin{displaymath}
       \lm_>(g_j)>\lm_>(g_{j,\alpha_i})\cdot {\ux}^{\alpha_i}=\lm_>\left(\frac{g_{j,\alpha_i}}{t^{\beta_i}}\cdot g_i\right).
     \end{displaymath}
     In Step 12 we similarly have $\lm_>(\frac{g_{j,\alpha_j}}{t^{\beta_j}})=1$
     and $\lm_>(\frac{g_{i,\alpha_j}}{t^{\beta_j}})\leq 1$, thus
     \begin{displaymath}
       \lt_>(g_i)=\lt_>\left(\frac{g_{j,\alpha_j}}{t^{\beta_j}}\cdot g_i\right)
     \end{displaymath}
     and
     \begin{displaymath}
       \lm_>(g_i)>\lm_>(g_{i,\alpha_j})\cdot {\ux}^{\alpha_j}=\lm_>\left(\frac{g_{i,\alpha_j}}{t^{\beta_j}}\cdot g_j\right).
     \end{displaymath}
     On the whole, the leading terms of the $g_1,\ldots,g_k$ remain unchanged.

     The output is initially reduced w.r.t.~$p-t$. For
     that note that $p$ does neither divide the leading terms as they are monic
     nor the latter terms because every element of the
     output was sent through the Algorithm~\ref{alg:pReduce}.

     To see that the output $G'$ is initially reduced w.r.t.~itself, observe that the first pair of nested \texttt{for}
     loops eliminates all terms in $g_j$ with ${\ux}^{\alpha_i}$ for $i<j$. In particular, each $g_j$ is
     initially reduced w.r.t.~$g_1,\ldots,g_{j-1}$.

     Additionally, it will stay reduced w.r.t.~$g_1,\ldots,g_{j-1}$ in the second pair of nested \texttt{for}
     loops, because $g_{j+1},\ldots,g_{k}$ contain no monomial
     ${\ux}^{\alpha_i}$, $i<j$, either.

     Moreover, once $g_i$ is initially  reduced w.r.t.~$g_j$ for
     $i<j$ in Step $12$, reducing it initially w.r.t.~say
     $g_{j+1}$ will not change that out of two reasons. First, $g_{j+1}$
     contains no term with ${\ux}^{\alpha_j}$, hence adding a multiple of it
     to $g_i$ is unproblematic. Secondly,
     $\lt_>(g_{j,\alpha_j}/t^{\beta_j})=1$, which means multiplying $g_i$
     by it will not change $\lt_>(g_{i,\alpha_j})$. So if $t^{\beta_j}$
     does not divide $g_{i,\alpha_j}$ before, because $g_i$ is initially
     reduced w.r.t.~$g_j$, it does not divide $g_{i,\alpha_j}$
     after as well.

     This shows that the constant changes to $g_i$ in the second pair of
     nested \texttt{for} loops are unproblematic. Once $g_i$ has been
     initially reduced w.r.t.~$g_j$, it will stay that way while
     being reduced initially w.r.t.~$g_{j+1},\ldots,g_k$.
   \end{proof}

   \begin{example}
     Let $p=2$ and consider the set $G=\{g_1,g_2,g_3\}\subseteq \ZZ\llbracket t\rrbracket [x_1,x_2,x_3]$ with
     \begin{align*}
       g_1&:= x_1^2+t x_2^2-t^2x_3^2, \\
       g_2&:= x_2^2+tx_1^2+tx_3^2+t^2x_3^2=x_2^2+tx_1^2+(t+t^2)x_3^2, \\
       g_3&:= t^3x_3^2+t^4x_1^2+t^4x_2^2+t^5x_2^2 = t^3x_3^2+t^4x_1^2+(t^4+t^5)x_2^2,
     \end{align*}
     and the weighted ordering $>=>_w$ on $\Mon(t,{\ux})$ with weight vector
     $(-1,1,1,1)\in\RR_{<0}\times\RR^3$ and the $t$-local lexicographical
     ordering with $x_1>x_2>x_3>1>t$ as tiebreaker. \lang{The $g_i$, $i=1,2,3$, as
     well as their terms have already been ordered above.}

     We can illustrate the process with the aid of the following
     $3\times 3$-matrix:
     \begin{displaymath}
       \begin{pmatrix}
         1 & t & -t^2 \\ t & 1 & t+t^2 \\ t^4 & t^4+t^5 & t^3
       \end{pmatrix}.
     \end{displaymath}
     The entry in position $(i,j)$ contains the $R\llbracket t\rrbracket$-coefficient of
     $g_i$ w.r.t.~the $\ux$-monomial in the leading term of
     $g_j$.

     In the first pass, we begin by taking $g_1$ and reducing $g_2$ and
     $g_3$ w.r.t.~it. To eliminate the term $tx_1^2$ in $g_2$
     and $t^4x_1^2$ in $g_3$ we set
     \begin{align*}
       g_2:= & \,g_2 - t\cdot g_1 = (x_2^2+tx_1^2+tx_3^2+t^2x_3^2)-t\cdot (x_1^2+t x_2^2-t^2x_3^2) \\
       = & \,(1-t^2)\cdot x_2^2+(t+t^2+t^3)\cdot x_3^2, \\
       g_3:= & \,g_3 - t^4\cdot g_1 = (t^3x_3^2+t^4x_1^2+(t^4+t^5)x_2^2) - t^4\cdot (x_1^2+t x_2^2-t^2x_3^2) \\
       = & \,(t^3+t^6)\cdot x_3^2+t^4\cdot x_2^2.
     \end{align*}
     Note that both $g_2$ and $g_3$ remain initially reduced w.r.t.~$2-t$.
     \begin{center}
       \begin{tikzpicture}
         \matrix (m) [matrix of math nodes, row sep=1em, column sep=5em, text height=1.5ex, text depth=0.25ex, column 3/.style={anchor=base west}]
         { g_1 & \textcolor{red}{g_2} & \textcolor{red}{g_3} \\ };
         \path[->,font=\scriptsize]
         (m-1-1) edge [bend right=15] (m-1-2)
         (m-1-1) edge [bend left=15] (m-1-3);
       \end{tikzpicture}
     \end{center}
     \begin{displaymath}
       \begin{pmatrix} 1 & t & -t^2 \\ \textcolor{red}0 & 1-t^2 & t+t^2+t^3 \\ \textcolor{red}0 & t^4 & t^3+t^6 \end{pmatrix}
     \end{displaymath}
     Next, we take $g_2$ and reduce $g_3$ w.r.t.~it, i.e.
     \begin{align*}
       g_3:=&\, (1-t^2)\cdot g_3 - t^4\cdot g_2 \\
       =&\, (1-t^2)\cdot((t^3+t^6)x_3^2+t^4x_2^2) -t^4\cdot((1-t^2) x_2^2+(t+t^2+t^3)x_3^2)\\
       =&\, (t^3-2t^5-t^7-t^8)\cdot x_3^2.
     \end{align*}
     And even though $g_3$ contains a term divisible by $2$, it still remains initially reduced w.r.t.~$2-t$.
     \begin{center}
       \begin{tikzpicture}
         \matrix (m) [matrix of math nodes, row sep=1em, column sep=5em, text height=1.5ex, text depth=0.25ex, column 3/.style={anchor=base west}]
         { g_1 & g_2 & \textcolor{red}{g_3} \\ };
         \path[->,font=\scriptsize]
         (m-1-2) edge [bend left=15] (m-1-3);
       \end{tikzpicture}
     \end{center}
     \begin{displaymath}
       \begin{pmatrix} 1 & t & -t^2 \\ 0 & 1-t^2 & t+t^2+t^3 \\ 0 & \textcolor{red}{0} & t^3-2t^5-t^7-t^8 \end{pmatrix}
     \end{displaymath}

     This concludes our first pass. For the second pass, we begin by
     taking $g_1$ and reducing it w.r.t.~first $g_2$ and then
     $g_3$. Reducing $g_1$ w.r.t.~$g_2$ yields
     \begin{align*}
       g_1:=&\,(1-t^2) \cdot g_1-t\cdot g_2 \\
       =&\, (1-t^2)\cdot (x_1^2+t x_2^2-t^2x_3^2) - t\cdot ((1-t^2) x_2^2+(t+t^2+t^3)x_3^2) \\
       =&\,(1-t^2) \cdot x_1^2 +(-2t^2-t^3)\cdot x_3^2
     \end{align*}
     and reducing that w.r.t.~$2-t$ we obtain
     \begin{displaymath}
       g_1:=g_1 -(-t^2-t^3)x_3^2\cdot (2-t) = (1-t^2) \cdot x_1^2 -t^4x_3^2.
     \end{displaymath}
     Reducing $g_1$ w.r.t.~$g_3$ yields,
     \begin{align*}
       g_1:=&\,(1-2t^2-t^4-t^5)\cdot g_1 - t\cdot g_3 = (1-2t^2-t^4-t^5)(1-t^2) x_1^2\\
       =&\,(1-3t^2+t^4-t^5+t^6+t^7) \cdot x_1^2,
     \end{align*}
     which is initially reduced w.r.t.~$2-t$.
     \begin{center}
       \begin{tikzpicture}
         \matrix (m) [matrix of math nodes, row sep=1em, column sep=5em, text height=1.5ex, text depth=0.25ex, column 3/.style={anchor=base west}]
         { \textcolor{red}{g_1} & g_2 & g_3 \\ };
         \path[->,font=\scriptsize]
         (m-1-2) edge [bend left=15] (m-1-1)
         (m-1-3) edge [bend right=15] (m-1-1);
       \end{tikzpicture}
     \end{center}
     \begin{displaymath}
       \begin{pmatrix}
         1-3t^2+t^4-t^5+t^6+t^7 & \textcolor{red}0 & \textcolor{red}0 \\ 0 & 1-t^2 & t+t^2+t^3 \\ 0 & 0 & t^3-t^6-t^7-t^8
       \end{pmatrix}
     \end{displaymath}
     Finally, note that while $g_2$ has a term $t^3 x_3^2$ divisible by
     the leading term $t^3x_3$ of $g_3$, it is still initially reduced
     w.r.t.~$g_3$. This concludes our second pass and we obtain
     the initially reduced set
     \begin{align*}
       g_1&= (1-5t^2+3t^4-t^5+t^6+t^7) \cdot x_1^2, \\
       g_2&= (1-t^2)\cdot x_2^2+(t+t^2+t^3)\cdot x_3^2, \\
       g_3&= (t^3-2t^5-t^7-t^8)\cdot x_3^2.
     \end{align*}

     Observe that it is possible to reduce the number of terms at the
     cost of the coefficient size, by substituting $p$ for some of the
     $t$. One alternative initially reduced set with the same leading
     monomials as above would therefore be
     \begin{displaymath}
       g_1:= 165 \cdot x_1^2,\quad g_2:= -3\cdot x_2^2+7t\cdot x_3^2 \quad \text{and} \quad g_3:= -55t^3\cdot x_3^2.
     \end{displaymath}
   \end{example}

   Next, we need to discuss how to reduce a set $H$ of ${\ux}$-homogeneous
   elements of the same degree in ${\ux}$ w.r.t.~themselves and a set
   $G$ of ${\ux}$-homogeneous elements of lower degree. The simplest way
   is multiplying the elements of $G$ up to the same degree in ${\ux}$
   as the elements of $H$ in all possible combinations and using
   Algorithm~\ref{alg:inred00} on the resulting set. This resembles a
   brute force method in which we directly summon the worst case scenario
   to be resolved. \lang{Consequently, it is an algorithm which is good in
   cases in which the worst case is unavoidable.

   \begin{algorithm}[initial reduction, all at once]\label{alg:inred02}\ \vspace*{-3ex}
     \begin{algorithmic}[1]
       \REQUIRE{$(G,H,>)$, where $>$ a $t$-local monomial ordering,
         $H=\{h_1,\ldots,h_k\}$ and  $G$ finite subsets of $\Rtxp$ such that
         \begin{enumerate}[leftmargin=*]
         \item $h_1,\ldots,h_k$ are ${\ux}$-homogeneous of the same ${\ux}$-degree $d$,
         \item all $g\in G$ are ${\ux}$-homogeneous of ${\ux}$-degree less than $d$,
         \item $\lc_>(h_i)=1$, for $i=1,\ldots,k$, and $\lc_>(g)=1$ for all $g\in G$,
         \item $\lm_>(h_i)\neq \lm_>(h_j)$ for $i\neq j$,
         \item $\lm_>(h_i)\notin\langle \lm_>(g)\mid g\in G\rangle$ for $i=1,\ldots,k$.
         \end{enumerate}}
       \ENSURE{$H'=\{h_1',\ldots,h_k'\}\subseteq\Rtxp$ such that
         \begin{enumerate}[leftmargin=*]
         \item $h_1',\ldots,h_k'$ are ${\ux}$-homogeneous of the same ${\ux}$-degree $d$,
         \item $\lt_>(h_i')=\lt_>(h_i)$ for $i=1,\ldots,k$,
         \item $H'$ initially reduced w.r.t. $G$ and itself,
         \item $\langle p-t,G,H\rangle=\langle p-t,G,H'\rangle\unlhd\Rtx$.
         \end{enumerate}}
       \STATE Set $E:=\emptyset$.
       \FOR{$\alpha\in\NN^n, |\alpha|=d$}
       \IF{$t^\beta {\ux}^\alpha\in\lt_>(G)$ for some $\beta\in\NN$
         and $t^\beta{\ux}^\alpha\not\in\lt(E)$}
       \STATE Pick $g\in G$ with $\lt_>(g)\mid t^\beta {\ux}^\alpha$ for some minimal $\beta\in\NN$.
       \STATE Set $E:=E\cup\left\{\frac{t^\beta {\ux}^\alpha}{\lt_>(g)}\cdot g\right\}$.
       \ENDIF
       \ENDFOR
       \STATE Reduce $H\cup E$ initially with Algorithm~\ref{alg:inred00}.
       \RETURN{$H$}
     \end{algorithmic}
   \end{algorithm}
   \begin{proof}
     Due to the necessary conditions of this algorithm, $H\cup E$
     satisfies the necessary conditions for Algorithm~\ref{alg:inred00}. The correctness of this algorithm now follows
     from the correctness of Algorithm~\ref{alg:inred00}.
   \end{proof}

}
   A more sophisticated method multiplies the elements of $G$ up
   to the same degree in ${\ux}$ as the elements of $H$ when they are
   needed. In the optimal case, we can reduce the complexity drastically
   with this strategy, in the worst case we are only delaying the
   inevitable.

   \begin{algorithm}[initial reduction, step by step]\label{alg:inred01} \ \vspace*{-3ex}
     \begin{algorithmic}[1]
       \REQUIRE{$(G,H,>)$, where $>$ a $t$-local monomial ordering,
         $H=\{h_1,\ldots,h_k\}$ and $G$ finite subsets of $\Rtxp$ such that
         \begin{enumerate}[leftmargin=*, itemsep=0pt]
         \item $h_1,\ldots,h_k$ are ${\ux}$-homogeneous of the same ${\ux}$-degree $d$,
         \item all $g\in G$ are ${\ux}$-homogeneous of ${\ux}$-degree less than $d$,
         \item $\lc_>(h_i)=1$, for $i=1,\ldots,k$, and $\lc_>(g)=1$ for all $g\in G$,
         \item $\lm_>(h_i)\neq \lm_>(h_j)$ for $i\neq j$,
         \item $\lm_>(h_i)\notin\langle \lm_>(g)\mid g\in G\rangle$ for $i=1,\ldots,k$.
         \end{enumerate}}
       \ENSURE{$H'=\{h_1',\ldots,h_k'\}\subseteq\Rtxp$ such that
         \begin{enumerate}[leftmargin=*, itemsep=0pt]
         \item $h_1',\ldots,h_k'$ are ${\ux}$-homogeneous of the same ${\ux}$-degree $d$,
         \item $\lt_>(h_i')=\lt_>(h_i)$ for $i=1,\ldots,k$,
         \item $H'$ initially reduced w.r.t. $G$ and itself,
         \item $\langle p-t,G,H\rangle=\langle p-t,G,H'\rangle\unlhd\Rtx$.
         \end{enumerate}}
       \STATE Reduce $H$ initially using Algorithm~\ref{alg:inred00} and set $E=\emptyset$.
       \STATE Suppose $h_i=\sum_{\alpha\in\NN^n}h_{i,\alpha}\cdot {\ux}^\alpha$ with $h_{i,\alpha}\in\Rt$,
       create the disjoint union
       \begin{displaymath}
         T:=\{(\lt_>(h_{i,\alpha})\cdot  {\ux}^\alpha,i)\mid
         \alpha\in\NN^n \text{ and } \lt_>(h_{i,\alpha})\cdot  {\ux}^\alpha<\lt_>(h_i)\},
       \end{displaymath}
       a working list of terms to be checked for potential reduction w.r.t.~$G$.
       \WHILE{$T\neq\emptyset$}
       \STATE Pick $(s,i)\in T$ with $\lm_>(s)$ maximal.
       \IF{$\lt_>(g)\mid s$ for some $g\in G$}
       \STATE Pick $g\in G$, $\lt_>(g)\mid s$, and set $E:=E\cup\left\{\frac{\lm_>(s)}{\lm_>(g)}\cdot g\right\}$.
       \STATE Reduce $H\cup E$ initially using Algorithm~\ref{alg:inred00}.
       \STATE Update the working list:
       \begin{displaymath}
         T:=\{(\lt_>(h_{i,\alpha})\cdot  {\ux}^\alpha, i)\mid
         \alpha\in\NN^n \text{ and } \lm_>(h_{i,\alpha})\cdot  {\ux}^\alpha<
         \lm_>(s) \}.
       \end{displaymath}
       \ELSE
       \STATE Set $T:=T\setminus\{(h_i,s)\}$.
       \ENDIF
       \ENDWHILE
       \RETURN{$H$}
     \end{algorithmic}
   \end{algorithm}
   \begin{proof}
     For the termination note that in each iteration of the while loop
     either the set of extra polynomials $E$ increases or the working
     list $T$ decreases.
     Also because each $s$ is chosen to be maximal, each other term in the
     working list $T$ with the same ${\ux}$-monomial must have a higher
     $t$-degree and is therefore eliminated alongside $s$ in the initial
     reduction of $H\cup E$. Because the updated $T$ only includes relevant
     terms smaller than $s$, the ${\ux}$-monomial of $s$ is effectively
     eliminated in all working lists to follow. Hence each elements of $E$
     will always have a distinct ${\ux}$-monomial which is of degree $d$. Thus
     $E$ has a maximal size after which the algorithm will terminate in a
     finite number of steps.

     For the correctness of the instructions, observe that $H\cup E$
     satisfies the conditions for Algorithm~\ref{alg:inred00} by
     assumption. For the correctness of the output, it is obvious that
     the leading terms of $H$ are preserved, that $H$ is initially
     reduced w.r.t.~itself and that its elements are
     ${\ux}$-homogeneous as well as polynomial. To show that $H$ is initially
     reduced w.r.t.~$G$, observe that, apart from the terms
     eliminated, any term altered in the initial reduction of $H\cup E$
     is strictly smaller than $s$. Because $s$ was chosen to be maximal,
     the updated working list therefore contains all relevant terms that
     have been altered or that have yet to be checked for reduction. Thus
     in the output any relevant term has been negatively checked for
     divisibility by an element of $G$.
   \end{proof}

   \begin{remark}
     Note that in Step 6 of Algorithm~\ref{alg:inred01}, we
     multiply $g$ by a power of $t$ even though it is not necessary
     for correctness. The reason is as follows:

     Recall Algorithm~\ref{alg:inred00}, which consists of two big nested
     \texttt{for} loops. In the first pass from Step 4 to 8 we take each
     $g_i$, $i=1,\ldots,k-1$, and reduce all $g_j$, $i<j$,
     w.r.t.~it. In the second pass from Step 9 to 13 we take each
     $g_i$, $i=1,\ldots,k-1$, and reduce it w.r.t.~all $g_j$, $i<j$.

     Now suppose we enter the Algorithm with $H\cup\{g\}$, where
     $H=\{h_1,\ldots,h_k\}$ is already initially reduced w.r.t.~itself and $p-t$. Suppose furthermore
     \begin{displaymath}
       \lm_>(h_1)> \ldots > \lm_>(h_l)>\lm_>(g)>\lm_>(h_{l+1}) > \ldots > \lm_>(h_k).
     \end{displaymath}

     By assumption, taking each $h_i$, $i=1,\ldots,l$, and reducing all
     $h_j$, $i<j$, w.r.t.~it is obsolete. The first necessary
     action is reducing $g$ w.r.t.~$h_1,\ldots,h_l$.
     \begin{center}\begin{tikzpicture}[descr/.style={fill=white,inner sep=2pt}]
         \matrix (m) [matrix of math nodes, row sep=2em,
         column sep=1.5em, text height=1.5ex, text depth=0.25ex]
         { h_1 & h_2 &\ldots & h_l & g & h_{l+1} & \ldots & h_{k-1} & h_k \\};
         \path[->,font=\scriptsize]
         (m-1-1) edge [bend left = 30] (m-1-5)
         (m-1-2) edge [bend left = 22.5] (m-1-5)
         (m-1-4) edge [bend left = 15] (m-1-5);
       \end{tikzpicture}\end{center}

     Next, we consider the $h_i$, $i=l+1,\ldots,k$. Each $h_i$ is already
     reduced w.r.t.~$h_1,\ldots,h_l$ and remains so after
     reducing it w.r.t.~$g$, as the ${\ux}$-monomials of their
     leading monomials were already completely eliminated in $g$
     previously. Hence we may reduce each $h_i$, $i=l+1,\ldots,k$,
     w.r.t.~$g$ without inducing the need of reducing them w.r.t.~$h_1,\ldots,h_l$ again.
     \begin{center}\begin{tikzpicture}[descr/.style={fill=white,inner sep=2pt}]
         \matrix (m) [matrix of math nodes, row sep=2em,
         column sep=1.5em, text height=1.5ex, text depth=0.25ex]
         { h_1 & h_2 &\ldots & h_l & g & h_{l+1} & \ldots & h_{k-1} & h_k \\};
         \path[->,font=\scriptsize]
         (m-1-5) edge [bend right = 30] (m-1-9)
         (m-1-5) edge [bend right = 22.5] (m-1-8)
         (m-1-5) edge [bend right = 15] (m-1-6);
       \end{tikzpicture}\end{center}

     However, $g$ might contain a term with monomial $t^2 x$, which might
     not be reducible w.r.t.~$\lt_>(h_j)=t^3 x$, but if $g$ is
     multiplied by $t$ while reducing another element w.r.t.~it,
     we do create a term that is reducible. Thus, we need to reduce each
     $h_i$, $i=l+1,\ldots,k$ w.r.t.~$h_j$, $j=l+1,\ldots,i$
     again and this concludes our first pass.
     \begin{center}\begin{tikzpicture}[descr/.style={fill=white,inner sep=2pt}]
         \matrix (m) [matrix of math nodes, row sep=2em,
         column sep=1.5em, text height=1.5ex, text depth=0.25ex]
         { h_1 & h_2 &\ldots & h_l & g & h_{l+1} & \ldots & h_{k-1} & h_k \\};
         \path[->,font=\scriptsize]
         (m-1-6) edge [bend left = 30] (m-1-9)
         (m-1-6) edge [bend left = 22.5] (m-1-8)
         (m-1-6) edge [bend left = 15] (m-1-7);
         \path[->,font=\scriptsize]
         (m-1-7) edge [bend right = 15] (m-1-8)
         (m-1-8) edge [bend right = 15] (m-1-9);
       \end{tikzpicture}\end{center}

     For the second pass, taking each $h_i$, $i=1,\ldots,l$, and reducing
     it w.r.t.~all $h_j$, $j=i+1,\ldots,l$, is unnecessary. The
     first necessary step is to take each $h_i$, $i=1,\ldots,l$, and
     reduce it w.r.t.~the newly added $g$. Similar to a previous
     step, each $h_i$ remains reduced w.r.t.~all $h_j$,
     $j=i+1,\ldots,l$.
     \begin{center}\begin{tikzpicture}[descr/.style={fill=white,inner sep=2pt}]
         \matrix (m) [matrix of math nodes, row sep=2em,
         column sep=1.5em, text height=1.5ex, text depth=0.25ex]
         { h_1 & h_2 &\ldots & h_l & g & h_{l+1} & \ldots & h_{k-1} & h_k \\};
         \path[->,font=\scriptsize]
         (m-1-5) edge [bend left = 30] (m-1-1)
         (m-1-5) edge [bend left = 22.5] (m-1-2)
         (m-1-5) edge [bend left = 15] (m-1-4);
       \end{tikzpicture}\end{center}

     Afterwards, while each $h_i$ remains reduced w.r.t.~all
     $h_j$, $j=i+1,\ldots,l$, it nonetheless needs to be reduced w.r.t.~$h_{l+1},\ldots,h_k$ again.
     \begin{center}\begin{tikzpicture}[descr/.style={fill=white,inner sep=2pt}]
         \matrix (m) [matrix of math nodes, row sep=2em,
         column sep=1.5em, text height=1.5ex, text depth=0.25ex]
         { h_1 & h_2 &\ldots & h_l & g & h_{l+1} & \ldots & h_{k-1} & h_k \\};
         \path[->,font=\scriptsize]
         (m-1-6) edge [bend right = 15] (m-1-1)
         (m-1-8) edge [bend right = 17.5] (m-1-1)
         (m-1-9) edge [bend right = 20] (m-1-1);
         \path[->,font=\scriptsize]
         (m-1-6) edge [bend left = 15] (m-1-2)
         (m-1-8) edge [bend left = 17.5] (m-1-2)
         (m-1-9) edge [bend left = 20] (m-1-2);
       \end{tikzpicture}\end{center}

     Next in the second pass, we take $g$ and reduce it w.r.t.~$h_{l+1},\ldots,h_k$.
     \begin{center}\begin{tikzpicture}[descr/.style={fill=white,inner sep=2pt}]
         \matrix (m) [matrix of math nodes, row sep=2em,
         column sep=1.5em, text height=1.5ex, text depth=0.25ex]
         { h_1 & h_2 &\ldots & h_l & g & h_{l+1} & \ldots & h_{k-1} & h_k \\};
         \path[->,font=\scriptsize]
         (m-1-9) edge [bend right = 30] (m-1-5)
         (m-1-8) edge [bend right = 22.5] (m-1-5)
         (m-1-6) edge [bend right = 15] (m-1-5);
       \end{tikzpicture}\end{center}

     And finally, we take each $h_i$, $i=l+1,\ldots,k-1$ and reduce it
     w.r.t.~all $h_j$, $i<j$, as reducing them w.r.t.~$g$ earlier
     might have broken their reducedness property.
     \begin{center}\begin{tikzpicture}[descr/.style={fill=white,inner sep=2pt}]
         \matrix (m) [matrix of math nodes, row sep=2em,
         column sep=1.5em, text height=1.5ex, text depth=0.25ex]
         { h_1 & h_2 &\ldots & h_l & g & h_{l+1} & \ldots & h_{k-1} & h_k \\};
         \path[->,font=\scriptsize]
         (m-1-9) edge [bend right = 30] (m-1-6)
         (m-1-8) edge [bend right = 22.5] (m-1-6)
         (m-1-7) edge [bend right = 15] (m-1-6);
         \path[->,font=\scriptsize]
         (m-1-9) edge [bend left = 15] (m-1-8)
         (m-1-8) edge [bend left = 15] (m-1-7);
       \end{tikzpicture}\end{center}

     It can be seen that a position of $g$ more to the right minimises
     the number of reductions needed. This implies that $\lm_>(g)$ should
     be as small as possible, and since its monomial in $x$ is fixed,
     this means that it should have as high a degree in $t$ as possible.

     Note that increasing the degree in $t$ to increase performance is
     not risk-free a priori. For example, suppose we had a $g\in G$ with
     $\lt_>(g)=x$ and we were to add $t^5y\cdot g$ to $E$ in order to
     reduce a term with monomial $t^5xy$. Then any subsequent term with
     monomial $t^4xy$ would require adding an additional multiple of $g$
     to $E$. However, since our working list $T$ is worked off in an
     order induced by a $t$-local monomial ordering $>$, any later $s'$
     picked in Step~4 with the same monomial in $x$ necessarily has to
     have a higher degree in $t$. Thus this cannot happen in our
     algorithm.
   \end{remark}

   With Algorithm\lang{s~\ref{alg:inred02} and}~\ref{alg:inred01}, writing an
   algorithm for computing an initially reduced standard basis becomes a
   straightforward task. All we need to adhere is to proceed ${\ux}$-degree
   by ${\ux}$-degree while repeatedly applying the previous algorithm.

   \begin{algorithm}[initially reduced standard basis]\label{alg:inred} \ \vspace*{-3ex}
     \begin{algorithmic}[1]
       \REQUIRE{$(F,>)$, where $F\subset I$ an ${\ux}$-homogeneous, polynomial generating set of $I$ containing $p-t$.}
       \ENSURE{$G\subseteq I$ an ${\ux}$-homogeneous, polynomial and initially reduced standard basis of $I$.}
       \STATE Compute an ${\ux}$-homogeneous standard basis $G''$ of
       $I=\langle F \rangle$ with \cite[Alg.~2.16 or Alg.~3.8]{MRW15}.
       \STATE Set $G':=\emptyset$.
       \FOR{$g\in G''$ with $p\nmid \lt_>(g)$}
       \IF{$\lc_>(g)\neq 1$}
       \STATE Since $1\in \langle \lc_>(g),p\rangle$, find $a,b\in R$ such that
       \begin{displaymath}
         1=a\cdot\lc_>(g)+b\cdot p.
       \end{displaymath}
       \STATE Set
       \begin{displaymath}
         g:=a\cdot g + b\cdot\lm_>(g)\cdot(p-t),
       \end{displaymath}
       so that $\lc_>(g)=1$.
       \ENDIF
       \STATE Set $G':=G'\cup\{g\}$.
       \ENDFOR
       \STATE Minimise the standard basis $G'$ by gradually removing elements $g\in G$ with $\lm_>(g')\mid \lm_>(g)$ for some $g'\in G$, $g'\neq g$.
       \STATE Set $G:=\emptyset$
       \WHILE{$G'\neq \emptyset$}
       \STATE Set
       \begin{align*}
         d&:=\min\{\deg_{\ux}(g)\mid g\in G'\}, \\
         H'&:=\{g\in G'\mid \deg_{\ux}(g)=d \}, \\
         G'&:=\{g\in G'\mid \deg_{\ux}(g)>d \}.
       \end{align*}
       \STATE Reduce $H'$ initially w.r.t.~$G$, $p-t$ and itself using
       Algorithm\lang{s~\ref{alg:inred02} or}~\ref{alg:inred01} and let $H$ be the output of that initial reduction.
       \STATE Set $G:=G\cup H$.
       \ENDWHILE
       \RETURN{$G\cup \{p-t\}$.}
     \end{algorithmic}
   \end{algorithm}
   \begin{proof}
     It is clear that $G$ is a standard basis of $I$, as we are merely normalising the leading coefficients of the standard bases $G''$.
     It is also obvious that $G$ is polynomial and ${\ux}$-homogeneous.
     The initial reducedness of $G$ follows from the correctness of Algorithm\lang{s~\ref{alg:inred02} or}~\ref{alg:inred01}.
   \end{proof}


   \section{How to compute the Gr\"obner fan}\label{sec:computation}

   In this section, we describe algorithms for computing the Gr\"obner
   fan of an ideal $I\unlhd\Rtx$ as in our convention on
   Page~\pageref{page:basering}, provided that we are able to compute
   initially reduced standard bases where needed. While computing a Gr\"obner fan can
   be as seemingly simple as computing maximal Gr\"obner cones $C_>(I)$
   w.r.t.~random monomial orderings $>$ until the whole weight
   space $\RR_{\leq 0}\times\RR^n$ is filled, sensible algorithms
   avoid computing initially reduced standard bases of
   $I$ from scratch. The algorithms in this section are adjusted versions
   of the algorithms found in Chapter $4$ of Jensen's dissertation
   \cite{Jensen07} (see also \cite{FJT07}), though some of the ideas involved originate in
   Collart, Kalkbrenner and Mall's work on the Gr\"obner walk
   \cite{CKM97}.

   We start with an algorithm for computing witnesses of weighted
   homogeneous elements in initial ideals, which can then be used to lift
   standard bases of initial ideals to initially reduced standard bases
   of the original ideal. Adding in some statements about the
   perturbation of initial ideals, we obtain an algorithm which allows us
   to flip initially reduced standard bases of one ordering to initially
   reduced standard bases of an adjacent ordering. This algorithm can
   then be used to construct the Gr\"obner fan, requiring us to compute
   the standard basis of $I$ from scratch only once.

   Note that all polynomial computations in our algorithms,
   if given polynomial input, terminate and return polynomial output
   themselves, provided that we are able to initially reduce a
   standard basis as e.g.~in Algorithm~\ref{alg:inred}.

   \begin{algorithm}[Witness]\label{alg:witness} \ \vspace*{-3ex}
     \begin{algorithmic}[1]
       \REQUIRE{$(h,H,G,>)$, where
         \begin{itemize}[leftmargin=*, itemsep=0pt]
         \item $>$ a weighted $t$-local monomial ordering on $\Mon(t,{\ux})$,
         \item $G=\{g_1,\ldots,g_k\}\subseteq I$ an initially reduced standard basis of $I$ w.r.t.~$>$,
         \item $H=\{h_1,\ldots,h_k\}$ with
           $h_i=\initial_w(g_i)$ for some $w\in C_>(I)$ with $w_0<0$,
         \item $h\in\initial_w(I)$ weighted homogeneous w.r.t.~$w$.
         \end{itemize}}
       \ENSURE{$f\in I$ such that $\initial_w(f)=h$}
       \STATE Use \cite[Alg.~1.13]{MRW15} to compute a homogeneous determinate division with remainder w.r.t.~$>$,
       \begin{displaymath}
         (\{q_1,\ldots,q_k\},r)=\HDDwR(h,\{h_1,\ldots,h_k\},>),
       \end{displaymath}
       so that $h = q_1\cdot h_1 + \hdots + q_k\cdot h_k$ and $r=0$.
       \RETURN{$f:=q_1\cdot g_1+ \hdots + q_k\cdot g_k$}
     \end{algorithmic}
   \end{algorithm}
   \begin{proof}
     By Proposition~\ref{prop:BasisOfInitialIdeal}, $H$
     is a standard basis of $\initial_w(I)$, therefore the division of $h$ will always yield remainder $0$.

     Since $h,h_1,\ldots,h_k$ are weighted homogeneous w.r.t.~$w$,
     so are $q_1,\ldots,q_k$.
     Hence
     \begin{displaymath}
       \initial_w(f)=\underbrace{\initial_w(q_1)\cdot\initial_w(g_1)}_{=q_1\cdot \,h_1}+\hdots
       +\underbrace{\initial_w(q_k)\cdot\initial_w(g_k)}_{=q_k\cdot \,h_k} = h.
     \end{displaymath}
     Also note that the division with remainder will always terminate, as the weighted degree
     cannot become arbitrarily small since the ideal $\initial_w(I)$
     is homogeneous in ${\ux}$ and weighted homogeneous overall.
   \end{proof}

   As announced, we immediately obtain an algorithm which allows us to
   lift a standard basis of an initial ideal to an initially reduced
   standard basis of $I$, assuming we have a standard basis of $I$
   w.r.t.~an adjacent ordering at our disposal. 

   \begin{algorithm}[Lift]\label{alg:lift} \ \vspace*{-3ex}
     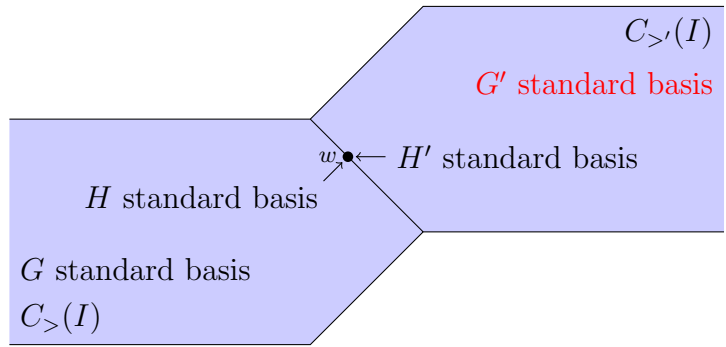
\begin{figure}[h]
       \centering
       \begin{tikzpicture}
         \fill[color=blue!20] (0,0) -- (4,0) -- (5.5,-1.5) -- (4,-3) -- (0,-3) -- cycle;
         \fill[color=blue!20] (4,0) -- (5.5,1.5) -- (9.5,1.5) -- (9.5,-1.5) -- (5.5,-1.5) -- cycle;
         \draw (0,0) -- (4,0) -- (5.5,-1.5) -- (4,-3) -- (0,-3);
         \draw (4,0) -- (5.5,1.5) -- (9.5,1.5);
         \draw (5.5,-1.5) -- (9.5,-1.5);
         \node[anchor=south west] (C1) at (0,-3) {$C_>(I)$};
         \node[anchor=south west, yshift=0cm] at (C1.north west) {$G$ standard basis};
         \node[anchor=north east, xshift=-0.25cm, yshift=-0.25cm] (H1) at (4.5,-0.5) {$H$ standard basis};
         \node[anchor=north east] (C2) at (9.5,1.5) {$C_{>'}(I)$};
         \node[anchor=north east, red] at (C2.south east) {$G'$ standard basis};
         \node[anchor=west, xshift=0.5cm] (H2) at (4.5,-0.5) {$H'$ standard basis};
         \fill (4.5,-0.5) circle (2pt);
         \node[anchor=east, font=\scriptsize] at (4.5,-0.5) {$w$};

         \draw[->, shorten >= 3pt, shorten <= -3pt] (H1.north east) -- (4.5,-0.5);
         \draw[->, shorten >= 3pt] (H2.west) -- (4.5,-0.5);
       \end{tikzpicture}
       \caption{lift of standard bases}
       \label{fig:liftOfStandardBases}
     \end{figure}
     \begin{algorithmic}[1]
       \REQUIRE{$(H',>',H,G,>)$, where
         \begin{itemize}[leftmargin=*, itemsep=0pt]
         \item $>$ a weighted $t$-local monomial ordering on $\Mon(t,{\ux})$ with weight vector in $\RR_{<0}\times\RR^n$,
         \item $G=\{g_1,\ldots,g_k\}\subseteq I$ an initially reduced standard basis of $I$ w.r.t.~$>$,
         \item $H=\{h_1,\ldots,h_k\}$ with $h_i=\initial_w(g_i)$ for
           some $w\in C_>(I)$ with $w_0<0$,
         \item $>'$ a $t$-local monomial ordering such that $w\in C_>(I)\cap C_{>'}(I)$,
         \item $H'\subseteq \initial_w(I)$ a weighted homogeneous standard basis w.r.t.~$>'$.
         \end{itemize}}
       \ENSURE{$G'\subseteq I$, an initially reduced standard basis of $I$ w.r.t.~$>'$.
       }
       \STATE Set $G'':=\{\Witness(h, H, G, >)\mid h\in H'\}$.
       \STATE Reduce $G''$ initially
       w.r.t.~$>'$ and obtain $G'$.
       \RETURN{$G'$.}
     \end{algorithmic}
   \end{algorithm}
   \begin{proof}
     Consider a witness $g:=\Witness(h, w, G, >)$ for some $h\in H'$.
     Then, by Lemma~\ref{lem:Cmembership}, we have $\lt_{>'}(g)=\lt_{>'}(\initial_w(g))=\lt_{>'}(h)$,
     and thus
     \begin{displaymath}
       \langle \lt_{>'}(g) \mid g\in G'' \rangle = \langle \lt_{>'}(h) \mid h\in H' \rangle = \lt_{>'}(\initial_w(I))
       \underset{\ref{lem:LeadOfInitialIdeal}}{\overset{\text{Lem.}}{=}} \lt_{>'}(I).
     \end{displaymath}
     Thus $G''$ is a standard basis of $I$ w.r.t.~$>'$ and $G'$ is even
     initially reduced.
   \end{proof}

   \begin{example}
     Consider again the ideal from Example~\ref{ex:31}
     \begin{displaymath}
       I=\langle g_1=x-t^3x+t^3z-t^4z,g_2=y-t^3y+t^2z-t^4z\rangle\unlhd \ZZ\llbracket t\rrbracket [x,y,z]
     \end{displaymath}
     and the weighted monomial ordering $>=>_v$ on $\Mon(t,x,y,z)$ with
     weight vector $v=(-1,3,3,3)\in\RR_{<0}\times\RR^3$ and the $t$-local
     lexicographical ordering such that $x>y>z>1>t$ as tiebreaker.
     We have already seen that
     \begin{displaymath}
       C_>(I)= \overline{\{w\in\RR_{<0}\times\RR^n\mid w_1 \geq 3w_0+w_3 \text{ and } w_2\geq 2w_0+w_3 \}}.
     \end{displaymath}
     Picking $w=(-1,2,-1,1)$ in a facet of $C_>(I)$, Proposition~\ref{prop:BasisOfInitialIdeal} implies
     \begin{displaymath}
       \initial_w(I) = \langle \initial_w(g_1), \initial_w(g_2) \rangle = \langle x, y+t^2z \rangle.
     \end{displaymath}
     It is easy to see that $\{x, y+t^2z\}$ is a standard basis of
     $\initial_w(I)$ regardless which monomial ordering is chosen. Since
     using Algorithm~\ref{alg:witness} on $\initial_w(g_1)$ and
     $\initial_w(g_2)$ yields $g_1$ and $g_2$ respectively, Algorithm~\ref{alg:lift} therefore implies that $\{g_1,g_2\}$ is also a
     standard basis for the adjacent monomial ordering $>'$ on the other
     side of the facet containing $w$.

     Moreover, since $>'$ has to induce a different leading ideal by
     definition, and the leading terms of $g_1$ and $g_2$ w.r.t.~$>'$
     have to occur in their initial forms by
     Lemma~\ref{lem:Cmembership}, we see that the adjacent leading
     ideal is  $\langle x, t^2z\rangle$.
   \end{example}

   An easy way to construct orderings adjacent to $>$ is by connecting
   two weight vectors in series, the first a weight vector lying on a
   facet and the second an outer normal vector of the facet.

   \begin{proposition}\label{prop:PertubationOfInitialIdeal}
     Let $>$ be a $t$-local monomial ordering, $w \in C_>(I)$ with $w_0<0$ and
     $v\in\RR^{n+1}$. Let $>_{(w,v)}$ denote the $t$-local monomial
     ordering given by
     \begin{align*}
       & t^\beta \cdot  {\ux}^\alpha >_{(w,v)} t^{\beta'} \cdot  {\ux}^{\alpha'} \quad :\Longleftrightarrow \quad \\
       & \qquad (\beta,\alpha)\cdot w > (\beta',\alpha')\cdot w, \\
       & \qquad \text{or } (\beta,\alpha)\cdot w = (\beta',\alpha')\cdot w \text{ and }
       (\beta,\alpha)\cdot v > (\beta',\alpha')\cdot v, \\
       & \qquad \text{or } (\beta,\alpha)\cdot w = (\beta',\alpha')\cdot w \text{ and }
       (\beta,\alpha)\cdot v = (\beta',\alpha')\cdot v \\
       & \qquad \qquad \qquad    \text{  and } t^\beta \cdot  {\ux}^\alpha > t^{\beta'} \cdot  {\ux}^{\alpha'}.
     \end{align*}
     Then $w = C_>(I)\cap C_{>_{(w,v)}}(I)$ and for $\varepsilon>0$
     sufficiently small
     \begin{displaymath}
       w+\varepsilon \cdot v\in C_{>_{(w,v)}}(I).
     \end{displaymath}
     In particular for these $\varepsilon$ we have
     \begin{math}
       \initial_{w+\varepsilon v}(I) = \initial_v(\initial_w(I)).
     \end{math}
   \end{proposition}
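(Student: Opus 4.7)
The plan is to route everything through an initially reduced standard basis $G^{*}$ of $I$ with respect to $>_{(w,v)}$, which exists by Proposition~\ref{prop:existenceinitiallyreducedstb}. Three assertions need to be settled: the membership $w\in C_{>_{(w,v)}}(I)$ (which combined with the hypothesis $w\in C_{>}(I)$ gives the first claim), the membership $w+\varepsilon v\in C_{>_{(w,v)}}(I)$ for all small $\varepsilon>0$, and the ideal identity $\initial_{w+\varepsilon v}(I)=\initial_v(\initial_w(I))$.

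The backbone of the first two assertions is the tautological observation that for every polynomial $f\in\Rtxp$ one has
\[
\lt_{>_{(w,v)}}(f)=\lt_{>_{(w,v)}}(\initial_w(f))=\lt_{>_{(w,v)}}(\initial_v(\initial_w(f))),
\]
directly from the definition of $>_{(w,v)}$: the primary criterion is $w$-weight and the secondary is $v$-weight, so the $>_{(w,v)}$-leading term of $f$ necessarily survives the successive extractions of $w$- and $v$-maximal terms, with the $>$-tiebreaker selecting it at the end. Applied to each $g^{*}\in G^{*}$, this gives $\lt_{>_{(w,v)}}(\initial_w(g^{*}))=\lt_{>_{(w,v)}}(g^{*})$, and Lemma~\ref{lem:Cmembership} delivers $w\in C_{>_{(w,v)}}(I)$.

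For the second assertion, since $G^{*}$ is finite, a uniform $\varepsilon_0>0$ can be chosen such that for each $g^{*}\in G^{*}$ and each pair of exponent vectors occurring in $g^{*}$, the strict ordering of their $w$-weights is preserved after passing to $(w+\varepsilon v)$-weights for every $\varepsilon\in(0,\varepsilon_0)$, while simultaneously $(w+\varepsilon v)_0<0$ (using $w_0<0$). For such $\varepsilon$ the $(w+\varepsilon v)$-maximal terms of $g^{*}$ are exactly the $v$-maximal among the $w$-maximal terms, so one obtains the termwise identity $\initial_{w+\varepsilon v}(g^{*})=\initial_v(\initial_w(g^{*}))$. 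The same tautology, now applied to $\initial_{w+\varepsilon v}(g^{*})$, yields $\lt_{>_{(w,v)}}(\initial_{w+\varepsilon v}(g^{*}))=\lt_{>_{(w,v)}}(g^{*})$, whence Lemma~\ref{lem:Cmembership} concludes $w+\varepsilon v\in C_{>_{(w,v)}}(I)$.

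The ideal identity will follow by a sandwich argument. Proposition~\ref{prop:BasisOfInitialIdeal} applied to $w+\varepsilon v\in C_{>_{(w,v)}}(I)$ produces $\{\initial_{w+\varepsilon v}(g^{*})\}=\{\initial_v(\initial_w(g^{*}))\}$ as a standard basis, hence a generating set, of $\initial_{w+\varepsilon v}(I)$, and each such generator lies visibly in $\initial_v(\initial_w(I))$, giving one inclusion. The reverse inclusion is the main obstacle and will be handled in the spirit of the proof of Lemma~\ref{lem:LeadOfInitialIdeal}: Proposition~\ref{prop:BasisOfInitialIdeal} also gives $\{\initial_w(g^{*})\}$ as a standard basis of $\initial_w(I)$ with respect to $>_{(w,v)}$, and for any $h\in\initial_w(I)$ one takes a weak division with remainder $u\cdot h=\sum_i q_i\cdot\initial_w(g_i^{*})$ with $\lt_{>_{(w,v)}}(u)=1$ and no cancellation of top $>_{(w,v)}$-degree on the right. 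Passing to $v$-initial forms on both sides -- on the $w$-homogeneous elements $\initial_w(g_i^{*})$ the ordering $>_{(w,v)}$ agrees with the $v$-weighted refinement of $>$, so taking $\initial_v$ preserves the leading behaviour controlling the division -- expresses $\initial_v(h)$ as an $R[t,\ux]$-combination of the $\initial_v(\initial_w(g_i^{*}))$, closing the argument.
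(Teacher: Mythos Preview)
Your treatment of the first two assertions is correct and essentially identical to the paper's: both rely on the tautology $\lt_{>_{(w,v)}}(f)=\lt_{>_{(w,v)}}(\initial_v(\initial_w(f)))$, an initially reduced standard basis $G^{*}$ with respect to $>_{(w,v)}$, and Lemma~\ref{lem:Cmembership}.

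The divergence is in the ideal identity, and here your reverse inclusion has a genuine gap. From the weak division $u\cdot h=\sum_i q_i\cdot\initial_w(g_i^{*})$ you pass to $v$-initial forms and obtain
\[
\initial_v(u)\cdot\initial_v(h)=\sum_{j}\initial_v(q_{i_j})\cdot\initial_v(\initial_w(g_{i_j}^{*}))\in\langle\initial_v(\initial_w(g^{*}))\mid g^{*}\in G^{*}\rangle,
\]
but you then claim this ``expresses $\initial_v(h)$ as an $R[t,\ux]$-combination'' of the generators. It does not: you only know $\lt_>(\initial_v(u))=1$, and such an element is \emph{not} a unit in $R[t,\ux]$, which is where initial ideals live by definition. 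The analogous step in Lemma~\ref{lem:LeadOfInitialIdeal} succeeds precisely because the target ideal $\lt_>(I)$ is generated by terms, so that Lemma~\ref{lem:idealGeneratedByTerms1} applies; your target $\langle\initial_v(\initial_w(g^{*}))\rangle$ is not term-generated, and that lemma is unavailable.

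The paper sidesteps the whole division argument by invoking Proposition~\ref{prop:BasisOfInitialIdeal} a second time. You already observed that $\{\initial_w(g^{*})\}$ is an initially reduced standard basis of $\initial_w(I)$ with respect to $>_{(w,v)}$; the same tautology and Lemma~\ref{lem:Cmembership} show $w+\varepsilon v\in C_{>_{(w,v)}}(\initial_w(I))$, and then Proposition~\ref{prop:BasisOfInitialIdeal} applied to $\initial_w(I)$ at $w+\varepsilon v$ gives directly that $\{\initial_{w+\varepsilon v}(\initial_w(g^{*}))\}=\{\initial_v(\initial_w(g^{*}))\}$ generates $\initial_{w+\varepsilon v}(\initial_w(I))=\initial_v(\initial_w(I))$. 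This yields both inclusions in one stroke and avoids the unit problem entirely.
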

   \begin{proof}
     By definition we have
     $\lt_{>_{(w,v)}}(g)=\lt_{>_{(w,v)}}(\initial_w(g))$ for any
     $g\in\Rtx$, which implies $w\in C_{>_{(w,v)}}(I)$ by Lemma~\ref{lem:Cmembership}.

     Next, let $G$ be an initially reduced standard basis of $I$ w.r.t.~that ordering.
     Observe that every $g\in G$,
     \begin{displaymath}
       g = \underbrace{\hdots \hdots \hdots \hdots \hdots}_{\initial_w(g)}
       + \underbrace{\hdots \hdots \hdots \hdots \hdots}_{\text{rest}}\;,
     \end{displaymath}
     has a distinct degree gap between the terms of highest weighted
     degree and the rest. As the weighted degree varies continuously
     under the weight vector, choosing $\varepsilon > 0$ sufficiently
     small ensures that the $(w+\varepsilon\cdot v)$-weighted degrees of
     the terms in $\initial_w(g)$ remain higher than those of the rest.
     Thus $\initial_{w+\varepsilon\cdot v}(g)$ is the sum of those
     terms of $\initial_w(g)$ that have maximal $v$-weighted degree,
     i.e. $\initial_{w+\varepsilon\cdot v}(g) = \initial_v(\initial_w(g))$.
     In particular, we have
     \begin{displaymath}
       \lt_{>_{(w,v)}}(\initial_{w+\varepsilon\cdot v}(g))=\lt_{>_{(w,v)}}(g),
     \end{displaymath}
     and hence $w+\varepsilon\cdot v \in C_{>_{(w,v)}}(I)$ by Lemma~\ref{lem:Cmembership} again.

     The final claim now follows from Proposition~\ref{prop:BasisOfInitialIdeal}:
     \begin{displaymath}
       \initial_{w+\varepsilon\cdot v}(I)
       \underset{\ref{prop:BasisOfInitialIdeal}}{\overset{\text{Prop.}}{=}}
       \langle \initial_{w+\varepsilon\cdot v}(g)\mid g\in G \rangle
       = \langle \initial_v(\initial_w(g))\mid g\in G \rangle
       \underset{\ref{prop:BasisOfInitialIdeal}}{\overset{\text{Prop.}}{=}}
       \initial_v(\initial_w(I)).
     \end{displaymath}
   \end{proof}

   With this easy method of constructing adjacent orderings, we are now
   able to write an algorithm for flipping initially reduced standard
   bases.

   \begin{algorithm}[Flip]\label{alg:flip} \ \vspace*{-3ex}
     \begin{figure}[h]
       \centering
       \begin{tikzpicture}
         \fill[color=blue!20] (0,0) -- (4,0) -- (5.5,-1.5) -- (4,-3) -- (0,-3) -- cycle;
         \fill[color=blue!20] (4,0) -- (5.5,1.5) -- (9.5,1.5) -- (9.5,-1.5) -- (5.5,-1.5) -- cycle;
         \draw (0,0) -- (4,0) -- (5.5,-1.5) -- (4,-3) -- (0,-3);
         \draw (4,0) -- (5.5,1.5) -- (9.5,1.5);
         \draw (5.5,-1.5) -- (9.5,-1.5);
         \node[anchor=south west] (C1) at (0,-3) {$C_>(I)$};
         \node[anchor=south west, yshift=0cm] at (C1.north west) {$G$ standard basis};
         \node[anchor=north east] (C2) at (9.5,1.5) {$C_{>'}(I)$};
         \node[anchor=north east, red] at (C2.south east) {$G'$ standard basis};
         \fill (4.5,-0.5) circle (2pt);
         \node[anchor=east] at (4.5,-0.5) {$w$};
         \draw[->] (4.5,-0.5) -- (5,0);
         \node[anchor=south west] at (5,0) {$v$};
         \node[anchor=north east, xshift=-0.25cm, yshift=-0.25cm] (H1) at (4.5,-0.5) {$H$ standard basis};
         \draw[->, shorten >= 3pt, shorten <= -3pt] (H1.north east) -- (4.5,-0.5);
       \end{tikzpicture}
       \caption{flip of standard bases}
       \label{fig:flipOfStandardBases}
     \end{figure}
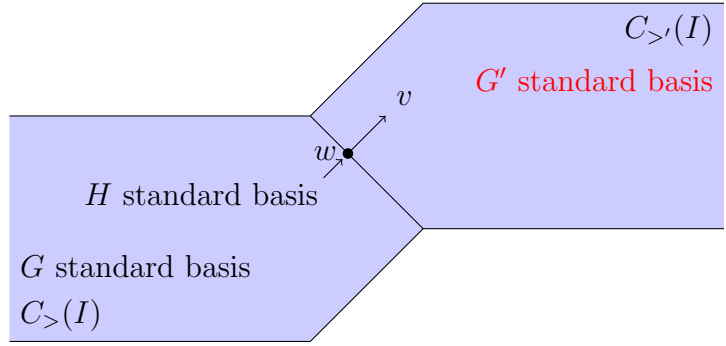
     \begin{algorithmic}[1]
       \REQUIRE{$(G,H,v,>)$, where
         \begin{itemize}[leftmargin=*, itemsep=0pt]
         \item $>$ a weighted $t$-local monomial ordering on $\Mon(t,{\ux})$ with weight vector in $\RR_{<0}\times\RR^n$,
         \item $G=\{g_1,\ldots,g_k\}\subseteq I$ an initially reduced standard basis of $I$ w.r.t.~$>$,
         \item $H=\{h_1,\ldots,h_k\}$ with $h_i=\initial_w(g_i)$ for
           some  relative interior point $w\in C_>(I)$ on a lower
           facet $\tau\leq C_>(I)$, $\tau\nsubseteq \{0\}\times\RR^n$
           and $w_0<0$.
         \item $v\in \RR\times \RR^n$ an outer normal vector of the facet $\tau$.
         \end{itemize}}
       \ENSURE{$(G',>')$, where $>'$ is an adjacent $t$-local monomial ordering with
         \begin{displaymath}
           \tau = C_>(I)\cap C_{>'}(I) \quad \text{and} \quad C_>(I)\neq C_{>'}(I),
         \end{displaymath}
         and $G'\subseteq I$ is an initially reduced standard basis w.r.t.~$>'$.}
       \STATE Compute a standard basis $H'$ of $\langle H\rangle=\initial_w(I)$ w.r.t.~$>_{(w,v)}$.
       \STATE Set $G':= \Lift(H',>_{(w,v)},H,G,>)$.
       \RETURN{$(G',>_{(w,v)})$}
     \end{algorithmic}
   \end{algorithm}
   \begin{proof}
     By our Lifting Algorithm~\ref{alg:lift}, $G'$ is an initially
     reduced standard basis of $I$ w.r.t.~$>_{(w,v)}$. The remaining
     conditions follow from
     Proposition~\ref{prop:PertubationOfInitialIdeal}.
   \end{proof}

   \begin{example}
     Consider the ideal
     \begin{displaymath}
       I:=\langle 2-t, xy^2-t^2y^3, x^2-t^3y^2 \rangle \unlhd \ZZ\llbracket t \rrbracket [x,y]
     \end{displaymath}
     and the weighted monomial ordering $>=>_u$ on $\Mon(t,x,y)$ with
     weight vector $u:=(-1,1,1)\in\RR_{<0}\times\RR^2$ and $t$-local
     lexicographical ordering such that $x>y>1>t$ as tiebreaker. An
     initially reduced standard basis of $I$ is then given by
     \begin{displaymath}
       G:=\{ 2-t, xy^2-t^2y^3, x^2-t^3y^2, t^3y^4\}.
     \end{displaymath}
     The maximal Gr\"obner cone $C_>(I)\subseteq\RR_{\leq 0}\times\RR^2$ is determined by the inequalities
     \begin{align*}
       (w_t,w_x,w_y)\in C_>(I) \quad &\Longleftrightarrow \quad \begin{cases} w_x+2w_y\geq 2w_t+3w_y \\ 2w_x\geq 3w_t+2w_y \end{cases} \\
       &\Longleftrightarrow \quad \begin{cases} w_x\geq 2w_t+w_y \\ 2w_x\geq 3w_t+2w_y \end{cases}
     \end{align*}
     It is easy to see how $w:=(-4,1,7)$ is contained in $C_>(I)$. In
     fact, it lies on its boundary since $2w_x=3w_t+2w_y=2$. Then
     $v:=(3,5,1)\in\RR^3$ is an outer normal vector, as even for small
     $\varepsilon>0$
     \begin{displaymath}
       \underbrace{2(w_x+\varepsilon \cdot v_x)}_{2+10\varepsilon}\ngeq \underbrace{3(w_t+\varepsilon \cdot v_t)}_{-12+9\varepsilon}+\underbrace{2(w_y+\varepsilon \cdot v_y)}_{14+2\varepsilon}.
     \end{displaymath}
     An initially reduced standard basis of $\initial_w(I)$ is then given by
     \begin{displaymath}
       H:=\{\initial_w(g)\mid g\in G\} = \{2, xy^2, x^2-t^3y^2, t^3y^4\},
     \end{displaymath}
     and computing a standard basis of $\initial_w(I)$ w.r.t.~the ordering $>_{(w,v)}$ yields
     \begin{displaymath}
       H':=\{2,xy^2,t^3y^2-x^2,x^3\},
     \end{displaymath}
     which can then be lifted to a standard basis of $I$ w.r.t.~the same ordering $>_{(w,v)}$ that is adjacent to $>$
     \begin{displaymath}
       G'=\{2-t,xy^2-t^2y^3,t^3y^2-x^2,x^3-t^5y^3\}.
     \end{displaymath}
   \end{example}

   The Gr\"obner fan algorithm is a so-called fan traversal algorithm. We
   start with computing a starting cone and repeatedly use
   Algorithm~\ref{alg:flip} to compute adjacent cones until we obtain
   the whole
   fan. The whole process is commonly illustrated on a bipartite graph as
   shown in Figure~\ref{fig:groebnerFanBipartiteGraph}. This bipartite
   graph also satisfies the so-called reverse search property, which can
   be used for further optimisation. See Chapter 3.2 in \cite{Jensen07}
   for more information about the reverse search property of Gr\"obner
   fans.

   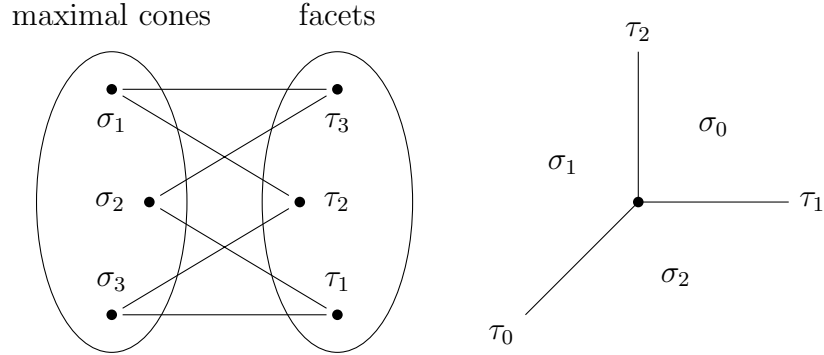
\begin{figure}[h]
     \centering
     \begin{tikzpicture}
       \node (s1) at (0,1.5) {};
       \node (s2) at (0.5,0) {};
       \node (s3) at (0,-1.5) {};
       \node (t3) at (3,1.5) {};
       \node (t2) at (2.5,0) {};
       \node (t1) at (3,-1.5) {};
       \fill (s1) circle (2pt);
       \fill (s2) circle (2pt);
       \fill (s3) circle (2pt);
       \fill (t3) circle (2pt);
       \fill (t2) circle (2pt);
       \fill (t1) circle (2pt);
       \node[anchor=north,yshift=-5pt] at (s1) {$\sigma_1$};
       \node[anchor=east,xshift=-5pt] at (s2) {$\sigma_2$};
       \node[anchor=south,yshift=5pt] at (s3) {$\sigma_3$};
       \node[anchor=north,yshift=-5pt] at (t3) {$\tau_3$};
       \node[anchor=west,xshift=5pt] at (t2) {$\tau_2$};
       \node[anchor=south,yshift=5pt] at (t1) {$\tau_1$};
       \draw (s1) -- (t2)
       (s1) -- (t3)
       (s2) -- (t1)
       (s2) -- (t3)
       (s3) -- (t1)
       (s3) -- (t2);
       \draw (0,0) ellipse (1cm and 2cm);
       \draw (3,0) ellipse (1cm and 2cm);
       \node at (0,2.5) {maximal cones};
       \node at (3,2.5) {facets};

       \draw (7,0) -- (9,0)
       (7,0) -- (7,2)
       (7,0) -- (5.5,-1.5);
       \fill (7,0) circle (2pt);
       \node at (8,1) {$\sigma_0$};
       \node at (6,0.5) {$\sigma_1$};
       \node at (7.5,-1) {$\sigma_2$};
       \node[anchor=west] at (9,0) {$\tau_1$};
       \node[anchor=south] at (7,2) {$\tau_2$};
       \node[anchor=north east] at (5.5,-1.5) {$\tau_0$};
     \end{tikzpicture}%
     \caption{The bipartite graph of a Gr\"obner fan $\Sigma(\langle x+y+z\rangle)$}
     \label{fig:groebnerFanBipartiteGraph}
   \end{figure}

   Note that since the Gr\"obner fan spans the whole weight space
   $\RR_{\leq 0}\times\RR^n$, each lower facet is contained in exactly two
   maximal cones. That means, traversing a facet $\tau \leq C_>(I)$ of a
   Gr\"obner cone $C_>(I)$ can be omitted if $\tau$ is contained
   in a maximal Gr\"obner cone that was already computed.

   \begin{algorithm}[Gr\"obner fan]\label{alg:groebnerFan} \ \vspace*{-3ex}
     \begin{algorithmic}[1]
       \REQUIRE{$F\subseteq I\unlhd\Rtx$ an ${\ux}$-homogeneous generating set.}
       \ENSURE{$\Delta$, the set of maximal cones in the Gr\"obner fan $\Sigma(I)$ of $I$.}
       \STATE Pick a random weight $u\in\RR_{<0}\times\RR^n$ and a $t$-local monomial ordering $>$.
       \STATE Compute an initially reduced standard basis $G$ of $I$ w.r.t.~$>_u$.
       \STATE Construct the maximal Gr\"obner cone
       $C_{>_u}(I)=C(\lt_{>_u}(G),G,>_u)$ using Algorithm~\ref{alg:groebnerConeNoWeight}.
       \STATE Initialise the Gr\"obner fan $\Sigma:=\{C_{>_u}(I)\}$.
       \STATE Initialise a working list $L:=\{(G,>_u,C_{>_u}(I))\}$.
       \WHILE{$L\neq \emptyset$}
       \STATE Pick $(G,>_u,C_{>_u}(I))\in L$.
       \FOR{all facets $\tau\leq C_{>_u}(I)$, $\tau\nsubseteq \{0\}\times\RR^n$}
       \STATE Compute a relative interior point $w\in\tau$.
       \IF{$w\notin C_{>'}(I)$ for all $C_{>'}(I)\in\Sigma\setminus \{C_{>_u}(I)\}$}
       \STATE Compute an outer normal vector $v$ of $\tau$.
       \STATE Set $H:=\{ \initial_w(g)\mid g\in G \}$.
       \STATE Compute $(G',>'):=\Flip(G,H,v,>_u)$ using Algorithm~\ref{alg:flip}.
       \STATE Construct the adjacent  cone
       $C_{>'}(I)=C(\lt_{>'}(G'),G',>')$.
       \STATE Compute a relative interior point $u'\in C_{>'}(I)$, so that $G'$ is a standard basis w.r.t.~$>_{u'}$ and $C_{>'}(I)=C_{>_{u'}}(I)$.
       \STATE Set $\Sigma:=\Sigma\cup\{C_{>_u'}(I)\}$.
       \STATE Set $L:=L\cup\{ (G',>_{u'},C_{>_{u'}}(I)\}$.
       \ENDIF
       \ENDFOR
       \STATE Set $L:=L\setminus\{(G,>_u,C_{>_u}(I))\}$.
       \ENDWHILE
       \RETURN{$\Delta$}
     \end{algorithmic}
   \end{algorithm}

   \begin{example}\label{ex:groebnerFan}
     For an easy but clear example, consider the ideal
     \begin{displaymath}
       I:=\langle x+z,y+z \rangle \unlhd \ZZ\llbracket t\rrbracket[x,y,z].
     \end{displaymath}
     Because it is weighted homogeneous w.r.t.~$(-1,0,0,0)\in\RR_{<0}\times\RR^3$ and
     $(0,1,1,1)\in\{0\}\times\RR^3$, its Gr\"obner fan is closed under
     translation by $(-1,0,0,0)$ and invariant under translation by
     $(0,1,1,1)$. We therefore, concentrate on weight vectors on the
     hyperplane $\{0\}\times \RR^2 \times \{0\}$, since any other weight
     vector in the closed lower halfspace can be generated out of them
     via the translations.

     Looking only at potential leading terms of the generators, one might
     be led to believe that the Gr\"obner fan $\Sigma(I)$ restricted to
     $\{0\}\times \RR^2 \times \{0\}$ is of the form as shown in
     Figure~\ref{fig:ex5.8a}
     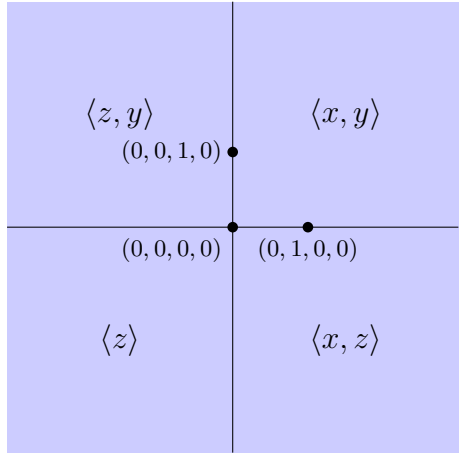
\begin{figure}[h]
       \centering
       \begin{tikzpicture}
         \fill[color=blue!20] (-3,-3) rectangle (3,3);
         \draw
         (0,0) -- (3,0)
         (0,0) -- (0,3)
         (0,0) -- (-3,0)
         (0,0) -- (0,-3);
         \fill (0,0) circle (2pt);
         \node[anchor=north east, font=\scriptsize] at (0,0) {$(0,0,0,0)$};
         \fill (1,0) circle (2pt);
         \node[anchor=north, font=\scriptsize] at (1,0) {$(0,1,0,0)$};
         \fill (0,1) circle (2pt);
         \node[anchor=east, font=\scriptsize] at (0,1) {$(0,0,1,0)$};
         \node at (1.5,1.5) {$\langle x,y\rangle$};
         \node at (-1.5,1.5) {$\langle z,y\rangle$};
         \node at (1.5,-1.5) {$\langle x,z\rangle$};
         \node at (-1.5,-1.5) {$\langle z\rangle$};
       \end{tikzpicture}
       \caption{The Gr\"obner fan $\Sigma(I)$ restricted to $\{0\}\times \RR^2 \times \{0\}$}
       \label{fig:ex5.8a}
     \end{figure}

     Let us use our algorithm to see why this is not the case.
     We start with a random weight vector $u$, say $u=(0,1,1,0)$, and a
     random $t$-local monomial ordering $>$ to be used as
     tiebreaker. Then $\{\underline{x}+z,\underline{y}+z\}$ already is an
     initially reduced standard basis w.r.t.~$>_u$, leading
     terms underlined, so that by Lemma~\ref{lem:Cmembership}
     \begin{displaymath}
       {w'}\in C_u(I) \quad \Longleftrightarrow \quad \begin{cases} \deg_{w'}(x) \geq \deg_{w'}(z)=0, \\ \deg_{w'}(y)\geq \deg_{w'}(z)=0. \end{cases}
     \end{displaymath}
     Hence, $C_u(I)$ is the upper left quadrant of the image above, with two facets available for the traversal.
     \begin{figure}[h]
       \centering
       \begin{tikzpicture}
         \fill[color=blue!20] (0,0) rectangle (3,3);
         \draw[red]
         (0,0) -- (3,0)
         (0,0) -- (0,3);
         \draw[draw opacity=0]
         (0,0) -- (-3,0);
         \fill (0,0) circle (2pt);
         \fill (1,0) circle (2pt);
         \fill (0,1) circle (2pt);
         \node at (1.5,1.5) {$\langle x,y\rangle$};
       \end{tikzpicture}
       \caption{The first cone in the restricted Gr\"obner fan}
       \label{fig:ex5.8b}
     \end{figure}
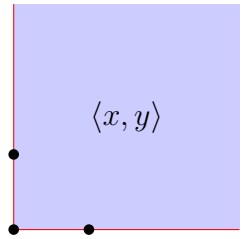
     Picking $\tau$ to be the upper ray of $C_u(I)$, $w=(0,0,1,0)$ a
     relative interior point inside  it and $v=(0,-1,0,0)$ an outer
     normal vector on it, we see that $\initial_w(x+z)=\underline{z}+x$
     and $\initial_w(y+z)=\underline{y}$ already form an initially
     reduced standard basis of $\initial_w(I)$ w.r.t.~$>_{(w,v)}$. Therefore, this standard basis of $\initial_w(I)$ lifts
     again to the very same standard basis
     $\{\underline{z}+x,\underline{y}+z\}$ of $I$ for the adjacent
     ordering.

     However that standard basis is not initially reduced anymore, and a
     quick calculation yields the initially reduced standard basis
     $\{\underline{z}+x, \underline{y}-x\}$ and hence
     \begin{displaymath}
       w'\in C_{>_{(w,v)}}(I) \quad \Longleftrightarrow \quad \begin{cases} 0=\deg_{w'}(z) \geq \deg_{w'}(x), \\ \deg_{w'}(y)\geq \deg_{w'}(x). \end{cases}
     \end{displaymath}
     \begin{figure}[h]
       \centering
       \begin{tikzpicture}
         \fill[color=blue!20] (0,0) rectangle (3,3);
         \fill[color=blue!20] (-3,-3) -- (0,0) -- (0,3) -- (-3,3) -- cycle;
         \draw[red]
         (0,0) -- (3,0)
         (0,0) -- (-3,-3);
         \draw
         (0,0) -- (0,3);
         \fill (0,0) circle (2pt);
         \fill (1,0) circle (2pt);
         \fill (0,1) circle (2pt);
         \node at (1.5,1.5) {$\langle x,y\rangle$};
         \node at (-1.5,0.75) {$\langle z,y\rangle$};
       \end{tikzpicture}
       \caption{The first two cones in the restricted Gr\"obner fan}
       \label{fig:ex5.8c}
     \end{figure}
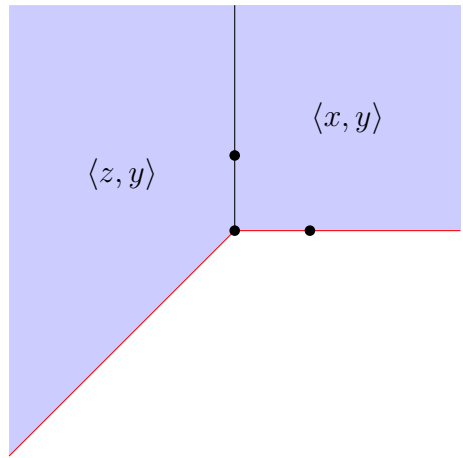

     Let $\tau$ be the lower ray of our new Gr\"obner
     cone (see Figure~\ref{fig:ex5.8c}), $w=(0,-1,-1,0)$ a relative interior point and $v=(0,1,-1,0)$
     an outer normal vector. We see that $\initial(z+x)=\underline{z}$
     and $\initial_w(y-x)=-\underline{x}+y$ already form an initially
     reduced standard basis of $\initial_w(I)$ w.r.t.~$>_{(w,v)}$, which is why it will lift again to the same standard
     basis $\{\underline{z}+x,-\underline{x}+y\}$ of $I$ for the adjacent
     ordering.

     As before, this standard basis is not initially reduced anymore, and
     a quick calculation yields the initially reduced standard basis
     $\{\underline{z}+y,-\underline{x}+y\}$, which means
     \begin{displaymath}
       w'\in C_{>_{(w,v)}}(I) \quad \Longleftrightarrow \quad \begin{cases} 0=\deg_{w'}(z) \geq \deg_{w'}(y), \\ \deg_{w'}(x)\geq \deg_{w'}(y). \end{cases}
     \end{displaymath}
     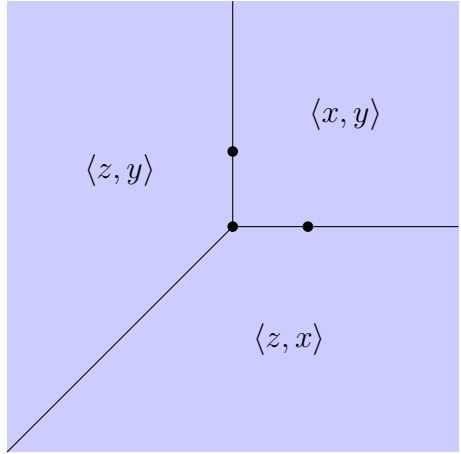
\begin{figure}[h]
       \centering
       \begin{tikzpicture}
         \fill[color=blue!20] (-3,-3) rectangle (3,3);
         \draw
         (0,0) -- (3,0)
         (0,0) -- (-3,-3)
         (0,0) -- (0,3);
         \fill (0,0) circle (2pt);
         \fill (1,0) circle (2pt);
         \fill (0,1) circle (2pt);
         \node at (1.5,1.5) {$\langle x,y\rangle$};
         \node at (-1.5,0.75) {$\langle z,y\rangle$};
         \node at (0.75,-1.5) {$\langle z,x\rangle$};
       \end{tikzpicture}
       \caption{The Gr\"obner fan $\Sigma(I)$ restricted to $\{0\}\times \RR^2 \times \{0\}$}
       \label{fig:ex5.8d}
     \end{figure}
     Figure~\ref{fig:ex5.8d} then shows how the Gr\"obner fan $\Sigma(I)$ actually looks
     like. The misconception at the beginning of the example was due to
     the oversight that $\initial_w(x+z)=\initial_w(y+z)=z$ do not
     generate $\initial_w(I)$, because $\{x+\underline{z},
     y+\underline{z}\}$ is no initially reduced standard basis for
     $>_w$.
   \end{example}

   \begin{remark}
     As we have already remarked, our main interest lies in the
     computation of tropical varieties over the $p$-adic numbers (see
     e.g.~Section~\ref{sec:initiallyreduced}). For
     this we assume that $R=\ZZ$ and $I$ contains the polynomial $p-t$
     for some prime number $p$. At the beginning of this section we
     have mentioned that the traversal algorithm has the advantage
     that a standard basis of the ideal has to computed from scratch
     only once. All intermediate steps comprise of computing standard
     bases of initial ideals (see Algorithm~\ref{alg:flip}), which are
     much simpler since they are 
     weighted homogeneous, and lifting those via computing standard
     representations (see Algorithm~\ref{alg:witness}). A priori, all
     these computations 
     are computations over the integers as base ring, which is
     more expensive than computing over base fields. However, all the
     initial ideals involved contain the 
     prime number $p=\initial_w(p-t)$, and hence most computations can
     actually be done over the finite field $\ZZ/\langle
     p\rangle$. This reduces the overall cost drastically.
   \end{remark}


\providecommand{\bysame}{\leavevmode\hbox to3em{\hrulefill}\thinspace}

\end{document}